\newtheorem{thm}{Theorem}
\newtheorem{prop}[thm]{Proposition}
\newtheorem{lem}[thm]{Lemma}
\theoremstyle{remark}
\newtheorem{rem}[thm]{Remark}
\theoremstyle{definition}
\newtheorem{defi}[thm]{Definition}
\newcommand{\C}{\mathbb C}
\newcommand{\R}{\mathbb R}
\newcommand{\Z}{\mathbb Z}
\newcommand{\HH}{\mathbb H}
\newcommand{\Id}{\mathrm{Id}}
\newcommand{\End}{\mathrm{End}}
\newcommand{\T}{\mathcal T}
\newcommand{\K}{\mathcal K}
\newcommand{\M}{\mathcal M}
\newcommand{\h}{\mathcal H}
\newcommand{\la}{\lambda}
\newcommand{\g}{\gamma}
\newcommand{\s}{\sigma}
\renewcommand{\phi}{\varphi}
\title[Factorization Rules in Quantum Teichm\"uller Theory]
{Factorization Rules in Quantum Teichm\"uller Theory}
\author{Julien Roger}
\address{Department of Mathematics, Rutgers University, New Brunswick NJ~08854}
\email{juroger@math.rutgers.edu}
\thanks{This research was partially supported by the grant DMS-0604866 
from the National Science Foundation.}
\begin{document}

\begin{abstract}
For a punctured surface $S$, a point of its Teichm\"uller space $\mathcal{T}(S)$ determines an irreducible representation of its quantization $\mathcal{T}^q(S)$. We analyze the behavior of these representations as one goes to infinity in $\mathcal{T}(S)$, or in the moduli space $\mathcal{M}(S)$ of the surface. The main result of this paper states that an irreducible representation of $\mathcal{T}^q(S)$ limits to a direct sum of representations of $\mathcal{T}^q(S_\gamma)$, where $S_\gamma$ is obtained from $S$ by pinching a multicurve $\gamma$ to a set of nodes. The result is analogous to the factorization rule found in conformal field theory.
\end{abstract}

\maketitle

Let $S$ be an oriented surface of genus $g$ obtained from a closed compact surface $\overline{S}$ by removing $s$ punctures $v_1$, \ldots, $v_s$. The \emph{Teichm\"uller space} $\T(S)$ of $S$ is the space of isotopy classes of complete hyperbolic metrics on $S$ with finite area. It comes equipped with a natural K\"ahler metric, called the Weil--Petersson metric which is invariant under the action of the mapping class group $MCG(S)$ onto $\T(S)$. A quantization of the Teichm\"uller space was successfully described by L. Chekhov and V. V. Fock in \cite{CheFo}, and, in a slightly different setting, by R. Kashaev in \cite{Ka1}. In the work of Chekhov and Fock, the main geometric ingredient is the notion of shear coordinates on the enhanced Teichm\"uller space $\widetilde{\T}(S)$ which were introduced by W. Thurston \cite{Thurston}. On the algebraic side, they make use of the quantum dilogarithm as described by L. Faddeev and Kashaev \cite{FaKa}.

In the physics literature, the interest for the quantization of Teichm\"uller theory can be traced back  to the work of E. Verlinde and H. Verlinde \cite{Ver1,Ver2} among others. In particular, H. Verlinde conjectured in \cite{Ver2} that quantum Teichm\"uller theory should give rise to a family of representations of the mapping class groups which could be identified with a modular functor obtained from Liouville conformal field theory. The existence of such a modular functor associated to the quantum Teichm\"uller space was conjectured also by Fock \cite{Fo} and was studied further by J. Teschner \cite{Tesch1}. A generalized version of this conjecture was made by Fock and A. B. Goncharov \cite{FoGon} in their study of (quantum) higher Teichm\"uller theory.

The goal of this paper is to investigate a similar question in the context of the exponential version of the quantum Teichm\"uller space studied by H. Bai, F. Bonahon and X. Liu \cite{Liu1,Bai,BoLiu}. Given a parameter $q\in\C^*$, The quantum Teichm\"uller space $\T^q(S)$ is a non-commutative algebra, deformation of the algebra of functions on $\T(S)$. For $q$ a root of unity, Bonahon and Liu \cite{BoLiu} describe a complete classification of the finite dimensional irreducible representations $\rho\colon\T^q(S)\to\End(V)$. In particular, in the case when $q$ is a primitive $N$--th root of unity with $N$ odd and fixing weights $p_1,\ldots,p_s\in\{0,\ldots,N-1\}$ labelling the punctures $v_1$, \ldots, $v_s$ of $S$, one can associate to every hyperbolic metric $m\in\T(S)$ a unique irreducible representation $\rho_m\colon\T^q(S)\to\End(V)$. Using a construction similar to the one described in \cite{BaBoLiu}, one can then construct a projective vector bundle $\mathcal{K}^q=\mathcal{\K}^q(p_1,\ldots,p_s)$ over $\T(S)$ with fiber $\mathbb{P}V$, where the fiber at $m\in\T(S)$ is endowed with the action of the irreducible representation $\rho_m$ of $\T^q(S)$. This construction behaves well under the action of the mapping class group $MCG(S)$ and we obtain a projective vector bundle $\widetilde{\mathcal{K}}^q$ over the moduli space $\mathcal{M}(S)=\T(S)/MCG(S)$. 

In the spirit of conformal field theory, one can then ask if this bundle extends to  the Deligne--Mumford compactification $\overline{\mathcal{M}(S)}$ of the moduli space. To study this question, we analyze how the representation $\rho_m$ of $\T^q(S)$ breaks down when the metric $m$ approaches a point of $\overline{\mathcal{M}(S)}\smallsetminus\mathcal{M}(S)$, that is, when the lengths of a finite number of geodesics of $S$ tend to 0 for this metric. The result we obtain can be interpreted as a \emph{factorization rule} for this theory.

More precisely, let $\la$ be an \emph{ideal triangulation} of $S$, that is, a triangulation of $S$ with vertices at the punctures. Following the construction of Chekhov and Fock in \cite{CheFo}, Bonahon and Liu \cite{Liu1,BoLiu} associate to $S$, the triangulation $\la$ and a parameter $q\in\C^*$, an algebra $\T^q_\la(S)$ called the \emph{Chekhov--Fock algebra}. It is the skew-commutative algebra over $\C$ with generators $X^{\pm1}_1,\ldots,X^{\pm1}_n$ associated to the edges of $\la$ and relations $X_i X_j=q^{2\sigma_{ij}}X_j X_i$, where the $\sigma_{ij}\in\left\{-2,-1,0,1,2\right\}$ are the coefficients of the Weil--Petersson Poisson structure on the enhanced Teichm\"uller space $\widetilde{\T}(S)$, parametrized using Thurston's shear coordinates.

Approaching a point in the boundary of $\overline{\mathcal{M}(S)}$ corresponds to shrinking a finite number of non-intersecting geodesics of $S$ to points. Hence we give ourselves a finite union of non-intersecting, non homotopic, essential simple closed curves $\g=\cup_i\g_i\subset S$ and we consider the surface $S_\g=S\smallsetminus\g$. One should think of $S_\g$ as being obtained from $S$ by pinching the multicurve $\g$ to a set of nodes and removing them. It is a possibly disconnected surface with two new punctures for each curve removed. The ideal triangulation $\la$ on $S$ induces an ideal triangulation $\la_\g$ on $S_\g$ whose edges are given by taking homotopy classes of edges of $\la\cap S_\g$.

An essential step is to relate the quantum Teichm\"uller spaces of $S$ and $S_\g$.
\begin{prop}
For every ideal triangulation $\la$ and every multicurve $\g$ on $S$, there exists an algebra homomorphism
\[\Theta^q_{\g,\la}:\T^q_{\la_\g}(S_\g)\longrightarrow\T^q_\la(S)\]
described explicitly by sending each generator of $\T^q_{\la_\g}(S_\g)$ to certain monomials in $\T^q_\la(S)$.
\end{prop}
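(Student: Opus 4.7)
The plan is to construct $\Theta^q_{\g,\la}$ explicitly on each generator of $\T^q_{\la_\g}(S_\g)$ and then verify that the skew-commutation relations defining $\T^q_{\la_\g}(S_\g)$ are preserved; once this is done, the map extends uniquely to a homomorphism on the full algebra (invertibility of the generators comes for free since each image will be a Laurent monomial). First, I would put $\g$ in minimal position with respect to $\la$, so that $\g$ intersects the $1$-skeleton of $\la$ transversely at a finite set of points. Each edge of $\la_\g$ then arises in one of two ways: as an edge of $\la$ disjoint from $\g$, or as the homotopy class in $S_\g$ of a sub-arc of some edge $\tilde e$ of $\la$ lying between two consecutive intersections with $\g$ (with one or both endpoints at the new punctures created by pinching).

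For generators $Y_{e'}$ of the first type, I would simply set $\Theta^q_{\g,\la}(Y_{e'}) := X_{e'}$. For generators of the second type, I would define $\Theta^q_{\g,\la}(Y_{e'})$ as a monomial in the $X_e$, with a $q$-power prefactor fixing the order ambiguity, involving $X_{\tilde e}$ together with those generators $X_e$ whose edges sit on the ``wrong side'' of $\g$ relative to $e'$. The explicit exponents (in $\{-1,0,1\}$) are dictated by the classical shear-coordinate relation: when one approaches a pinched metric in $\T(S)$, the shear of $e'$ on $S_\g$ differs from the shear of $\tilde e$ on $S$ by a correction involving shears of edges on the opposite side of $\g$, and the monomial is the exponential version of this combination.

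The verification that $\Theta^q_{\g,\la}$ respects the relations reduces to a combinatorial identity: for any two generators $Y_i,Y_j$, writing $\Theta^q_{\g,\la}(Y_k)$ as $q^{\bullet}\prod_e X_e^{a_{k,e}}$, the product $\Theta^q_{\g,\la}(Y_i)\Theta^q_{\g,\la}(Y_j)$ picks up a commutation factor of $q^{2\sum_{e,f}a_{i,e}\,a_{j,f}\,\s^\la_{ef}}$ when compared to $\Theta^q_{\g,\la}(Y_j)\Theta^q_{\g,\la}(Y_i)$. The requirement is that this sum equals $\s^{\la_\g}_{ij}$. Geometrically, this says that the corner-counting formula for the Weil--Petersson Poisson bracket on $\widetilde{\T}(S_\g)$ at edges of $\la_\g$ is recovered from the corresponding formula on $\widetilde{\T}(S)$ at edges of $\la$, by summing contributions from every corner of every triangle of $\la$ that survives (as a corner of a triangle of $\la_\g$) after the pinching operation.

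The main obstacle is the case where $e'_i$ and $e'_j$ both abut the same new puncture produced by collapsing a component $\g_k$ of $\g$. In this situation, the monomials $\Theta^q_{\g,\la}(Y_i)$ and $\Theta^q_{\g,\la}(Y_j)$ share many common $X_e$ factors (those indexed by edges of $\la$ that cross $\g_k$), so the sum $\sum_{e,f}a_{i,e}\,a_{j,f}\,\s^\la_{ef}$ involves substantial cancellation among corners of $\la$ that \emph{disappear} under pinching, and only the corners that correspond to triangles of $\la_\g$ at the new puncture should survive to reproduce $\s^{\la_\g}_{ij}$. Checking this requires a local case analysis of how $\la$ triangulates an annular neighborhood of $\g_k$; the remainder of the argument is then straightforward combinatorial bookkeeping, and the choice of $q$-power prefactors must be made so that the answer is independent of the minimal-position representative of $\g$.
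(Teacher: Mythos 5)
Your overall strategy is the same as the paper's: send each generator of $\T^q_{\la_\g}(S_\g)$ to a $q$-ordered Laurent monomial whose exponents are dictated by the classical degeneration of shear coordinates, and reduce the homomorphism property to the identity $\s^{\la_\g}_{ij}=\sum_{s,t}a_{i,s}a_{j,t}\,\s^{\la}_{st}$ between Weil--Petersson coefficients. But two essential points are wrong or missing. First, the monomial itself is misidentified. An edge $\mu_i$ of $\la_\g$ is not a single sub-arc of one edge $\tilde e$ of $\la$ corrected by edges ``on the wrong side'' of $\g$ with exponents in $\{-1,0,1\}$: after pinching, sub-arcs of \emph{several distinct} edges of $\la$ become homotopic in $S_\g$ (they are merged by collapsing a maximal chain of bigons in the decomposition of $S_\g$ by $\la\smallsetminus\g$), and the correct image of $Y_i$ is the Weyl-ordered monomial $[X_1^{k_{i1}}\cdots X_n^{k_{in}}]$, where $k_{ij}\ge 0$ counts the segments of $\la_j$ occurring in the homotopy class of $\mu_i$; these exponents are nonnegative and can exceed $1$. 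The underlying classical fact is $y_i=\lim_{t\to0}\prod_j x_j(t)^{k_{ij}}$, which comes from the additivity of the shearing cocycle along the chain of bigons separating the two triangles adjacent to $\mu_i$ --- not from a signed difference of shears across $\g$. With your signed prescription the relation check would not close up.

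Second, the identity on Poisson coefficients is the actual mathematical content of the proposition, and you defer precisely that step to ``straightforward combinatorial bookkeeping'' plus an unspecified local case analysis near an annular neighborhood of each $\g_k$. The cancellation you flag is real and is exactly where a naive corner count becomes delicate; asserting that only the surviving corners contribute is a restatement of what must be proved. The paper avoids this case analysis entirely by a structural argument: $\s$ is identified with the intersection form on the $\eta$-anti-invariant part of $H_1(\widehat{S})$ for a branched double cover $p\colon\widehat{S}\to S$ built from the dual graph, $\widehat{S}_\g$ is identified with $p^{-1}(S_\g)\subset\widehat{S}$, and the inclusion $\widehat{\iota}\colon\widehat{S}_\g\hookrightarrow\widehat{S}$ satisfies $\widehat{\iota}_*(\widehat{f}_i)=\sum_s k_{is}\widehat{e}_s$, whence $\tau_{ij}=\widehat{f}_i\cdot\widehat{f}_j=\sum_{s,t}k_{is}k_{jt}\s_{st}$ by functoriality of the intersection pairing. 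Until you either carry out the corner-by-corner cancellation in full or substitute such an argument, the proof is incomplete; with the corrected monomials, the rest of your outline (Weyl ordering to fix the $q$-prefactors, invertibility of monomial images) does go through.
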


The existence of this homomorphism is the algebraic translation of the fact that the Weil--Petersson Poisson structure extends naturally to the completion $\overline{\T(S)}$ of the Teichm\"uller space for the Weil--Petersson metric (see H. Masur \cite{Mas1} and S. Wolpert \cite{Wol}). This completion is called the \emph{augmented Teichm\"uller space} and was introduced by W. Abikoff \cite{Abi1} and L. Bers \cite{Ber1}. As a set, it is the union of $\T(S)$ and of the $\T(S_\g)$ for every multicurve $\g$. The action of $MCG(S)$ on $\T(S)$ extends to $\overline{\T(S)}$ and the quotient $\overline{\T(S)}/MCG(S)$ can be identified topologically with $\overline{\mathcal{M}(S)}$. The key geometric ingredient is given by an extension of the shear parameters on $\T(S)$ to the strata $\T(S_\g)$ of this augmentation. 

When $q$ is a primitive $N$--th root of unity with $N$ odd, Bonahon and Liu associate an irreducible representation $\rho_m:\T^q_\la(S)\rightarrow\End(V)$ of the Chekhov--Fock algebra to each metric $m\in\T(S)$ and weights $p_1,\ldots,p_s\in\{0,\ldots,N-1\}$ labelling the punctures $v_1$, \ldots, $v_s$ of $S$. $\rho_m$ is defined uniquely up to isomorphism and varies continuously with  the metric $m$. We proceed with studying the behavior of $\rho_m$ when $m$ approaches a lower-dimensional stratum $\T(S_\g)$ in $\overline{\T(S)}$.

For simplicity, let us restrict attention in the introduction to the case when $\g$ consists of a single curve.

\begin{thm}
\label{thmintro1}
Let $m_t\in\T(S)$ be a continuous family of hyperbolic metrics such that, as $t\to 0$, $m_t$ converges to $m_\g\in\T(S_\g)$ in $\overline{\T(S)}$. Let $\rho_t\colon\T^{q}_{\la}(S)\to\End(V)$ be a continuous family of irreducible representations classified by $m_t$ and weights $p_1,\ldots,p_s\in\{0,\ldots,N-1\}$ labelling the punctures $v_1$, \ldots, $v_s$ of $S$.

Then, as $t\rightarrow 0$, the representation \[\rho_t\circ\Theta^q_{\g,\la}:\T^{q}_{\la_\g}(S_\g)\rightarrow \End(V)\]
approaches \[\bigoplus^{N-1}_{i=0}\rho^i_\g:\T^{q}_{\la_\g}(S_\g)\rightarrow\End(\bigoplus^{N-1}_{i=0}V_i)\]
 where, for each $i$, $\rho^i_\g\colon\T^{q}_{\la_\g}(S_\g)\to\End(V_i)$ is the irreducible representation classified by $m_\g$, the  weights $p_1,\ldots,p_s$ labelling the old punctures and the weight $i$ labelling the two new punctures $v'$ and $v''$ of $S_\g$.
\end{thm}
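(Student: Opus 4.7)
The strategy is to isolate, inside $\T^q_\la(S)$, the monomial $X_\g$ associated to $\g$ (the product of $X_i$ over edges of $\la$ crossing $\g$, weighted by multiplicity). By construction of $\Theta^q_{\g,\la}$, this $X_\g$ is the common image of the two puncture-weight monomials of $\T^q_{\la_\g}(S_\g)$ corresponding to $v'$ and $v''$. In particular, $X_\g$ commutes with every element of $\Theta^q_{\g,\la}\bigl(\T^q_{\la_\g}(S_\g)\bigr)$, and its $N$-th power $X_\g^N$ is central in $\T^q_\la(S)$, hence acts as a scalar $\mu_t\in\C^*$ under the irreducible representation $\rho_t$.

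Using the explicit extension of Thurston's shear coordinates to the augmented Teichm\"uller space $\overline{\T(S)}$, together with the fact that $X_\g$ encodes the exponentiated length of $\g$, one verifies that $\mu_t\to 1$ as $t\to 0$. Therefore the $N$ eigenvalues of $\rho_t(X_\g)$ converge to the $N$ distinct $N$-th roots of unity $\{q^{2i}\}_{i=0}^{N-1}$. Writing $V=\bigoplus_{i=0}^{N-1}V_i(t)$ for the associated eigenspace decomposition, each subspace $V_i(t)$ is preserved by $\rho_t\circ\Theta^q_{\g,\la}$, since $X_\g$ is central in the image.

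Next, one identifies the restriction of $\rho_t\circ\Theta^q_{\g,\la}$ to $V_i(t)$ with $\rho^i_\g$ in the limit. The puncture weights at the original punctures pull back under $\Theta^q_{\g,\la}$ to the corresponding puncture weights of $\T^q_\la(S)$, and the two new weights act asymptotically as $q^{2i}$ on $V_i(t)$, producing the puncture labels $p_1,\ldots,p_s,i,i$ of $\rho^i_\g$. For the non-puncture generators of $\T^q_{\la_\g}(S_\g)$, the monomials $\Theta^q_{\g,\la}(X_j)$ in $\T^q_\la(S)$ have shear coordinates on $m_t$ that converge to the shear coordinates of the corresponding edges of $\la_\g$ on $m_\g$. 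Bonahon--Liu's classification then identifies the limiting representation on $V_i$ with $\rho^i_\g$, and taking the direct sum yields the claimed convergence.

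The principal obstacle is the continuity analysis underlying this last step. Individually, the generators $X_j$ for edges of $\la$ crossing $\g$ have shear coordinates that diverge as $t\to 0$, so the operators $\rho_t(X_j)$ blow up. One must show that the specific monomial combinations produced by $\Theta^q_{\g,\la}$ cancel these divergences, yielding operators with well-defined limits on each $V_i$, and that the matrix coefficients of $\rho_t\circ\Theta^q_{\g,\la}$ between different $V_i(t)$ and $V_j(t)$ vanish in the limit. This demands a careful choice of continuously varying bases of $V_i(t)$ inside $V$, so that convergence of representations can be tracked at the level of matrix entries, and is where the bulk of the technical work lies.
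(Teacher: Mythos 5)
Your proposal follows essentially the same route as the paper's proof: decompose $V$ into eigenspaces of $\rho_t(X_\g)$, observe that these are invariant because $X_\g$ is the common image under $\Theta^q_{\g,\la}$ of the central elements $P'$, $P''$ of $\T^q_{\la_\g}(S_\g)$, and identify each summand in the limit through its central character, using the convergence of the shear monomials (Proposition~\ref{coordlimit}) and of the exponential graph length (Lemma~\ref{limitlength}). There is, however, one genuine gap: you assert that ``the $N$ eigenvalues of $\rho_t(X_\g)$ converge to the $N$ distinct $N$-th roots of unity,'' but nothing you have said guarantees that $\rho_t(X_\g)$ actually has all $N$ admissible eigenvalues, nor that its eigenspaces all have the same dimension; a priori the spectrum could be a proper subset of $\{c(t)q^{i}\}_{i=0}^{N-1}$. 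Both facts are needed --- the first so that the direct sum ranges over every weight $i\in\{0,\ldots,N-1\}$ as the theorem claims, the second so that $\dim V_i=N^{3g+s-4}$ and each summand is forced to be irreducible by the Bonahon--Liu dimension count (Theorem~\ref{BoLiu1} and Remark~\ref{dimension}). The paper fills this with Lemma~\ref{skewcommutant}: there exists $\Delta\in\T^q_\la(S)$ with $X_\g\Delta=q^4\Delta X_\g$, so conjugation by $\rho_t(\Delta)$ permutes the eigenspaces cyclically ($q^4$ being again a primitive $N$-th root of unity since $N$ is odd). Producing $\Delta$ is a small but nontrivial topological argument via the homological description of the commutation coefficients, with separate treatment of the separating and non-separating cases for $\g$. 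Your proposal is missing this idea entirely.

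On the other hand, the ``principal obstacle'' you flag at the end is largely illusory in this setting. A continuous family classified by $m_t$ has the form $\rho_t(X_j)=\sqrt[N]{x_j(t)}\,A_j$ with the $A_j$ fixed, so $\rho_t\circ\Theta^q_{\g,\la}(Y_j)$ is a fixed matrix multiplied by the scalar $\sqrt[N]{x_1^{k_{j1}}(t)\cdots x_n^{k_{jn}}(t)}$, which converges to $\sqrt[N]{y_j}$ by Proposition~\ref{coordlimit}; the divergences of the individual $\rho_t(X_j)$ cancel automatically, the eigenspace decomposition of $\rho_t(X_\g)$ is independent of $t$, and there are no off-diagonal matrix coefficients to control because the $V_i$ are exactly invariant for every $t$, not merely asymptotically. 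No careful choice of varying bases is required. (Your parametrization of the eigenvalues by $q^{2i}$ rather than $q^{i}$ is harmless, as $i\mapsto 2i$ is a bijection modulo the odd integer $N$; it only relabels the weights.)
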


The next step is to show that this decomposition is well-behaved under changes of triangulations. More precisely, for any two triangulations $\la$ and $\la'$ of $S$, Chekhov and Fock introduce \emph{quantum coordinate change isomorphisms} $\Phi^q_{\la\la'}\colon\widehat{\T}^q_{\la'}(S)\to\widehat{\T}^q_\la(S)$ between the fraction division algebras associated to the Chekhov--Fock algebras $\T^q_{\la}(S)$ and $\T^q_{\la'}(S)$. The \emph{quantum Teichm\"uller space} $\T^q(S)$ of $S$ is then defined to be the union of the $\widehat{\T}^q_\la(S)$ for every triangulation $\la$, modulo the relation which identifies $X'\in\widehat{\T}^q_{\la'}(S)$ to $\Phi^q_{\la\la'}(X')\in\widehat{\T}^q_\la(S)$.

The coordinate change isomorphisms are only defined between fraction algebras. For certain representations $\rho_\la$ of $\T^q_\la(S)$ the composition with $\Phi^q_{\la,\la'}$ still makes sense and we obtain a representation $\rho_\la\circ\Phi^q_{\la,\la'}\colon\T^q_{\la'}(S)\to\End(V)$. A representation $\rho$ of $\T^q(S)$ is then a family of representations $\{\rho_\la\}_\la$ of $\T^q_\la(S)$ such that $\rho_\la\circ\Phi^q_{\la\la'}=\rho_{\la'}$ for every triangulations $\la$ and $\la'$. In particular, given $m\in\T(S)$ and weights $p_1$,\ldots, $p_s$ labelling the punctures of $S$, one obtains a representation $\rho_m=\{\rho_{m,\la}\}_\la$ where $\rho_{m,\la}\colon\T^q_\la(S)\to\End(V)$ is the irreducible representation of $\T^q_\la(S)$ classified by these data.
  
\begin{thm}
\label{thmintro2}
Let $\rho_t=\{\rho_{t,\la}\}_\la$ be a continuous family of irreducible representations of $\T^q(S)$ classified by weights $p_1$, \ldots, $p_s$ and a continuous family $m_t\in\T(S)$ such that $m_t$ approaches $m_\g\in\T(S_\g)$ as $t\rightarrow 0$. For each triangulation $\la$ of $S$, we let the limit
\[
\lim_{t\to0}\rho_{t,\la}\circ\Theta^q_{\g,\la}=\bigoplus_i\rho^i_{\g,\la}
\]
be given as in Theorem~\ref{thmintro1}.

Then, for any triangulations $\la$ and $\la'$ and weight $i$, we have
\[\rho^i_{\g,\la'}=\rho^i_{\g,\la}\circ\Phi^q_{\la_\g\la'_\g}.\]

Hence the family of representations $\{\rho^i_{\g,\la}\}_\la$ determines an irreducible representation $\rho^i_\g\colon\T^q(S_\g)\to\End(V)$, which is classified by $m_\g$, the weights $p_1$, \ldots, $p_s$ labelling the old punctures, and the weight $i$ labelling the new punctures $v'$ and $v''$ of $S_\g$.
\end{thm}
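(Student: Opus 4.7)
The strategy is to reduce the theorem to a purely algebraic compatibility between the pinching homomorphism $\Theta^q_{\g,\la}$ and the quantum coordinate changes $\Phi^q$, and then transfer that identity to representations by composing with $\rho_t$ and passing to the limit via Theorem~\ref{thmintro1}.

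Concretely, I would first prove the commutative diagram
\[
\Phi^q_{\la\la'}\circ\Theta^q_{\g,\la'}=\Theta^q_{\g,\la}\circ\Phi^q_{\la_\g\la'_\g}
\]
as an identity in the fraction division algebras, after extending $\Theta^q_{\g,\la}$ to a map $\widehat{\T}^q_{\la_\g}(S_\g)\to\widehat{\T}^q_\la(S)$ (which is legitimate since the Proposition sends generators to invertible monomials). Because the quantum coordinate changes are generated by elementary diagonal exchanges and edge relabellings, it suffices to check the square when $\la'$ is obtained from $\la$ by a single flip. If the flipped edge lies in a region away from $\g$ then $\la_\g=\la'_\g$ and the compatibility is immediate. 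The substantive case is a flip of an edge adjacent to one of the curves $\g_i$; here one compares the explicit monomial expressions defining $\Theta^q_{\g,\la}$ and $\Theta^q_{\g,\la'}$ and verifies, by direct manipulation, that the quantum flip formula on $S$ pulls back to the quantum flip formula on $S_\g$.

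Granted this, the defining property $\rho_{t,\la}\circ\Phi^q_{\la\la'}=\rho_{t,\la'}$ of the representation $\rho_t$ of $\T^q(S)$ yields, for every $t>0$,
\[
\rho_{t,\la'}\circ\Theta^q_{\g,\la'}=\rho_{t,\la}\circ\Theta^q_{\g,\la}\circ\Phi^q_{\la_\g\la'_\g}.
\]
Applying Theorem~\ref{thmintro1} to both $\la$ and $\la'$ and passing to $t\to 0$ gives
\[
\bigoplus_{i=0}^{N-1}\rho^i_{\g,\la'}=\bigoplus_{i=0}^{N-1}\bigl(\rho^i_{\g,\la}\circ\Phi^q_{\la_\g\la'_\g}\bigr).
\]
To extract the equality summand by summand, I would invoke the Bonahon--Liu classification: on each side the $i$-th summand is the unique irreducible representation classified by $m_\g$, the weights $p_1,\ldots,p_s$ at the old punctures, and the weight $i$ at the new punctures $v'$, $v''$. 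Continuity in $t$ of the families $\rho_{t,\la}$ and $\rho_{t,\la'}$ ensures that the label $i$ indexes the same summand on both sides, giving $\rho^i_{\g,\la'}=\rho^i_{\g,\la}\circ\Phi^q_{\la_\g\la'_\g}$. This is exactly the compatibility required for $\{\rho^i_{\g,\la}\}_\la$ to assemble into a representation $\rho^i_\g$ of $\T^q(S_\g)$, with the stated classifying data inherited from any of its local pieces.

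The main obstacle is the first step: tracking how the monomials defining $\Theta^q_{\g,\la}$ transform under a diagonal exchange that interacts with $\g$, and checking that this matches the quantum flip on $\la_\g$. The Chekhov--Fock coordinate change formulas become singular precisely in the regime where some shear parameters near $\g$ are being pushed to infinity, but since $\Theta^q_{\g,\la}$ is defined by a rational recipe the algebraic identity can be verified independently of the limit. The direct-sum structure provided by Theorem~\ref{thmintro1} is then robust enough to carry the compatibility down to each irreducible component.
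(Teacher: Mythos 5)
Your overall architecture (reduce to a single diagonal exchange via the composition rule and Lemma~\ref{criterion}, then pass the compatibility through $\rho_t$ and take $t\to0$) matches the paper's, but the first and central step of your proposal is wrong: the square
\[
\Phi^q_{\la\la'}\circ\Theta^q_{\g,\la'}=\Theta^q_{\g,\la}\circ\Phi^q_{\la_\g\la'_\g}
\]
is \emph{not} an identity in the fraction algebras in exactly the ``substantive case'' you single out. The paper states explicitly that this diagram is in general non-commutative, and Proposition~\ref{inducedmap} proves commutativity only when $\g$ does not cross the flipped square $Q$ vertically or horizontally. When $\g$ does cross $Q$ vertically (say), the induced triangulations $\la_\g$ and $\la'_\g$ coincide, so the right-hand side is just $\Theta^q_{\g,\la}$, whose values on generators are monomials; but the left-hand side applies $\Phi^q_{\la\la'}$, whose formulas contain the non-monomial factors $(1+qX_0)$ and $(1+qX_0^{-1})^{-1}$, to monomials of $\Theta^q_{\g,\la'}$ that involve $X'_0$ and $X'_1$ together. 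The two sides are therefore different elements of $\widehat{\T}^q_\la(S)$, and no amount of direct manipulation will make them equal. Your closing remark that ``the algebraic identity can be verified independently of the limit'' is precisely the point at which the argument fails.

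What the paper does instead is to introduce degenerate coordinate changes $\Psi^q_{\la\la',v}$ and $\Psi^q_{\la\la',h}$ (obtained by replacing $(1+qX_0)$ by $qX_0$, resp.\ $(1+qX_0^{-1})^{-1}$ by $q^{-1}X_0$), prove the exact identities $\Psi^q_{\la\la',v}\circ\Theta^q_{\g,\la'}=\Theta^q_{\g,\la}$ and $\Psi^q_{\la\la',h}\circ\Theta^q_{\g,\la'}=\Theta^q_{\g,\la}$ in the crossing cases, and then show (Lemma~\ref{limitchangecoord}) that $\rho_{t,\la}\circ\Phi^q_{\la\la'}\sim\rho_{t,\la}\circ\Psi^q_{\la\la',i}$ as $t\to0$, because the shear parameter $x_0(t)$ of the crossed edge tends to $\infty$ or $0$. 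The limit is thus essential to restoring compatibility, not a formality layered on top of an algebraic identity. Your final step of matching summands is also handled more carefully in the paper: rather than appealing to continuity of labels, one observes that $\Phi^q_{\la_\g\la'_\g}(P'_\g)=P_\g$ (or that $P_\g=P'_\g$ in the crossing case), so the eigenspace decompositions on the two sides correspond eigenvalue by eigenvalue.
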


\noindent
\textbf{Acknowledgments:} Most of the content of this paper was written as a graduate student at the University of Southern California under the supervision of Francis Bonahon. It is with great pleasure that I thank him for his support and continued interest in my work. His healthy skepticism during our frequent conversations made my modest victories all the more rewarding. I would also like to thank Ko Honda and Bob Penner for their support, as well as my fellow students at the time, Roman Golovko and Dmytro Chebotarov, for our frequent mathematical discussions. I would also like to thank St\'ephane Baseilhac and Feng Luo for several enlightening conversations regarding this work.



\section{Geometric background}

Throughout this paper, $S$ will be an oriented surface of genus $g$ obtained from a closed surface $\overline{S}$ without boundary by removing $s$ punctures $v_1,\ldots,v_s$. We will assume that $S$ has at least one puncture and has Euler characteristic $\chi(S)=2-2g-s<0$. The simplest such surfaces are the spheres with 3 or 4 punctures and the once-punctured torus.

\subsection{Teichm\"uller spaces}
For the purpose of this paper we will need two variants of Teichm\"uller space. The first and most classical one, denoted simply as $\T(S)$, will be the set of isotopy classes of complete hyperbolic metrics on $S$ with finite area. We will call this space simply the \emph{Teichm\"uller space} of $S$. However, we will need to drop the finite area condition to describe the exponential shear coordinates, which are essential to the definition of the quantum Teichm\"uller space. Let $Conv(S,m)$ denote the \emph{convex core} of $S$, that is, the smallest non-empty closed convex subset of $(S,m)$. $Conv(S,m)$ is a surface with cusps and geodesic boundaries and is homeomorphic to $S$. If $(S,m)$ has finite area then its convex core consists of the whole surface. Otherwise, some of the punctures  of $(S,m)$ will have a neighborhood isometric to an infinite area funnel bounded by one of the geodesic boundaries of $Conv(S,m)$. We let $\widetilde{\T}(S)$ be the space of isotopy classes of complete hyperbolic metrics on $S$, possibly with infinite volume, together with an orientation of each of the boundary components of $Conv(S,m)$. $\widetilde{\T}(S)$ is called the \emph{enhanced Teichm\"uller space} of $S$. In particular, since, for a complete hyperbolic metric, $Conv(S,m)$ has no boundary component, there is a natural embedding of $\T(S)$ into $\widetilde{\T}(S)$. 

\subsection{The augmented Teichm\"uller space}
We will also need to consider the \emph{augmented Teichm\"uller space} $\overline{\T(S)}$ which was introduced by Abikoff \cite{Abi1,Abi2} and Bers \cite{Ber1}, and was further studied by Masur \cite{Mas1} (See more recently Wolpert \cite{Wol2} and references therein). We will briefly recall its construction and some of its properties.

Let $\g=\g_1\cup\cdots\cup\g_k$ be the union of $k$ disjoint, non-homotopic, essential simple closed curves. Such a $\g$ will be called a \emph{multicurve}. Alternatively, $\g$ corresponds to a $(k-1)$-simplex in $C(S)$, the \emph{complex of curves} of $S$. We will denote by $S_\g$ the surface obtained from $S$ by removing the multicurve $\g$. It is a possibly disconnected surface with two new punctures for each curve removed. As a set, we define
\[\overline{\T(S)}=\T(S)\cup\bigcup_{\g\in C(S)}\T(S_{\g}),
\]
where $\T(S_\g)$ is the product of Teichm\"uller spaces associated to the connected components of $S_\g$. The $\T(S_\g)$ are called the \emph{strata} of $\overline{\T(S)}$.

A topology on $\overline{\T(S)}$ can be defined as follows: a sequence of metrics $(m_n)_n$ in $\T(S)$ converges to $m_\g\in\T(S_\g)$ if, as $n\rightarrow\infty$, the length $l_{m_n}(\g_i)$ of (the geodesic representative of) $\g_i$ for $m_n$ tends to 0 for every $i$, and $m_n$ converges uniformly to $m_\g$ on every compact subset of $S_\g$.

The action of the mapping class group $MCG(S)$ of $S$ on the Teichm\"uller space extends to $\overline{\T(S)}$ and the quotient $\overline{\T(S)}/MCG(S)$ can be identified, as a topological space, with the Deligne--Mumford compactification $\overline{\M(S)}$ of the moduli space $\M(S)=\T(S)/MCG(S)$ \cite{Abi1}.

\subsection{Exponential shear coordinates}

One of the main ingredients for the quantization of Teichm\"uller space as described first in \cite{CheFo} is the notion of shear coordinates introduced by W. Thurston \cite{Thurston} (see also \cite{Bo1}). We will describe here their exponential version, following for example \cite{Liu1}.

Since $\chi(S)<0$ and $S$ has at least one puncture, it admits an \emph{ideal triangulation} $\lambda=\left\{\la_1,\ldots,\la_n\right\}$, that is, a triangulation of $\bar{S}$ with vertices $v_1,\ldots,v_s$ and edges  $\la_1,\ldots,\la_n$, where the edges are considered up to isotopy. The number of edges of an ideal triangulation depends only on the Euler characteristic of $S$ and is given by $n=-3\chi(S)=6g+3s-6$. If we endow $S$ with a hyperbolic metric $m$, each edge $\la_i$ is isotopic to a unique geodesic $g_i$ for this metric. One can then associate to $\la_i$ a number $x_i\in\R_+$, called the \emph{exponential shear parameter} of $m$ along $\la_i$, obtained as follows: let $\tilde{g}_i$ be a lift of $g_i$ to the universal cover of $(S,m)$, which we identify with the upper half-space $\HH^2$. $\tilde{g}_i$ separates two triangles $\tilde{T}^1_i$ and $\tilde{T}^2_i$, bounded by lifts of edges of $\la$ so that the union $\tilde{Q}_i=\tilde{g}_i\cup\tilde{T}^1_i\cup\tilde{T}^2_i$ forms a square in $\HH^2$ with vertices on the real line bounding $\HH^2$. For a given orientation of $\tilde{g}_i$, we name the vertices of $\tilde{Q}_i$ by $z_-$, $z_+$, $z_r$, $z_l$, such that $\tilde{g}_i$ goes from $z_-$ to $z_+$, and $z_r$ and $z_l$ are respectively to the right and to the left of $\tilde{g}_i$. The exponential shear parameter of $m$ along $\la_i$ is defined as
\[x_i=-\text{cross-ratio}(z_r,z_l,z_-,z_+)=-\frac{(z_r-z_-)(z_l-z_+)}{(z_r-z_+)(z_l-z_-)}\in\R_+.\]
Geometrically, $\log x_i$ corresponds to the (signed) distance between the orthogonal projections of $z_r$ and $z_l$ onto $\tilde{g}_i$.

Conversely, one can construct a (possibly incomplete) hyperbolic metric $m$ from any choice of parameters $x_1,\ldots,x_n\in\R_+$ associated to the edges of $\la$, obtained by gluing ideal hyperbolic triangles into squares whose vertices have the prescribed cross-ratio. Its completion is a hyperbolic surface $S'$ with geodesic boundaries and cusps, for which each end of the edges of $\la$ either converges to a cusp or spirals around a geodesic boundary. The direction of the spiraling provides an orientation of the geodesic boundary. This metric on $S'$ admits a unique extension to a complete metric on $S$ whose convex core is $S'$.  In this way, one obtains a homeomorphism $\phi_\la\colon\widetilde{\T}(S)\to\R^n_+$ for every ideal triangulation $\la$ of $S$.

Given $x_1,\ldots,x_n$ the shear parameters of $m\in\widetilde{\T}(S)$ associated to a triangulation $\la$, one can read the geometry of $(S,m)$ around each puncture $v_j$ of $S$ as follows: let $p_j=x^{k_{1j}}_1\cdots x^{k_{nj}}_n$ where $k_{ij}\in\left\{0,1,2\right\} $ is the number of ends of the edge $\la_i$ that converge to $v_j$. Then, if $p_j=1$, $v_j$ is a cusp. Otherwise, $\left|\log p_j\right|$ is the length of the boundary component of $Conv(S,m)$ facing $v_j$. Its sign corresponds to the orientation of the boundary component with respect to the orientation of $Conv(S,m)$. As a consequence, the homeomorphism $\phi_\la$ restricts to an embedding of $\T(S)$ into $\R^n_+$, corresponding to setting all the $p_j$ equal to 1.

The shear parameters are defined along each edge of $\la$ or equivalently for pairs of adjacent triangles. This definition generalizes to any pair of triangles in the universal cover of $S$ as follows: let $\tilde{\la}$ be the lift of $\la$ to the universal cover $\widetilde{S}$ of $(S,m)$. Let $P$ and $Q$ be two ideal triangles in $\widetilde{S}$ delimited by $\tilde{\la}$. Let $\tilde{\la}_{i_1},\ldots,\tilde{\la}_{i_l}$, lifts of $\la_{i_1},\ldots,\la_{i_l}$ respectively, be the set of edges of $\tilde{\la}$ separating $P$ and $Q$. We include in this set the edges of $P$ and $Q$ which are closest to each other. The \emph{shearing cocycle} $\s$ of $m\in\T(S)$ associated to $\la$ is defined for such triangles by
\[\s(P,Q)=\sum^l_{j=1}\log x_{i_j}
\]
where $x_{i_j}$ is the shearing parameter of $m$ for the edge $\la_{i_j}$. Some properties of the shearing cocycles will be needed later on and we refer to \cite{Bo1} for more details.

\subsection{The Weil--Petersson Poisson structure\label{WP}}
The enhanced Teichm\"uller space can be endowed with a Poisson structure which admits a simple expression in the logarithmic shear coordinates associated to a triangulation $\la$ of $S$ (see for example \cite{Fo}). $S\smallsetminus\la$ has $2n$ spikes converging toward the punctures, each of them delimited by edges $\la_i$ and $\la_j$ not necessarily distinct. For $i$, $j\in\left\{1,\ldots,n\right\}$, let $a_{ij}\in\left\{0,1,2\right\}$ be the number of spikes of $S\smallsetminus\la$ which are delimited to the left by $\la_i$ and to the right by $\la_j$, when looking toward the end of the spikes. The Weil--Petersson Poisson structure is given in coordinates by the following bi-vector
\[\Pi_{WP}=\sum_{i,j}\sigma_{ij}\frac{\partial}{\partial\log x_i}\wedge\frac{\partial}{\partial \log x_j}
\]
where 
\[\sigma_{ij}=a_{ij}-a_{ji}\in\left\{-2,-1,0,1,2\right\}.
\]

Its degeneracy is well understood: The lengths $\log p_j$ associated to the punctures $v_j$ are Casimir functions for $\Pi_{WP}$ and the cusped Teichm\"uller space $\T(S)=\left\{\log p_1=\ldots=\log p_s=0\right\}$ is a symplectic leaf for this structure. The induced symplectic form on $\T(S)$ can then be identified with the K\"ahler form associated to the usual Weil--Petersson metric on Teichm\"uller space.



\section{Pinching along curves: geometric aspects}

We suppose once again that $S$ is an oriented surface of genus $g$ with $s\geq 1$ punctures $v_1$, \ldots, $v_s$ and such that $\chi(S)<0$. Let $\g=\g_1\cup\cdots\cup\g_k$ be a multicurve and $S_\g=S\smallsetminus\g$. It is homeomorphic to a surface with $s+2k$ punctures: the ``old'' ones $v_1$, \ldots, $v_s$ and two new punctures $v'_i$ and $v''_i$ corresponding to the removal of $\g_i$ for $i=1,\ldots k$. Alternatively one can think of $S_\g$ as being obtained from $S$ by pinching the multicurve $\g$ to $k$ nodes and removing them. Note that $S_\g$ may be disconnected.

The goal of this section is to describe explicitly the behavior of the shear coordinates and the Weil--Petersson Poisson structure when going from $\T(S)$ to $\T(S_\g)$ in the topology of the augmented Teichm\"uller space $\overline{\T(S)}$.

\subsection{Induced ideal triangulations}

Given an ideal triangulation $\lambda=\left\{\la_1,\ldots,\la_n\right\}$ of $S$, we want to define an induced ideal triangulation $\lambda_\gamma$ of $S_\gamma$. We can choose $\g=\cup_i\g_i$ so that the $\g_i$ never cross the same edge twice in a row. Let $\lambda\smallsetminus\gamma$ denote the family of arcs obtained from the edges of $\lambda$ intersected with $S_\gamma$. We group these arcs into distinct isotopy classes $\mu_1$, \ldots, $\mu_l$ in $S_\g$, each consisting of a certain number of segments of $\la_1$, \ldots, $\la_n$. Hence $\left\{\mu_1,\ldots,\mu_l\right\}$ is a family of non-intersecting and non-homotopic arcs in $S_\g$.

\begin{lem}
\label{inducedtriang}
The family $\lambda_\gamma=\left\{\mu_1,\ldots,\mu_n\right\}$ is an ideal triangulation of $S_\gamma$ consisting of $n$ distinct isotopy classes of arcs.
\end{lem}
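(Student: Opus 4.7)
The plan is to verify the statement in two stages: first compute via an Euler-characteristic count that any ideal triangulation of $S_\g$ must have $n$ edges, and then analyze the local structure inside each triangle of $\la$ to show that the arcs of $\la\smallsetminus\g$, once grouped by isotopy class, assemble into exactly such a triangulation.

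First I would observe that $\chi(S_\g)=\chi(S)$. This follows by induction on the number of components of $\g$: cutting $S$ along a single curve and pinching adds two new punctures and either decreases the genus by one (in the non-separating case) or splits the surface into two components whose Euler characteristics sum appropriately (in the separating case); in each case $\chi$ is preserved. Consequently any ideal triangulation of $S_\g$ has $-3\chi(S_\g)=-3\chi(S)=n$ edges and $-2\chi(S)$ triangles, which pins down the target count.

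The main technical step is the local analysis in each ideal triangle $T$ of $\la$. The hypothesis that no $\g_i$ crosses the same edge twice in a row guarantees that each arc of $\g\cap T$ connects two distinct edges of $T$. Such arcs cut $T$ into polygonal regions whose boundaries alternate between arc-segments of $\g$ and edge-segments of $\la$. After pinching, so that each arc-endpoint becomes a cusp at one of the new punctures $v'_i$, $v''_i$, each region becomes a polygon in $S_\g$ of one of two types: either an \emph{essential} region bounded by three edge-segments, which becomes an ideal triangle of $S_\g$ with vertices at some combination of old and new punctures; or a \emph{cusped bigon} bounded by exactly two edge-segments with the same pair of endpoints, which cobound a disk in $S_\g$ and are therefore isotopic rel endpoints. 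Identifying the two boundary arcs of each cusped bigon collapses it, and the essential regions assemble globally into an ideal triangulation of $S_\g$ whose edges are precisely the isotopy classes $\mu_1,\ldots,\mu_l$. The identity $l=n$ then follows automatically from the Euler-characteristic count of the previous step.

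I expect the main obstacle to lie in the case analysis of the local decomposition of $T$ when $T$ contains arcs of $\g$ of several different ``types'' (each type connecting a different pair of edges of $T$) simultaneously. In that situation the arrangement can create a central essential region bounded by up to three arcs of $\g$, surrounded by nested families of parallel arcs each contributing their own cusped bigons. A clean way to organize this is to first retract each maximal family of parallel arcs of the same type to a single representative at its outermost position, reducing to the elementary configurations (no arc, one arc, two arcs of different types, three arcs of pairwise distinct types), which can then be checked directly. One should also verify that the isotopies realizing the identifications across different triangles are compatible, so that the global count of isotopy classes indeed matches the local one, but this is forced once the triangulation structure is in hand.
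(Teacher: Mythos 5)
Your proposal follows the same overall strategy as the paper: decompose each triangle of $\la$ by the arcs of $\g$, observe that the resulting pieces of $S_\g$ are ideal triangles and bigons, collapse the bigons, and recover the count $l=n$ from $\chi(S_\g)=\chi(S)$. The local analysis inside a single triangle (three families of parallel corner-cutting arcs surrounding one central region with $0$ to $3$ new vertices) is correct and is essentially the content of the paper's Figure~\ref{pieces}.

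There is, however, a genuine gap at the global assembly step, and it is the only non-routine point of the lemma. You assert that ``identifying the two boundary arcs of each cusped bigon collapses it, and the essential regions assemble globally into an ideal triangulation,'' but collapsing a maximal chain of adjacent bigons to a single arc is only possible when that chain is \emph{not} a cycle. All sides of the bigons in one maximal chain share the same pair of endpoints (adjacent bigons share a side, and the two sides of a bigon share both endpoints), so if the chain closes up, its union is open and closed in $S_\g$ and is therefore an entire component of $S_\g$ homeomorphic to a twice-punctured sphere; taking one representative of the single isotopy class of these arcs leaves a complementary bigon, not triangles, and such a component admits no ideal triangulation at all since its Euler characteristic is $0$ (so your Euler-characteristic count also breaks for that component). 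This configuration occurs exactly when some $\g_i$ bounds a once-punctured disk or when two components of $\g$ cobound an annulus --- which is why the hypotheses that the $\g_i$ are essential and pairwise non-homotopic are indispensable, and why the statement is false without them. Your argument never invokes these hypotheses, which is the telltale sign of the gap; the paper's proof devotes its Figure~\ref{problem} precisely to exhibiting and excluding these degenerate chains. Your closing remark that compatibility of the identifications ``is forced once the triangulation structure is in hand'' is circular here: the triangulation structure is exactly what fails in the degenerate cases.
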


\begin{proof} Since the $\g_i$ do not backtrack, $\lambda\smallsetminus\gamma$ decomposes $S_\gamma$ into pieces of the form given in Figure~\ref{pieces}, where the dashed lines represent $\gamma$ and the first piece can have 0, 1, 2 or 3 such sides.
\begin{figure}[htb!]
\includegraphics{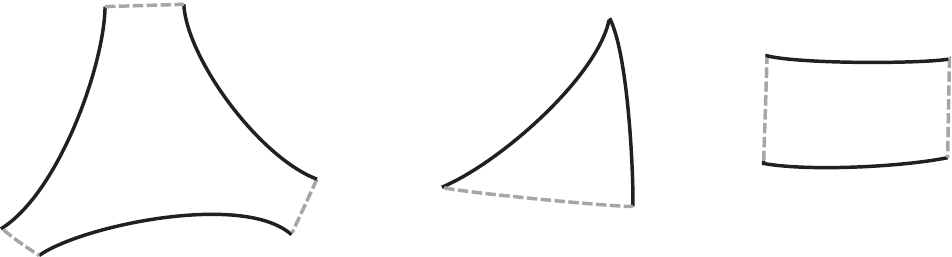}
\caption{\label{pieces}}
\end{figure}
The first piece is an ideal triangle in $S_\g$ and the other two are bigons. One can collapse these bigons successively to arcs with one or two vertices being among the new punctures of $S_\g$. This can be done for all the bigons successively unless one of the situations described in Figure~\ref{problem} occurs. This cannot happen, however, since the $\gamma_i$ are essential and non-homotopic to each other.
\begin{figure}[htb!]
\includegraphics{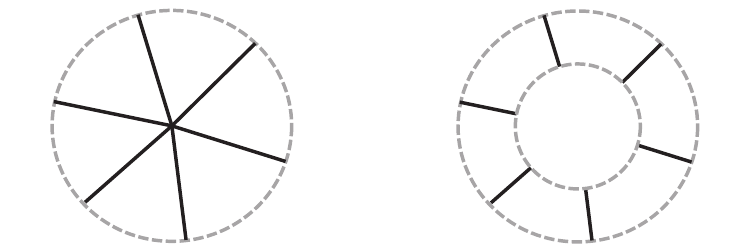}
\caption{\label{problem}}
\end{figure}
The remaining arcs correspond to the homotopy classes $\mu_1,\ldots,\mu_l$ and decompose $S_\g$ into ideal triangles. Since $\chi(S_\g)=\chi(S)$ and the number of edges of an ideal triangulation depends only on the Euler characteristic, the ideal triangulation $\la_\g=\left\{\mu_1,\ldots,\mu_n\right\}$ of $S_\g$ has the same number of edges as $\la$.
\end{proof}

We call $\la_\g$ the \emph{ideal triangulation of $S_\g$ induced by $\la$}.

\begin{rem}
\label{process}
In practice each edge $\mu_i$ of $\la_\g$ is obtained from $\la$ by considering a maximal sequence of adjacent bigons in the decomposition of $S_\g$ by $\la\smallsetminus\g$ and collapsing it to an edge. Via this process, ideal triangles for $\la_\g$ on $S_\g$ are identified naturally with ideal triangles for $\la$ on $S$ (see Figure~\ref{induced}).

\begin{figure}[htb!]
\includegraphics{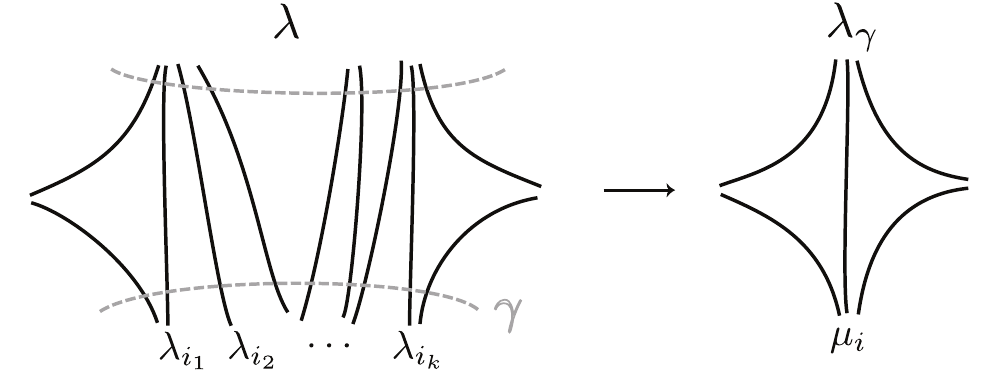}
\caption{\label{induced}}
\end{figure}

\end{rem}

\subsection{Extension of the shear coordinates\label{extension}}

Let $\la=\left\{\la_1,\ldots,\la_n\right\}$ be an ideal triangulation of $S$ and $\la_\g=\left\{\mu_1,\ldots,\mu_n\right\}$ be the induced triangulation of $S_\g$. We suppose that, for $i=1,\ldots,n$, $\mu_i$ corresponds to the homotopy class of  $k_{ij}$ segments from $\la_j$ for $j=1,\ldots,n$. The following proposition relates the shear coordinates on $\T(S)$ associated to $\la$ to the ones on $\T(S_\g)$ associated to $\la_\g$.

\begin{prop}
\label{coordlimit} 
Let $m_t\in\T(S)$ be a continuous family of hyperbolic metrics on $S$, $(x_1(t),\ldots,x_n(t))\in\R^n_+$ their shear parameters for $\la$, and  $m_\g\in\T(S_\gamma)$, $(y_1,\ldots,y_n)\in\R^n_+$ its shear parameters for $\la_\g$. Then
\begin{align*}
m_t \xrightarrow[t\rightarrow 0]{} m_\g\ \text{in}\ \overline{\mathcal{T}(S)}\Rightarrow \lim_{t\rightarrow 0} x^{k_{i1}}_1(t)\cdots x^{k_{in}}_n(t)=y_i\ \text{for}\ i=1,\ldots,n.
\end{align*}
\end{prop}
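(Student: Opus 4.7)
The plan is to recognize both sides as shearing cocycles from Subsection~2.3 and to establish convergence using the defining topology of $\overline{\T(S)}$: $m_t \to m_\g$ uniformly on compact subsets of $S_\g$.

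By Remark~\ref{process}, the edge $\mu_i$ of $\la_\g$ is isotopic in $S_\g$ to a concatenation of $l$ segments of $\la$, with exactly $k_{ij}$ of them coming from $\la_j$. Lifting the associated chain of $\la$-triangles $\widetilde T_0, \widetilde T_1, \ldots, \widetilde T_l$ to the universal cover $\widetilde S$ of $(S, m_t)$, additivity of the shearing cocycle $\s$ of $m_t$ associated to $\la$ gives
$$\sum_{j=1}^n k_{ij}\log x_j(t) \;=\; \sum_{j=1}^l \log x_{e_j}(t) \;=\; \s(\widetilde T_0,\widetilde T_l).$$
The bigon-collapsing identification of Remark~\ref{process} sends the end-triangles $T_0, T_l$ to the two triangles $T^1_\g, T^2_\g$ of $\la_\g$ adjacent to $\mu_i$ in $S_\g$, so that $\log y_i$ equals the shearing cocycle of $m_\g$ on compatible lifts $\widetilde T^1_\g, \widetilde T^2_\g$ in $\widetilde{S_\g}$. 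The problem reduces to showing that these two cocycles have the same limit.

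Since $T_0$ and $T_l$ lie in a compact subset of $S_\g$ away from $\g$, after developing into $\HH^2$ the ideal vertices of $\widetilde T_0, \widetilde T_l$ converge to those of $\widetilde T^1_\g, \widetilde T^2_\g$; in particular the four ``extreme'' vertices of the polygon $\widetilde T_0 \cup \cdots \cup \widetilde T_l$ converge to the four vertices $z^\g_-, z^\g_+, z^\g_r, z^\g_l$ of the square $\widetilde T^1_\g \cup \widetilde\mu_i \cup \widetilde T^2_\g$ whose cross-ratio realizes $y_i$. The main obstacle is that the left-hand cocycle is a sum of $l$ cross-ratios while the right-hand side is a single one; individual shears $x_{e_j}(t)$ may blow up or vanish as $\g$ pinches, because the intermediate ideal vertices of the polygon lie on lifts of the $\g_i$ and coalesce pairwise to parabolic fixed points in the limit. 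To conclude, I would expand $\prod_j x_{e_j}(t)$ as an iterated cross-ratio using the formula of Subsection~2.3 and reduce by induction on $l$, using the cocycle property of $\s$ to eliminate one intermediate triangle at a time, until only the four extreme vertices remain; continuity of the cross-ratio in its arguments then yields $-\mathrm{cross\text{-}ratio}(z^\g_r, z^\g_l, z^\g_-, z^\g_+) = y_i$ in the limit.
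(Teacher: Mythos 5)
Your setup coincides with the paper's: you identify $\sum_j k_{ij}\log x_j(t)$ with the shearing cocycle $\sigma_t(\widetilde T_0,\widetilde T_l)$ of $m_t$ between the two end triangles of the chain of bigons representing $\mu_i$, you note that these end triangles can be lifted so as to converge to the two triangles adjacent to $\widetilde\mu_i$ for $m_\g$, and you correctly isolate the real difficulty: the intermediate ideal vertices lie on lifts of $\g$, coalesce in the limit, and the individual factors $x_{e_j}(t)$ degenerate. The gap is in your concluding step. The product $\prod_j x_{e_j}(t)$ does \emph{not} reduce to a single cross-ratio of four ``extreme'' vertices of the polygon $\widetilde T_0\cup\cdots\cup\widetilde T_l$. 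Already for a fan of three ideal triangles with vertices $(a,b,\infty)$, $(b,d,\infty)$, $(d,e,\infty)$ and $a<b<d<e$, one computes $x_{e_1}x_{e_2}=\frac{d-b}{b-a}\cdot\frac{e-d}{d-b}=\frac{e-d}{b-a}$: after the telescoping cancellation the expression still depends on both intermediate vertices $b$ and $d$ and is a ratio of single differences, hence not the cross-ratio of any four points of the configuration. (Note also that the polygon has five ideal vertices converging to the four vertices of the limit square, so ``the four extreme vertices'' is not even well defined before the limit.) Consequently there is no stage of your induction at which only four vertices remain, and ``continuity of the cross-ratio'' cannot be invoked, since the whole issue is that arguments of the individual cross-ratios are colliding. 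What is needed, and what your sketch does not supply, is a proof that the \emph{sum} $\sigma_t(\widetilde T_0,\widetilde T_l)$ converges even though its individual terms do not.

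The paper closes exactly this gap with a quantitative estimate derived from Lemma 8 of \cite{Bo1}: if $a_t$ and $b_t$ are the orthogonal projections of the third vertices of the two end triangles onto their closest edges $g_t$ and $h_t$, and $b'_t$ is the projection of $b_t$ onto $g_t$, then $\left|\sigma_t(P^i_t,Q^i_t)-d(a_t,b'_t)\right|\leq l_{m_t}(\g)$. Since $l_{m_t}(\g)\to0$ and $a_t,b'_t$ converge to the projections $a_0,b_0$ onto $\widetilde\mu_i$ determined by the limit triangles, one gets $\lim_t\sigma_t(P^i_t,Q^i_t)=d(a_0,b_0)=\sigma_0(P^i_0,Q^i_0)=\log y_i$. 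Your route could in principle be completed by showing directly that the rational function of the vertex positions obtained after cancellation extends continuously to the degenerate configuration where the intermediate vertices coalesce pairwise along lifts of $\g$ (the $l=2$ computation above is the first instance), but that continuity statement is precisely the content of the proposition and must be proved; as written, your argument does not close.
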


\begin{proof} Let $\mu_i$ be an edge of $\la_\g$ in $S_\g$ and $\tilde{\mu}_i$ be one of its lifts to the universal cover of $(S_\g,m_\g)$. Then $\tilde{\mu}_i$ is the diagonal of a square consisting of two ideal triangles $P^i_0$ and $Q^i_0$. Note that the universal cover of $(S_\g,m_\g)$ is isometric to several copies of $\HH^2$, one for each connected component of $S_\g$. We denote by $\widetilde{S}_\g$ the one containing $\tilde{\mu}_i$. By Remark~\ref{process}, $\mu_i$ corresponds to a rectangle composed of a succession of bigons in the decomposition of $S_\g$ by $\la\smallsetminus\g$, ending at the sides of two (non-necessarily distinct) triangles. We consider a lift of this rectangle to the universal cover $\widetilde{S_t}$ of $(S,m_t)$. It ends at the sides of two triangles $P^i_t$ and $Q^i_t$ which are separated by $k_{ij}$ lifts of the edge $\la_j$ for $j=1,\ldots,n$. Hence the shearing cocycle $\sigma_t$ associated to $m_t$ satisfies
\[\sigma_t(P^i_t,Q^i_t)=\sum_j k_{ij}\log x_j(t).
\]

In addition, since $m_t\rightarrow m_\g$, these lifts can be chosen so that, with the right identification of $\widetilde{S}_\g$ with $\widetilde{S_t}$, $P^i_t$ and $Q^i_t$ approach $P^i_0$ and $Q^i_0$ respectively as $t\rightarrow 0$.

We can then use the following inequality, derived from Lemma 8 in \cite{Bo1}: if $a_t$ (resp. $b_t$) is the projection of the third vertex of $P^i_t$ (resp. $Q^i_t$) onto $g_t$ (resp. $h_t$),where $g_t$ and $h_t$ are the edges of $P^i_t$ and $Q^i_t$ which are the closest to each other, and if $b'_t$ is the projection of $b_t$ onto $g_t$, then
\[\left|\sigma_t(P^i_t,Q^i_t)-d(a_t,b'_t)\right|\leq l_{m_t}(\gamma).
\]
We refer to Figure~\ref{shear} for an example with notations.
\begin{figure}[htb!]
\includegraphics{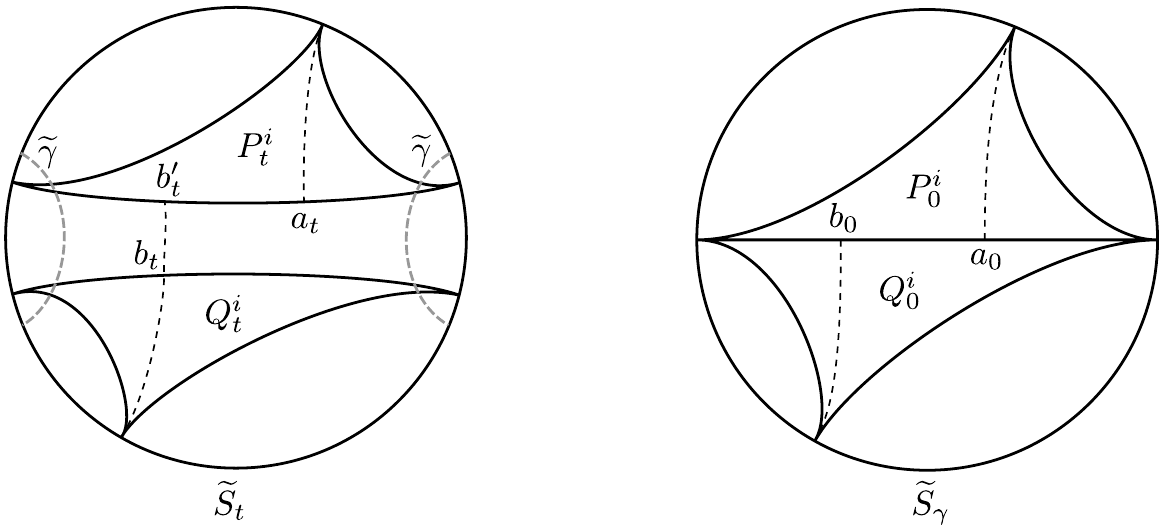}
\caption{\label{shear}}
\end{figure}

Since $P^i_t$ and $Q^i_t$ approach $P^i_0$ and $Q^i_0$ respectively, we have $a_t\rightarrow a_0$ and $b_t,b'_t\rightarrow b_0$ where $a_0$ and $b_0$ are the projections of the third vertex of $P^i_0$ and $Q^i_0$, respectively, onto $g_0=h_0=\tilde{\mu}_i$. Then
\begin{align*}
\lim_{t\rightarrow 0}\sum_j k_{ij}\log x_j(t)=\lim_{t\rightarrow 0}\sigma_t(P^i_t,Q^i_t)&=\lim_{t\rightarrow 0}d(a_t,b'_t)\\
											 &=d(a_0,b_0)\\
											 &=\sigma_0(P^i_0,Q^i_0)=\log y_i.
\end{align*}
\end{proof}

In other words, the shear parameter on $\T(S_\g)$ associated to an edge $\mu_i$ of $\la_\g$ is the limit of a monomial in the shear parameters on $\T(S)$ associated to $\la$. This monomial is given by the product of parameters associated to the (segments of) edges of $\la$ constituting the homotopy class of $\mu_i$.

Another important monomial can be associated to $\g$ itself, when it consists of one simple closed curve. We denote by $c_i$ the number of times $\g$ intersects the edge $\la_i$ and let
\[x_\g=x^{c_1}_1\cdots x^{c_n}_n.\]
We call this monomial the \emph{exponential graph length} of $\g$.

If $m_t\in\T(S)$, $t\in(0,1]$, is a family of hyperbolic metrics with shear parameters $x_1(t)$, \ldots, $x_n(t)$ and $x_\g(t)$ is the associated exponential graph length of $\g$, we have the following lemma.

\begin{lem}
\label{limitlength}
If, as $t\rightarrow 0$, $l_{m_t}(\g)$ approaches 0 then the exponential graph length $x_\g(t)$ of $\g$ approaches 1.
\end{lem}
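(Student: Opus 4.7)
The plan is to interpret $\log x_\g(t)$ as a shearing cocycle over one fundamental period of the deck transformation associated to $\g$, and then apply the same Bonahon-type estimate used in the proof of Proposition~\ref{coordlimit}.

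First, lift the geodesic representative of $\g$ in $(S,m_t)$ to a geodesic $\tilde g_t$ in the universal cover, and let $\phi=\phi^t_\g$ be the corresponding deck transformation: a hyperbolic isometry with axis $\tilde g_t$ and translation length $\ell(t)=l_{m_t}(\g)$. Choose an ideal triangle $T_0$ of the lifted triangulation $\tilde\la$ crossed by $\tilde g_t$, and set $T_m=\phi(T_0)$. Since $\g$ crosses $\la_j$ exactly $c_j$ times in one period, the set of edges of $\tilde\la$ separating $T_0$ from $T_m$, augmented by the edges of $T_0$ and $T_m$ closest to each other as in the definition of the shearing cocycle, consists of exactly $c_j$ lifts of $\la_j$ for each $j$. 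Consequently
\[
\s_t(T_0,T_m)=\sum_{j=1}^n c_j\log x_j(t)=\log x_\g(t).
\]

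Next, because all of the separating edges are crossed by the single curve $\g$, the inequality from Lemma~8 of \cite{Bo1} applied in the proof of Proposition~\ref{coordlimit} gives
\[
\bigl|\s_t(T_0,T_m)-d(a_t,b'_t)\bigr|\leq l_{m_t}(\g)=\ell(t),
\]
where $a_t$, $b_t$, $b'_t$ are the projections defined there. Since $\ell(t)\to 0$ by hypothesis, the conclusion reduces to showing $d(a_t,b'_t)\to 0$.

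For this, note that $T_m=\phi(T_0)$ forces $v_m=\phi(v_0)$ for the third vertices and $h_t=\phi(g^-_t)$ for the edge of $T_m$ closest to $T_0$, where $g^-_t$ is the edge of $T_0$ opposite to the one closest to $T_m$. Hence $b_t=\phi(p_-)$, where $p_-$ is the projection of $v_0$ onto $g^-_t$. For a hyperbolic isometry of translation length $\ell$ acting on a point at distance $d$ from its axis, the displacement $\delta$ satisfies $\cosh\delta=\cosh^2d\,\cosh\ell-\sinh^2d$, which tends to $1$ as $\ell\to 0$ with $d$ bounded. Using the convergence $m_t\to m_\g$ in $\overline{\T(S)}$ to choose $T_0$ so that $p_-$ stays at bounded distance from $\tilde g_t$ uniformly in $t$, we obtain $d(p_-,\phi(p_-))\to 0$, so $b_t\to a_t$ and therefore $b'_t\to a_t$ as well. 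Combining with the previous inequality yields $\log x_\g(t)\to 0$, hence $x_\g(t)\to 1$.

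The main obstacle will be the last step: one must select the lift $T_0$ in a way that its third vertex (and the relevant projections) remain at bounded distance from $\tilde g_t$ as $t\to 0$, so that the explicit displacement formula for $\phi$ forces the projections to collapse. This uniform choice is natural because only the neighborhood of $\g$ degenerates in the pinching limit, while the triangulation on the complement converges to $\la_\g$ on $(S_\g,m_\g)$, but it requires some geometric care to make rigorous.
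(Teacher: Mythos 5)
Your reduction of $\log x_\g(t)$ to a shearing cocycle is correct as far as it goes: the edges of $\tilde\la$ separating $T_0$ from $T_m=\phi(T_0)$, together with the two closest sides, are exactly the $\sum_j c_j$ edges crossed by one period of the axis, so $\s_t(T_0,T_m)=\log x_\g(t)$. But the rest of the argument has a genuine gap, and the final step is false. You claim $b_t\to a_t$ because $b_t=\phi(p_-)$ and $d(p_-,\phi(p_-))\to0$; this only gives $b_t\to p_-$, and $p_-$ is \emph{not} $a_t$. The point $a_t$ is the projection of the vertex of $T_0$ opposite the exit side $g_t$ onto $g_t$, while $p_-$ is the projection of the vertex opposite the entry side onto the entry side --- the tangency points of the inscribed circle with two \emph{different} sides of an ideal triangle. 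In the model triangle with vertices $0,1,\infty$, with $g_t$ the side $(1,\infty)$ and entry side $(0,\infty)$, one computes $a_t=1+\I$, $p_-=\I$, and $\mathrm{proj}_{g_t}(\I)=1+\I\sqrt2$, so $d(a_t,b'_t)\to\tfrac12\log2\neq0$. Since $T_0$ converges to an honest ideal triangle of $(S_\g,m_\g)$, this limit really is nonzero, so the conclusion cannot be extracted this way.

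This also shows that your invocation of the estimate $\left|\s_t(P,Q)-d(a_t,b'_t)\right|\leq l_{m_t}(\g)$ is not legitimate here: in Proposition~\ref{coordlimit} that bound is derived for two triangles separated by a chain of collapsing bigons, so that all separating edges and both closest sides converge to a \emph{single} geodesic $\tilde\mu_i$ of the limit surface. In your configuration the separating edges all cross one fundamental period of the shrinking axis and converge to a fan of \emph{distinct} geodesics through the forming node, with $g_t$ and $h_t$ limiting to two different sides of one triangle; the bound fails there (indeed, if it held, the computation above combined with $\s_t(T_0,T_m)=\log x_\g(t)\to0$ would give a contradiction). The paper avoids all of this with a one-line argument: by the trace formula (\ref{comblength}), $2\cosh(l_{m_t}(\g)/2)=x_\g(t)^{1/2}+x_\g(t)^{-1/2}+\ldots$ with all remaining terms positive, and since $u^{1/2}+u^{-1/2}\geq2$ with equality only at $u=1$, the left-hand side tending to $2$ forces $x_\g(t)\to1$. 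I would adopt that route; I do not see how to repair the cocycle approach without effectively re-deriving the trace identity.
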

\begin{proof}

This is a consequence of the formula for the length of $\g$ in shear coordinates as described for example in \cite{CheFo}. In particular, we have
\begin{align}
\label{comblength}
2\cosh(l_{m_t}(\g)/2)=x^{1/2}_\g(t)+x^{-1/2}_\g(t)+\ldots
\end{align}
where all the terms in the sum are positive.

Since the left hand side approaches 2, we see that $x_\g(t)$ must approach 1 as $t\rightarrow 0$.
\end{proof}

It also follows from this proof that the other terms on the right hand side of (\ref{comblength}) approach 0 as $t\rightarrow 0$. This means that, near the stratum $\T(S_\g)$, the length function $l_{.}(\g)$ is asymptotically equivalent to the graph length function $\log x_\g$. This explains the essential r\^ole the quantum analogue $X_\g$ of $x_\g$ will play later on.

\subsection{Extension of the Weil--Petersson Poisson structure}

Masur \cite{Mas1} (see also Wolpert \cite{Wol}) proved that the Weil--Petersson K\"ahler metric on $\T(S)$ extends in an appropriate sense to its augmentation $\overline{\T(S)}$ and can be identified with the Weil--Petersson metric on the lower dimensional strata $\T(S_\g)$. We would like to know how this fact together with Proposition~\ref{coordlimit} translate in terms of the expression of the Weil--Petersson Poisson structures on $\T(S)$ and $\T(S_\g)$ in the shear coordinates associated to $\la$ and $\la_\g$ respectively. To do so we use Lemma \ref{homological} given below, which is a homological interpretation of the Weil--Petersson structure as described for example in \cite{BoLiu}. Proposition~\ref{WPcoeff} can then be interpreted as a topological translation of the result of Masur.

Let $\s=(\s_{ij})_{ij}$ be the matrix of coefficients of the Weil--Petersson Poisson structure on $\widetilde{\T}(S)$ in the coordinates $(\log x_1,\ldots,\log x_n)$ associated to an ideal triangulation $\la=\left\{\la_1,\ldots,\la_n\right\}$, as was described in Section~\ref{WP}. Setting $\s(\la_i,\la_j)=\s_{ij}$, $\s$ can be identified with an antisymmetric bilinear form on $\h(\la,\Z)\cong\Z^n$, the free abelian group generated over the set of edges of $\la$. We are going to use a homological interpretation of $\s$ as given for example in \cite{BoLiu}. This formulation follows \cite{Bo1} where it is used to describe the Thurston symplectic form.

Let $G$ be the dual graph of $\la$ and $\widehat{G}$ be the oriented graph obtained from $G$ by keeping the same vertex set and replacing each edge of $G$ by two oriented edges which have the same endpoints as the original edge but with opposite orientations.

There is a unique way to thicken $\widehat{G}$ into a surface $\widehat{S}$ such that:
\begin{enumerate}
\item $\widehat{S}$ deformation retracts to $\widehat{G}$;
\item as one goes around a vertex $\widehat{v}$ of $\widehat{G}$ in $\widehat{S}$, the orientation of the edges of $\widehat{G}$ ending at $\widehat{v}$ points alternatively toward and away from $\widehat{v}$;
\item the natural projection $p\colon\widehat{G}\to G$ extends to a 2-fold cover $\widehat{S}\to S$, branched along the vertex set of $\widehat{G}$.
\end{enumerate}

Let $\eta\colon\widehat{S}\to\widehat{S}$ be the covering involution of the branched cover $p\colon\widehat{S}\to S$.

\begin{lem}
\label{homological}
The group $\h(\la,\Z)$ can be identified with the subgroup of $H_1(\widehat{S})$ consisting of those $\widehat{\alpha}$ such that $\eta_{*}(\widehat{\alpha})=-\widehat{\alpha}$. In addition, if $\alpha$, $\beta\in\h(\la,\Z)$ correspond to $\widehat{\alpha}$, $\widehat{\beta}\in H_1(\widehat{S})$, then $\s(\alpha,\beta)=\widehat{\alpha}\cdot\widehat{\beta}$, their algebraic intersection number.
\end{lem}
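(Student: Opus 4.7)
My plan is to construct explicit antisymmetric $1$-cycles $\widehat\alpha_i\in H_1(\widehat S)$ associated to each edge $\la_i$ of $\la$, verify that they form a basis for the $(-1)$-eigenspace of $\eta_*$, and then compute the intersection pairing by a local analysis triangle by triangle.

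The first step is the construction of $\widehat\alpha_i$. Each edge $\la_i$ is dual to an edge $e_i$ of $G$ joining the centers of its two adjacent triangles. The two oriented lifts $\widehat e_i^+,\widehat e_i^-$ of $e_i$ in $\widehat G$ are permuted by $\eta$, and combined with the branching structure of $\widehat S\to S$ at the two endpoint triangles they assemble into a closed loop $\widehat\alpha_i$ in $\widehat S$ that projects to a small loop crossing $\la_i$ twice in $S$. Because $\eta$ locally exchanges the two sheets of the branched cover around each triangle, one gets $\eta_*[\widehat\alpha_i]=-[\widehat\alpha_i]$ in $H_1(\widehat S)$. Extending by linearity yields a homomorphism $\Phi\colon\h(\la,\Z)\to H_1(\widehat S)^{-\eta}$.

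To show $\Phi$ is an isomorphism, I would compute $H_1(\widehat S)$ using the deformation retraction to $\widehat G$ from condition (1) and decompose the resulting cellular chain complex under the action of $\eta$. Condition (2) forces the $(-1)$-eigenspace at the chain level to have exactly $n$ generators, one from each edge of $\la$, while the $(-1)$-part of the boundary vanishes (since $\eta$ fixes the vertices of $\widehat G$), so the antisymmetric cycles coincide with the antisymmetric chains and $H_1(\widehat S)^{-\eta}\cong\Z^n$ with basis $\{[\widehat\alpha_i]\}$.

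For the intersection formula, I would homotope representatives of $\widehat\alpha_i$ and $\widehat\alpha_j$ so that all their intersection points lie in small neighborhoods of the vertices of $\widehat G$, i.e., near the triangles of $\la$. At each triangle $T$, the signed intersection number is determined by the cyclic ordering of the edges of $\la$ around $T$, together with the orientation conventions of the branched cover. A direct local computation should show that a spike at which $\la_i$ lies immediately to the left and $\la_j$ to the right contributes $+1$ to $\widehat\alpha_i\cdot\widehat\alpha_j$, and the opposite configuration contributes $-1$. Summing over all triangles and spikes then gives $\widehat\alpha_i\cdot\widehat\alpha_j=a_{ij}-a_{ji}=\s_{ij}$. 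The main obstacle will be precisely this last sign computation: one must carefully track how the branching structure of $\widehat S\to S$ and condition (2) combine to produce intersection signs that exactly match the combinatorial definition of $\s_{ij}$ as a signed spike count. Once the local model at a single triangle is settled, the isomorphism statement and the global intersection formula assemble routinely.
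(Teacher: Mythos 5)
Your outline is correct and follows the same route as the proof this lemma actually relies on: the paper gives no argument of its own here but simply cites Lemmas 6 and 7 of Bonahon--Liu, and those are proved exactly as you describe --- identifying $\h(\la,\Z)$ with the $\eta$-anti-invariant part of $H_1(\widehat{S})$ via the doubled dual graph and computing the pairing locally at the branch points, where the six edge-germs at a vertex occur in cyclic order $a,b,c,a,b,c$ and each $\widehat{e}_i$ joins antipodal germs, so each triangle containing both $\la_i$ and $\la_j$ contributes $\pm1$ according to the spike orientation. The only step you leave unverified, the local sign computation, is indeed where condition (2) (the alternating in/out structure) enters rather than in the rank count (which follows already from $\eta$ sending each oriented lift to its partner with reversed orientation), and it works out as you predict.
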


\begin{proof}
cf. Lemma 6 and 7 in \cite{BoLiu}.
\end{proof}

The identification of Lemma~\ref{homological} is given as follows: if $e_i\in\h(\la,\Z)$ is the element associating weight 1 to the edge $\la_i$ and weight 0 to the other edges, $\widehat{e}_i$ is the lift in $\widehat{G}$ of the edge of $G$ dual to $\la_i$. The closed curve $\widehat{e}_i$ comes with a natural orientation given by the one on $\widehat{G}$, and we identify $\widehat{e}_i$ with its homology class in $H_1(\widehat{S})$. More generally, to $\alpha=\sum\alpha_i e_i$, we can then associate the homology class $\widehat{\alpha}=\sum\alpha_i \widehat{e}_i$.

Suppose now that $\tau=(\tau_{ij})_{ij}$ is the matrix of coefficients of the Weil--Petersson Poisson structure on $\widetilde{\T}(S_\g)$ for the induced ideal triangulation $\la_\g=\left\{\mu_1,\ldots,\mu_n\right\}$. We suppose that, for $i=1,\ldots,n$, $\mu_i$ corresponds to the homotopy class of  $k_{ij}$ segments from $\la_j$ for $j=1,\ldots,n$. We let $\mathbf{k}_i=(k_{i1},\ldots,k_{in})\in\Z^n$ and identify $\s$ with a bilinear form on $\Z^n\cong\h(\la,\Z)$. The following proposition relates the entries of $\tau$ and $\s$.

\begin{prop}
\label{WPcoeff}
With the notations above, the coefficients of the Weil--Petersson Poisson structure on $\T(S)$ and $\T(S_\g)$ for $\la$ and $\la_\g$ respectively are related via the following formula: for $i,j=1\ldots n$,
\begin{align*}
\tau_{ij}=\s(\mathbf{k}_i,\mathbf{k}_j)=\sum_{s,t}k_{is}k_{jt}\s_{st}.
\end{align*}
\end{prop}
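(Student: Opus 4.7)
The plan is to apply Lemma~\ref{homological} twice, once to $(S,\la)$ and once to $(S_\g,\la_\g)$, in order to re-express both sides of the identity as algebraic intersection numbers of $1$-cycles. Let $\widehat{S}$ and $\widehat{S_\g}$ denote the branched double covers associated to the dual graphs of $\la$ and $\la_\g$, and let $\widehat{e}_s\in H_1(\widehat{S})$ and $\widehat{\epsilon}_i\in H_1(\widehat{S_\g})$ be the closed curves associated to the edges $\la_s$ and $\mu_i$. Lemma~\ref{homological} then gives $\s_{st}=\widehat{e}_s\cdot\widehat{e}_t$ and $\tau_{ij}=\widehat{\epsilon}_i\cdot\widehat{\epsilon}_j$, so by bilinearity of the intersection pairing it suffices to exhibit a topological identification under which $\widehat{\epsilon}_i$ corresponds to $\widehat{\mathbf{k}_i}:=\sum_j k_{ij}\widehat{e}_j$ with matching intersection numbers.

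The bridge between the two covers comes from Remark~\ref{process}: every triangle of $\la_\g$ is identified with the type~$1$ piece of a unique triangle of $\la$, and each edge $\mu_i$ is obtained by collapsing a maximal chain of bigons of $\la\cap S_\g$ passing through $k_{ij}$ segments of $\la_j$. Dually, this produces a map $G_\g\to G$ which is injective on vertices and replaces the edge of $G_\g$ dual to $\mu_i$ with a path of $\sum_j k_{ij}$ edges of $G$, of which exactly $k_{ij}$ are dual to $\la_j$. I would lift this to a map of oriented doubles $\widehat{G_\g}\to\widehat{G}$ compatible with the alternation condition on orientations defining the thickenings, and then thicken to an embedding $\iota\colon\widehat{S_\g}\hookrightarrow\widehat{S}$ after isotoping the branch points of $\widehat{S}$ lying over type~$1$ pieces onto those of $\widehat{S_\g}$. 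Under $\iota_*$, the class $\widehat{\epsilon}_i$ becomes $\widehat{\mathbf{k}_i}$.

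Given $\iota$, the intersection computation localises at branch points. Intersections of $\widehat{\epsilon}_i$ and $\widehat{\epsilon}_j$ in $\widehat{S_\g}$ occur at triangles of $\la_\g$, which match a subset of the intersection points of $\widehat{\mathbf{k}_i}$ and $\widehat{\mathbf{k}_j}$ in $\widehat{S}$. The remaining contributions in $\widehat{S}$ arise from triangles of $\la$ whose bigon portion is absorbed into a bigon chain: at such a triangle the two sides of $\la$ flanking the cut-off spike both belong to a single $\mu_l$, so the corresponding contribution to $\sum_{s,t}k_{is}k_{jt}\s_{st}$ comes in antisymmetric pairs that cancel once $\s_{st}=a_{st}-a_{ts}$ is taken into account, while the new spikes of $\la_\g$ at the punctures $v'$ and $v''$ are accounted for by the endpoints of the bigon chains composing each $\mu_i^*$.

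The main obstacle is making the embedding $\iota$ and the ensuing intersection comparison rigorous. Specifically, one must verify that the branched-cover structure on $S_\g$ obtained by restricting $\widehat{S}$ agrees up to isotopy with the one produced directly from $\la_\g$, that the lift to oriented doubles respects the alternation of orientations at every vertex of $G_\g$, and that the cancellations at triangles subdivided by $\g$ combine precisely with the new-puncture spikes to yield the required equality of intersection numbers rather than leaving spurious remainders.
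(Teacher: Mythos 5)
Your proposal follows essentially the same route as the paper: apply Lemma~\ref{homological} on both surfaces, build an embedding of the cover of $S_\g$ into $\widehat{S}$ from the collapse of bigon chains, and reduce everything to the key homological identity $\iota_*\widehat{\epsilon}_i=\sum_j k_{ij}\widehat{e}_j$. The only place you overcomplicate matters is the final intersection comparison: the paper realizes $\widehat{S_\g}$ literally as the open subsurface $p^{-1}(S_\g)=\widehat{S}\smallsetminus\widehat{\g}$ (choosing $\g$ to avoid the branch points and using the dual graph of $\la\smallsetminus\g$ as an intermediary), so the equality $\widehat{\epsilon}_i\cdot\widehat{\epsilon}_j=\iota_*\widehat{\epsilon}_i\cdot\iota_*\widehat{\epsilon}_j$ is automatic from the locality of the algebraic intersection pairing under inclusion of an open subsurface, and none of the bookkeeping of cancellations at subdivided triangles or at the new-puncture spikes that you worry about in your last paragraph is actually needed.
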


\begin{proof}
Let $G$ be the graph dual to $\la$ in $S$, $G_\g$ be the graph dual to $\la_\g$ and $G'_\g$ the graph dual to $\la\smallsetminus\g$ in $S_\g$. We recall that $\la\smallsetminus\g$ denotes the family of arcs obtained from the edges of $\la$ intersected with $S_\g$, where we \emph{do not} consider the arcs up to homotopy. It decomposes $S_\g$ into triangles and bigons, hence the vertices of $G'_\g$ are either bivalent or trivalent. We will call a \emph{maximal chain} in $G'_\g$ any chain of edges connected via bi-valent vertices and with endpoints at trivalent vertices. In particular, edges connecting trivalent vertices \emph{are} maximal chains. Following Remark~\ref{process}, the maximal chains of $G'_\g$ are in on-to-one correspondence with the edges of $G_\g$, and this defines a natural homeomorphism $G'_\g\cong G_\g$. In addition, the identification of ideal triangles for $\la$ and $\la_\g$ gives an identification of $G$ and $G'_\g$ in a neighborhood of each of their trivalent vertices.

We also consider the oriented graphs $\widehat{G}$ and $\widehat{G}_\g$ (see beginning of section), as well as the oriented graph $\widehat{G}'_\g$ obtained from $G'_\g$ by keeping the same set of trivalent vertices and replacing each maximal chain by two such chains connected to the same (6-valent) endpoints, endowed with opposite orientations. This graph is naturally homeomorphic to $\widehat{G}_\g$. As described above, $\widehat{G}$ thickens into $\widehat{S}$, and both $\widehat{G}_\g$ and $\widehat{G}'_\g$ thicken into $\widehat{S}_\g$. We have covering maps $p\colon\widehat{S}\rightarrow S$ and $p_\g\colon\widehat{S}_\g\rightarrow S_\g$ which restrict to the corresponding graphs.

Recall that, by definition, $S_\g=S\smallsetminus\g\subset S$. Similarly one can identify $\widehat{S}_\g$ with $\widehat{S}\smallsetminus\widehat{\g}$ where $\widehat{\g}=p^{-1}(\g)$. Indeed, let $U\subset S_\g\subset S$ be a union of small discs around the trivalent vertices of $G'_\g$ where it is identified with $G$. By construction, we have $p^{-1}_\g(U)\cong p^{-1}(U)\subset\widehat{S}$. Outside of $U$ both coverings are trivial, so we also have a natural identification $p^{-1}_\g(S_\g\smallsetminus U)\cong p^{-1}(S_\g\smallsetminus U)\subset\widehat{S}$. Hence we obtain that $\widehat{S}_\g\cong p^{-1}(S_\g)\subset\widehat{S}$. Note, in addition, that $\g$ can be chosen so that it doesn't pass through any of the ramification points of $p$ (that is, the vertices of $G$). With this assumption, $p^{-1}(S_\g)=\widehat{S}\smallsetminus\widehat{\g}$, where $p^{-1}(\g)=\widehat{\g}$ consists of two non intersecting multicurves and we can identify $\widehat{S}_\g$ with $\widehat{S}\smallsetminus\widehat{\g}$ sitting in $\widehat{S}$. Accordingly, $p_\g$ is identified with the restriction of $p$ to $S\smallsetminus\g$.

The inclusion $\widehat{\iota}\colon\widehat{S}_\g\hookrightarrow\widehat{S}$ induces a map $\widehat{\iota}_*\colon H_1(\widehat{S}_\g)\to H_1(\widehat{S})$ at the level of homology. By construction, if we denote by $\eta_\g$ the covering involution associated to $p_\g$, we have that $\widehat{\iota}_*\circ\eta_{\g*}=\eta_*\circ\widehat{\iota}_*$.

On the other hand, there is a natural map $\pi\colon G'_\g\to G$ defined by sending each edge of $G'_\g$ onto the edge of $G$ dual to the same edge of the triangulation $\la$, which lifts to a $\eta$-invariant map $\widehat{\pi}\colon\widehat{G}'_\g\to\widehat{G}$. One can then consider retractions $\widehat{r}\colon\widehat{S}\to\widehat{G}$ and $\widehat{r}_\g\colon\widehat{S}_\g\to\widehat{G}'_\g$ such that $\widehat{\pi}\circ\widehat{r}_\g$ is equal to $\widehat{r}\circ\widehat{\iota}$, giving the following commutative diagram:
\begin{align}
\label{diagram1}
\xymatrix{{H_1(\widehat{S}_\g)} \ar[r]^{\widehat{\iota}_*} \ar[d]^{\widehat{r}_{\g *}}_{\wr} & {H_1(\widehat{S})} \ar[d]^{\widehat{r}_*}_{\wr}\\
					{H_1(\widehat{G}'_\g)} \ar[r]^{\widehat{\pi}_*} & {H_1(\widehat{G})}.}
\end{align}

Let $e_i$ be the generator of $\h(\la,\Z)$ assigning weight 1 to $\la_i$ and 0 to the other edges of $\la$, and $f_i$ be the generator of $\h(\la_\g,\Z)$ assigning weight 1 to $\mu_i$ and 0 to the other edges of $\la_\g$. Following Lemma~\ref{homological}, we associate to $e_i$ the homology class $\widehat{e}_i\in H_1(\widehat{S})$ corresponding to the lift in $\widehat{G}$ of the edge of $G$ dual to $\la_i$. To $f_i$ on the other hand, we associate $\widehat{f}_i\in H_1(\widehat{S}_\g)$ which corresponds to the lift in $\widehat{G}'_\g$ of the maximal chain of $G'_\g$ dual to $\mu_i$. By construction, this chain consists of the lift of $k_{is}$ edges dual to $\la_s$ for $s=1,\ldots,n$. Hence, as curves, $\widehat{\pi}(\widehat{f}_i)$ covers $k_{is}$ times each $\widehat{e}_s$. Homologically, using the commutativity of diagram (\ref{diagram1}), we obtain
\begin{align}
\label{pi}
\widehat{\iota}_*(\widehat{f}_i)=\widehat{r}^{-1}_*\circ\widehat{\pi}_*\circ\widehat{r}_{\g*}(\widehat{f}_i)=\sum^n_{s=1} k_{is}\widehat{e}_s\text{ for }i=1,\ldots,n.
\end{align}

Hence we have the following equalities:
\begin{align*}
\tau_{ij}=\widehat{f}_i\cdot\widehat{f}_j&=\widehat{\iota}_*(\widehat{f}_i)\cdot\widehat{\iota}_*(\widehat{f}_j)\\
&=\sum_{s}k_{is}\widehat{e}_s\cdot\sum_{t}k_{jt}\widehat{e}_t\\
&=\sum_{s,t}k_{is}k_{jt}\widehat{e}_s\cdot\widehat{e}_t=\sum_{s,t}k_{is}k_{jt}\s_{st}\,.
\end{align*}

\end{proof}

In terms of shear coordinates and with the notations of Section \ref{extension}, Proposition \ref{WPcoeff} implies that the Poisson brackets associated to the Weil--Petersson structure on $\T(S)$ and $\T(S_\g)$ are related via the formula
\[\left\{\log y_i,\log y_j\right\}_{\T(S_\g)}=\left\{\log(x^{k_{i1}}_1\cdots x^{k_{in}}_n),\log(x^{k_{j1}}_1\cdots x^{k_{jn}}_n)\right\}_{\T(S)},\]
which is consistent with Proposition \ref{coordlimit} and \cite{Mas1,Wol}.



\section{Pinching along curves: quantum aspect}

\subsection{The Chekhov--Fock algebra}

Let $\la=\left\{\la_1,\ldots,\la_n\right\}$ be an ideal triangulation of $S$ and fix a non-zero complex number $q\in\C^*$. Following \cite{Liu1}, we define the \emph{Chekhov--Fock algebra} $\T^q_\la(S)$ of $S$ associated to $\la$ to be the algebra over $\C$ with generators $X^{\pm 1}_i$ associated to the edges $\la_i$ of $\la$ and subject to the relations
\[X_i X_j=q^{2\s_{ij}}X_j X_i\]
for every $i$, $j$, where the $\s_{ij}\in\left\{-2,-1,0,1,2\right\}$ are the coefficients of the Weil--Petersson Poisson structure on $\widetilde{\T}(S)$ in the shear coordinates associated to $\la$. We will sometimes use the notation $\T^q_\la(S)=\C\left[X_1,\ldots,X_n\right]^q_\la$ to specify the generators of the algebra.

If $A$ and $B$ are two monomials in the variables $X_1$, \ldots, $X_n$, then they satisfy a relation of the form $AB=q^{2\alpha}BA$ for some integer $\alpha$, and we will use the notation $\s(A,B)=\alpha$. This coefficient is independent of the order of the generators inside each monomial.

For $\mathbf{k}=(k_1,\ldots,k_n)\in\Z^n$, if $A$ is a monomial consisting of $k_i$ times the generator $X_i$ for i=$1,\ldots,n$, in any given order, we define the following element in $\T^q_\la(S)$:
\[\left[A\right]=X_{\mathbf{k}}=q^{-\sum_{i<j}k_i k_j\s_{ij}}X^{k_1}_1\cdots X^{k_n}_n.
\]
This is known as the \emph{Weyl quantum ordering}. These monomials satisfy the following relations:
\begin{align*}
X_\mathbf{k}X_\mathbf{l}&=q^{\s(\mathbf{k},\mathbf{l})}X_\mathbf{k+l}\\
                        &=q^{2\s(\mathbf{k},\mathbf{l})}X_\mathbf{l}X_\mathbf{k},
\end{align*}
where we once again identify $\s$ with a bilinear form on $\Z^n$. The different notations for $\s$ coincide in the sense that $\s(X_\mathbf{k},X_\mathbf{l})=\s(\mathbf{k},\mathbf{l})$.

In particular, if $\alpha$ is a path between two vertices in the dual graph $G$ of $\la$ (which does not backtrack), we can identify it with the element $\alpha=(\alpha_1,\ldots,\alpha_n)$ of $\h(\la,\Z)$, where $\alpha_i$ is the number of times the path $\alpha$ passes through the edge of $G$ dual to $\la_i$. Then we associate to $\alpha$ the monomial $X_\alpha$ defined as above. If $\beta$ is another path in $G$, we have
\[
X_\alpha X_\beta=q^{2\widehat{\alpha}\cdot\widehat{\beta}}X_\beta X_\alpha
\]
by Lemma~\ref{homological}, where $\widehat{\alpha}$ and $\widehat{\beta}$ are the associated elements in $H_1(\widehat{S})$. Of particular interest will be the element $X_\g$ associated to a simple closed curve $\g$ in $S$, which we identify with its retraction to a cycle in $G$.

If $\Sigma$ is another surface with ideal triangulation $\mu$, a homomorphism between $\T^q_\la(S)$ and $\T^q_\mu(\Sigma)$ doesn't in general preserve the quantum ordering. However, we have the following elementary lemma which will be useful later on.

\begin{lem}
\label{quantumorder}
Let $A_1$, \ldots, $A_s$ be monomials in $\T^q_\la(S)$, $B_1$, \ldots, $B_s$ be  monomials in $\T^q_\mu(\Sigma)$. If $\Psi\colon\T^q_\la(S)\to\T^q_\mu(\Sigma)$ is an algebra homomorphism such that $\Psi(\left[A_i\right])=\left[B_i\right]$ for all $i$, then $\Psi(\left[A_1\cdots A_s\right])=\left[B_1\cdots B_s\right]$.
\end{lem}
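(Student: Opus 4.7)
The plan is to reduce the claim to the algebraic identity, valid in any Chekhov--Fock algebra, that a product of Weyl-ordered monomials is a scalar multiple of the Weyl ordering of their product, where the scalar depends only on the pairwise commutation coefficients $\s(A_i,A_j)$. Once this identity is in hand, the lemma follows by observing that $\Psi$ must preserve these commutation coefficients.

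First I would establish, by induction on $s$, the formula
\[
[A_1][A_2]\cdots[A_s]\;=\;q^{\sum_{i<j}\s(A_i,A_j)}\,[A_1A_2\cdots A_s].
\]
If $\mathbf{k}_i\in\Z^n$ is the exponent vector of $A_i$, then $[A_i]=X_{\mathbf{k}_i}$ and $[A_1\cdots A_s]=X_{\mathbf{k}_1+\cdots+\mathbf{k}_s}$. The case $s=2$ is immediate from the relation $X_{\mathbf{k}}X_{\mathbf{l}}=q^{\s(\mathbf{k},\mathbf{l})}X_{\mathbf{k}+\mathbf{l}}$ established earlier in this subsection, and the inductive step uses the bilinearity $\s(\mathbf{k}_1+\cdots+\mathbf{k}_{s-1},\mathbf{k}_s)=\sum_{i<s}\s(\mathbf{k}_i,\mathbf{k}_s)$. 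Applying the same formula in $\T^q_\mu(\Sigma)$ yields
\[
[B_1][B_2]\cdots[B_s]\;=\;q^{\sum_{i<j}\s(B_i,B_j)}\,[B_1B_2\cdots B_s].
\]

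The key step is to show that $\s(A_i,A_j)=\s(B_i,B_j)$ for every pair $i,j$. Since $\Psi$ is an algebra homomorphism with $\Psi([A_i])=[B_i]$,
\[
[B_i][B_j]=\Psi([A_i][A_j])\quad\text{and}\quad [B_j][B_i]=\Psi([A_j][A_i]).
\]
But $[A_i][A_j]=q^{2\s(A_i,A_j)}[A_j][A_i]$, since this commutation relation depends only on the exponent vectors and not on the ordering of the generators within each monomial. Applying $\Psi$ gives $[B_i][B_j]=q^{2\s(A_i,A_j)}[B_j][B_i]$, and comparing with the defining relation $[B_i][B_j]=q^{2\s(B_i,B_j)}[B_j][B_i]$ in $\T^q_\mu(\Sigma)$ yields $\s(A_i,A_j)=\s(B_i,B_j)$, as desired.

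To finish, apply $\Psi$ to the first displayed formula:
\[
[B_1]\cdots[B_s]=\Psi\bigl([A_1]\cdots[A_s]\bigr)=q^{\sum_{i<j}\s(A_i,A_j)}\,\Psi\bigl([A_1\cdots A_s]\bigr),
\]
and equate with the second displayed formula. Using $\s(A_i,A_j)=\s(B_i,B_j)$ the two scalar factors cancel, leaving $\Psi([A_1\cdots A_s])=[B_1\cdots B_s]$. The only real point to watch is the inductive bookkeeping of commutation scalars in step one; everything else is formal once one notices that a homomorphism between Chekhov--Fock algebras automatically preserves the commutation coefficients of the images of Weyl-ordered monomials.
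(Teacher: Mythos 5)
The paper offers no proof of this lemma (it is merely called ``elementary''), so your argument must stand on its own. Your strategy is the natural one and is surely the intended argument: establish $[A_1]\cdots[A_s]=q^{\sum_{i<j}\s(A_i,A_j)}[A_1\cdots A_s]$ and its counterpart for the $B_i$, then match the scalar prefactors. The induction in your first step is correct --- it uses only the relation $X_{\mathbf{k}}X_{\mathbf{l}}=q^{\s(\mathbf{k},\mathbf{l})}X_{\mathbf{k}+\mathbf{l}}$, the bilinearity of $\s$, and the fact that $[A_1\cdots A_s]=X_{\mathbf{k}_1+\cdots+\mathbf{k}_s}$ depends only on the total exponent vector.

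The step that fails as written is the deduction of the \emph{integer} equality $\s(A_i,A_j)=\s(B_i,B_j)$ from $q^{2\s(A_i,A_j)}=q^{2\s(B_i,B_j)}$. (Cancelling $[B_j][B_i]$ to get that identity of scalars is fine, since monomials in a Chekhov--Fock algebra are invertible, hence nonzero; you should say so.) Equality of powers of $q$ forces equality of the exponents only when $q^2$ is not a root of unity, whereas the case the paper actually uses is $q$ a primitive $N$--th root of unity with $N$ odd. The conclusion you need, however, is only $q^{\s(A_i,A_j)}=q^{\s(B_i,B_j)}$, and this does survive for odd $N$: if $q^{2d}=1$ then $N$ divides $2d$, hence $N$ divides $d$, hence $q^{d}=1$. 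So the fix is to weaken your key step to ``the commutation coefficients agree modulo $N$, so the two scalar prefactors coincide,'' or alternatively to observe that in every application the monomials $B_i=\Psi([A_i])$ are prescribed by $q$--independent combinatorial data, so the integer equality holds outright. The caveat is not cosmetic: for $q$ of even order the statement in this abstract generality can fail by a sign (e.g.\ with $q$ a primitive sixth root of unity one can arrange $\s(A_1,A_2)=1$ and $\s(B_1,B_2)=4$, giving $\Psi([A_1A_2])=q^{3}[B_1B_2]=-[B_1B_2]$); such $q$ are simply excluded in the paper.
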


\subsection{A homomorphism between Chekhov--Fock algebras}

As a direct consequence of Proposition~\ref{coordlimit} and \ref{WPcoeff}, we construct a natural homomorphism between the Chekhov--Fock algebras associated to $S$ and $S_\g$. We recall that, by lemma~\ref{inducedtriang}, $\la$ induces an ideal triangulation $\la_\g=\left\{\mu_1,\ldots,\mu_n\right\}$ of $S_\g$, where $\mu_i$ is the homotopy class in $S_\g$ of $k_{ij}$ segments from $\la_j$, for $j=1,\ldots,n$. We let $\mathbf{k}_i=(k_{i1},\ldots,k_{in})$ for $i=1,\ldots,n$.

\begin{prop}
\label{homom}
The map
\[\Theta^q_{\gamma,\lambda}\colon\mathcal{T}^q_{\lambda_\gamma}(S_\gamma)=\mathbb{C}\left[Y_1,\ldots,Y_n\right]^q_{\lambda_\gamma}\longrightarrow \mathbb{C}\left[X_1,\ldots,X_n\right]^q_\lambda=\mathcal{T}^q_{\lambda}(S)
\]
defined on the generators by 
\[\Theta^q_{\gamma,\lambda}(Y_i)=X_{\mathbf{k}_i}
\]
extends to an algebra homomorphism.
\end{prop}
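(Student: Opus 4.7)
\medskip

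\noindent\textbf{Proof plan.} The Chekhov--Fock algebra $\mathcal{T}^q_{\lambda_\gamma}(S_\gamma)$ is presented by generators $Y_1^{\pm1},\ldots,Y_n^{\pm1}$ subject to the relations $Y_iY_j=q^{2\tau_{ij}}Y_jY_i$, where $(\tau_{ij})$ is the matrix of coefficients of the Weil--Petersson Poisson structure on $\widetilde{\T}(S_\g)$ in the shear coordinates associated to $\la_\g$. By the universal property of this algebra, it suffices to check two things: first, that each proposed image $X_{\mathbf{k}_i}\in\T^q_\la(S)$ is invertible, and second, that these images satisfy the same commutation relations as the $Y_i$.

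The invertibility is immediate: since $\s$ is antisymmetric, $X_{\mathbf{k}_i}X_{-\mathbf{k}_i}=q^{\s(\mathbf{k}_i,-\mathbf{k}_i)}X_0=1$, so $X_{\mathbf{k}_i}^{-1}=X_{-\mathbf{k}_i}$. For the commutation relations, I would apply directly the Weyl-ordering identity recalled in the preceding discussion, namely
\[
X_{\mathbf{k}_i}X_{\mathbf{k}_j}=q^{2\s(\mathbf{k}_i,\mathbf{k}_j)}X_{\mathbf{k}_j}X_{\mathbf{k}_i},
\]
where $\s$ on the right-hand side denotes the bilinear form on $\Z^n\cong\h(\la,\Z)$ defined by the matrix $(\s_{st})$. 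Thus everything reduces to the identity $\s(\mathbf{k}_i,\mathbf{k}_j)=\tau_{ij}$.

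This last identity is precisely the content of Proposition~\ref{WPcoeff}, which was established via the homological interpretation of the Weil--Petersson Poisson bracket and the inclusion $\widehat{\iota}\colon\widehat{S}_\g\hookrightarrow\widehat{S}$ (identity~\eqref{pi}). Combining the two displays gives $X_{\mathbf{k}_i}X_{\mathbf{k}_j}=q^{2\tau_{ij}}X_{\mathbf{k}_j}X_{\mathbf{k}_i}$, exactly matching the defining relation among the $Y_i$'s. Hence the assignment $Y_i\mapsto X_{\mathbf{k}_i}$ extends uniquely to an algebra homomorphism.

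There is no genuine obstacle here: once one observes that the proposition is nothing but the algebraic repackaging of Proposition~\ref{WPcoeff}, the verification is a one-line computation using the Weyl-ordering formula. The conceptual work has already been done in the geometric section, where the matching between $\s$ and $\tau$ was obtained from the homological description of both Poisson structures on $\widehat{S}$ and $\widehat{S}_\g=\widehat{S}\smallsetminus\widehat{\g}$.
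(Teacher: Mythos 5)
Your proposal is correct and follows essentially the same route as the paper: both reduce the claim to the Weyl-ordering relation $X_{\mathbf{k}_i}X_{\mathbf{k}_j}=q^{2\s(\mathbf{k}_i,\mathbf{k}_j)}X_{\mathbf{k}_j}X_{\mathbf{k}_i}$ together with the identity $\s(\mathbf{k}_i,\mathbf{k}_j)=\tau_{ij}$ from Proposition~\ref{WPcoeff}. Your additional remark that $X_{\mathbf{k}_i}^{-1}=X_{-\mathbf{k}_i}$, so the images are units, is a small but legitimate completeness point that the paper leaves implicit.
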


\begin{proof}
We check it on the generators of $\T^q_{\la_\g}(S_\g)$:
\begin{align*}
\Theta^q_{\gamma,\lambda}(Y_i Y_j)&=\Theta^q_{\gamma,\lambda}(Y_i)\Theta^q_{\gamma,\lambda}(Y_j)\\
    &=X_{\mathbf{k}_i}X_{\mathbf{k}_j}\\
    &=q^{2\s(\mathbf{k}_i,\mathbf{k}_j)}X_{\mathbf{k}_j}X_{\mathbf{k}_i}\\
    &=q^{2\tau_{ij}}\Theta^q_{\gamma,\lambda}(Y_j Y_i),
\end{align*}
where the last equality is given by Proposition~\ref{WPcoeff}.
\end{proof}

The following lemma states that, if we pinch the curves constituting $\g$ in different orders, the resulting homomorphisms given by Proposition~\ref{homom} are the same.

\begin{lem}
\label{composition}
Consider any sequence of integers $i_1$, \ldots, $i_k$ such that\\ $\{i_1,\ldots,i_k\}=\{1,\ldots,k\}$ and let $\g^l=\cup^l_{j=1}\g_{i_j}$ for $l\leq k$. Then
\[
\Theta^q_{\g,\la}=\Theta^q_{\g_{i_k},\la_{\g^{k-1}}}\circ\Theta^q_{\g_{i_{k-1}},\la_{\g^{k-2}}}\circ\cdots\circ\Theta^q_{\g_{i_2},\la_{\g^1}}\circ\Theta^q_{\g_{i_1},\la}.
\]
\end{lem}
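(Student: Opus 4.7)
The plan is to reduce the statement to a check on generators, and then to translate the iterated pinching into a purely combinatorial statement about how the maximal chains of arcs decompose at each stage. Both sides of the claimed equality are algebra homomorphisms between the same two algebras, so it suffices to show that for every generator $Y_i$ of $\T^q_{\la_\g}(S_\g)$ corresponding to an edge $\mu_i$ of $\la_\g$, the images agree.

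First I would record the key combinatorial fact underlying Proposition~\ref{homom}: an edge $\mu_i$ of $\la_\g$ is the homotopy class (in $S_\g$) of a maximal chain of bigons in the decomposition of $S_\g$ by $\la\smallsetminus\g$, and the multiplicities $k_{ij}$ count how many segments from each $\la_j$ appear in that chain. If instead we pinch only $\g^l=\g_{i_1}\cup\cdots\cup\g_{i_l}$, the maximal chain describing $\mu_i$ in $S_\g$ breaks up on the intermediate surface $S_{\g^l}$ into a sequence of sub-chains, each of which is itself a maximal chain for $\la_{\g^l}\smallsetminus(\g\smallsetminus\g^l)$ in $S_{\g^l}$, and thus represents an edge of $\la_{\g^l}$. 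This decomposition is associative and order-independent, essentially because the multicurve $\g$ is disjoint: removing the $\g_i$ one at a time only refines the collection of bigons cut out by $\la\cap S_{\g^l}$, and the trivalent endpoints of the maximal chains remain the same throughout. As a consequence, if $\mu_i$ in $S_\g$ corresponds to a multi-index $\mathbf{k}_i$ over $\la$, and to edges $\nu^{(j)}_1,\ldots,\nu^{(j)}_{r_j}$ of $\la_{\g^j}$ with multi-indices $\mathbf{k}^{(j)}_{1},\ldots,\mathbf{k}^{(j)}_{r_j}$, then $\mathbf{k}_i=\sum_{s}\mathbf{k}^{(j)}_{s}$.

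Next I would evaluate the right-hand composition on $Y_i$, applying the homomorphisms one at a time from right to left. The first map $\Theta^q_{\g_{i_k},\la_{\g^{k-1}}}$ sends $Y_i$ to a Weyl-ordered monomial $[Z_1\cdots Z_{r_{k-1}}]$ in the generators of $\T^q_{\la_{\g^{k-1}}}(S_{\g^{k-1}})$ associated to the sub-chains of $\mu_i$ on $S_{\g^{k-1}}$. Each subsequent map then refines these sub-chains further, and by definition sends each generator $Z_s$ to a Weyl-ordered monomial in the generators of the next algebra. By iterated application of Lemma~\ref{quantumorder}, the whole composition sends $Y_i$ to a single Weyl-ordered monomial in the $X_j$'s whose exponent vector is the sum of the exponent vectors produced at each stage. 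By the combinatorial identity above, this sum equals $\mathbf{k}_i$, so the image is $X_{\mathbf{k}_i}=\Theta^q_{\g,\la}(Y_i)$.

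The main obstacle is the first step: verifying that the maximal-chain decomposition really does refine coherently under iterated pinching, so that the multiplicities add up stage-by-stage to the final $\mathbf{k}_i$. Once that combinatorial fact is in hand, Lemma~\ref{quantumorder} handles all the bookkeeping of the quantum ordering coefficients uniformly, and no direct computation with the $q$-powers is required.
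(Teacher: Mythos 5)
Your proposal is correct and follows essentially the same route as the paper: the paper's proof likewise observes that the edges $\mu_i$ of $\la_\g$, as homotopy classes of segments of $\la$, do not depend on the order in which the $\g_{i_j}$ are pinched — so both sides send each generator to the same monomial up to ordering — and then invokes Lemma~\ref{quantumorder} to see that the Weyl quantum orderings agree. Your write-up simply spells out the combinatorial refinement of maximal chains in more detail than the paper does.
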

\begin{proof} The definition of the edges $\mu_i$ of $\la_\g$ as homotopy classes of segments from $\la_1$, \ldots, $\la_n$ does not depend on the order in which the curves $\g_1$, \ldots, $\g_k$ are pinched. Hence the generators $Y_i$ of $\T^q_\la(S_\g)$ are sent to the same monomials by the two maps, \emph{up to ordering}. Since all the maps considered send generators to quantum ordered monomials, Lemma~\ref{quantumorder} implies that the quantum orders are respected on each side and hence the maps coincide.
\end{proof} 

\begin{rem}
\label{theta}
For future reference, we want to interpret $\Theta^q_{\g,\la}$ in terms of dual graphs. As in the proof of Proposition~\ref{WPcoeff}, we let $G$ be the graph dual to $\la$ in $S$ and $G'_\g$ be the graph dual to $\la\smallsetminus\g$ in $S_\g$. There is a natural map $\pi\colon G'_\g\to G$ defined by sending each edge of $G'_\g$ onto the edge of $G$ dual to the same edge of $\la$. If $\alpha$ is any path between trivalent vertices in $G'_\g$, we denote by $Y_\alpha\in\T^q_{\la_\g}(S_\g)$ the quantum  ordered product of generators associated to the edges crossed by $\alpha$ (after making the identification $G'_\g\cong G_\g$ with the graph dual to $\la_\g$), and we define similarly $X_\beta\in\T^q_\la(S)$ for $\beta$ any path between vertices in $G$. Then, by definition of $\Theta^q_{\g,\la}$, we have
\[\Theta^q_{\g,\la}(Y_\alpha)=X_{\pi(\alpha)}.\]
\end{rem}

\section{Application to the representation theory of Chekhov--Fock algebras}

\subsection{Representations of the Chekhov--Fock algebras}

The irreducible finite dimensional representations of $\T^q_\la(S)$ for $q$ a root of unity have been studied in details in \cite{BoLiu}. We will recall the main results which will be needed for our purpose.

An important step is to describe the center of these algebras. For $i=1,\ldots,s$, consider the element
\[
P_i=X_{\mathbf{p}_i}=\left[X^{p_{i1}}_1 X^{p_{i2}}_2\cdots X^{p_{in}}_n\right]
\]
of $\T^q_\la(S)$ associated to the puncture $v_i$ of $S$, where $p_{ij}\in\left\{0,1,2\right\}$ is the number of ends of the edge $\la_j$ that converge to $v_i$ and $\mathbf{p}_i=(p_{i1},\ldots,p_{in})$. Namely, $P_i$ is the quantum ordered product of generators associated to the edges ending at $v_i$. If $\alpha_i$ is a small loop around $v_i$ and we identify it with its retraction to a cycle in the dual graph $G$, we also have $P_i=X_{\alpha_i}$, with the notations introduced previously.

In addition, let $\mathbf{h}=(1,\ldots,1)$ and
\[H=X_\mathbf{h}=\left[X_1\cdots X_n\right].
\]
The following proposition describes the center of the Chekhov--Fock algebras for certain non-generic values of the parameter $q$.

\begin{prop}[Bonahon--Liu]
If $q^2$ is a primitive $N$--th root of unity with $N$ odd, the center $\mathcal{Z}^q_\la$ of $\T^q_\la(S)$ is generated by the elements $X^N_i$ for $i=1,\ldots,n$, the $P_j$ for $j=1,\ldots,s$ and $H$.
\end{prop}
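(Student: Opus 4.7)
\medskip
\noindent\textbf{Proof plan.} The strategy is to pass to the Weyl basis and reduce the statement to a question about an integer lattice defined by $\s$ modulo $N$. Since $\{X_\mathbf{k}\}_{\mathbf{k}\in\Z^n}$ is a $\C$-basis of $\T^q_\la(S)$ and $X_\mathbf{k}X_\mathbf{l}=q^{2\s(\mathbf{k},\mathbf{l})}X_\mathbf{l}X_\mathbf{k}$, an element $z=\sum_\mathbf{k}c_\mathbf{k}X_\mathbf{k}$ satisfies $zX_i=X_iz$ if and only if $c_\mathbf{k}(q^{2\s(\mathbf{k},\mathbf{e}_i)}-1)=0$ for every $\mathbf{k}$. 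Because $q^2$ has exact order $N$, this forces $c_\mathbf{k}=0$ unless $\s(\mathbf{k},\mathbf{e}_i)\equiv 0\pmod N$ for each $i$. Thus the center has $\C$-basis $\{X_\mathbf{k}\}_{\mathbf{k}\in L}$, where
\[
L=\bigl\{\mathbf{k}\in\Z^n:\s(\mathbf{k},\cdot)\equiv 0\pmod N\bigr\}.
\]
It remains to show that $L$ is generated as an abelian group by the vectors $N\mathbf{e}_i$, the $\mathbf{p}_j$, and $\mathbf{h}=(1,\ldots,1)$, since the corresponding monomials are exactly $X_i^N$, $P_j$, and $H$.

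\medskip
\noindent The containment $\supset$ is the easy part. The $N\mathbf{e}_i$ are in $L$ because $\s$ is integer-valued. The vector $\mathbf{h}$ lies in $\ker\s$: for any edge $\la_i$, each of the four spikes adjacent to $\la_i$ contributes once to $\sum_j a_{ji}$ or to $\sum_j a_{ij}$, and these contributions balance to give $\sum_j\s_{ji}=0$. Similarly, each $\mathbf{p}_j$ lies in $\ker\s$; this is the combinatorial shadow of the geometric fact (Section~\ref{WP}) that $\log p_j$ is a Casimir for the Weil--Petersson Poisson structure. In particular all three families sit inside $L$.

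\medskip
\noindent The substantive step is the reverse containment. I would use the homological model of Lemma~\ref{homological}: identify $\mathcal{H}(\la,\Z)$ with the $(-1)$-eigenspace of $\eta_*$ on $H_1(\widehat S)$ and $\s$ with the restricted intersection form. The classes $\widehat{\mathbf{p}}_j$ are represented by the small loops in $\widehat S$ around the preimages of the punctures $v_j$, and by Poincaré--Lefschetz duality these classes, together with $\widehat{\mathbf{h}}$, span the rational kernel of the intersection pairing on the $(-1)$-eigenspace. The integral refinement is controlled by the Smith normal form of $\s$: one must show that the invariant factors of $\s$ (on the quotient by its rational kernel) are all coprime to $N$. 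These invariant factors arise from $2$-torsion in the cokernel of the transfer $\widehat{\iota}_*$ associated to the branched double cover $\widehat S\to S$, so they divide a power of $2$; the hypothesis that $N$ is odd then makes them invertible modulo $N$, which implies $L=\ker\s+N\Z^n$. Combined with the linear relation $\sum_j\mathbf{p}_j=2\mathbf{h}$, which is the reason $\mathbf{h}$ must appear alongside the $\mathbf{p}_j$ in any integral generating set of $\ker\s$, this gives the desired generation of $L$.

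\medskip
\noindent The principal obstacle is the integral lattice step: it is not enough to know $\ker\s\otimes\Q$, one needs precise control over the $2$-primary part of the cokernel of $\s$. The homological interpretation via the branched cover $\widehat S\to S$ is what makes this tractable, and the role of the parity condition on $N$ is precisely to prevent any $2$-torsion contributions from producing additional central monomials beyond those listed.
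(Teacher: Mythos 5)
The paper does not actually prove this proposition; it is quoted from Bonahon--Liu \cite{BoLiu}, so there is no in-paper argument to compare against. Judged on its own terms, your reduction is correct and is indeed the standard route: the Weyl monomials $X_{\mathbf{k}}$ form a basis, centrality of $\sum c_{\mathbf{k}}X_{\mathbf{k}}$ is equivalent to the support lying in $L=\{\mathbf{k}:\s(\mathbf{k},\cdot)\equiv 0 \pmod N\}$, and the inclusion $N\Z^n+\langle \mathbf{p}_1,\ldots,\mathbf{p}_s,\mathbf{h}\rangle\subseteq L$ is verified correctly (the spike-counting argument for $\mathbf{h}\in\ker\s$ and the Casimir property of the $\mathbf{p}_j$ are both fine, as is the observation $\sum_j\mathbf{p}_j=2\mathbf{h}$ explaining why $H$ is not redundant).

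The gap is in the reverse inclusion, which is the entire content of the proposition. You correctly isolate the two facts needed --- that $\langle\mathbf{p}_j,\mathbf{h}\rangle$ exhausts $\ker_{\Z}\s$ up to index a power of $2$, and that the nonzero invariant factors of $\s$ are powers of $2$ so that $L=\ker_{\Z}\s+N\Z^n$ when $N$ is odd --- but you do not prove either. The sentence asserting that the invariant factors ``arise from $2$-torsion in the cokernel of the transfer'' is a heuristic, not an argument: nothing in your write-up controls the intersection form on the $\eta$-anti-invariant sublattice of $H_1(\widehat S)$, identifies the anti-invariant boundary classes with the $\widehat{\mathbf{p}}_j$ (and computes that their span has the right rank $s$), or bounds the index of the anti-invariant lattice in terms of the invariant/anti-invariant splitting of $H_1(\widehat S)$, which only holds after inverting $2$. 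These are precisely the lemmas Bonahon--Liu prove (the homological model of Lemma~\ref{homological} together with their analysis of the sublattice and its intersection form), and without them the claim that every $\mathbf{k}$ with $\s(\mathbf{k},\cdot)\equiv 0\pmod N$ decomposes as stated is unestablished. A correct completion would either carry out that lattice computation on $\widehat S$ or verify directly that the Smith normal form of the explicit matrix $\s$ has all invariant factors in $\{1,2,4\}$ --- as it stands, the proposal is a sound plan with its central step missing.
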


A complete classification of the finite dimensional irreducible representations of $\T^q_\la(S)$ when $q^2$ is a root of unity was obtained in \cite{BoLiu}. A particular case of this classification can be summarized by the following theorem. Note the further restriction to a condition on $q$ instead of $q^2$.

\begin{thm}[Bonahon--Liu]
\label{BoLiu1}
Suppose that $q$ is a primitive $N$--th root of unity with $N$ odd. Every irreducible finite dimensional representation \\ $\rho\colon\T^q_\la(S)\to\End(V)$ has dimension $N^{3g+s-3}$ and is determined completely by its restriction to the center $\mathcal{Z}^q_\la$ of $\T^q_\la(S)$.

In particular, given $p_j\in\left\{0,\ldots,N-1\right\}$  integers labelling the punctures $v_j$ of $S$ and $m\in\T(S)$ with shear parameters $(x_1,\ldots,x_n)$ associated to $\la$, there is a finite dimensional irreducible representation $\rho\colon\T^q_\la(S)\to\End(V)$ such that:
\begin{itemize}
\item $\rho(X^N_i)=x_i \Id_V$ for $i=1,\ldots,n$;
\item $\rho(P_j)=q^{p_j}\Id_V$ for $j=1,\ldots,s$.
\end{itemize}
These conditions determine $\rho$ uniquely up to isomorphism.
\end{thm}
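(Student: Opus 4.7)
My plan is to follow the general strategy for finite-dimensional representations of quantum tori at roots of unity, tailored to the topological data encoded in $\sigma$.

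First I would verify that $X_i^N$, $P_j = X_{\mathbf{p}_j}$ and $H = X_\mathbf{h}$ all lie in the center. For $X_i^N$ this is immediate because $q^{2N\sigma_{ij}}=1$. For $P_j$ and $H$ I would use the homological interpretation from Lemma~\ref{homological}: it suffices to check that the corresponding anti-invariant classes in $H_1(\widehat{S})$ have trivial algebraic intersection with every $\widehat{e}_i$. The loop $\widehat{\alpha}_j$ lifts a small curve around $v_j$ and bounds a punctured disc, so this is automatic; for $\widehat{h} = \sum_i \widehat{e}_i$ a direct count of spikes yields $\sum_j \sigma_{ji}=0$ for every $i$.

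The heart of the proof is a lattice computation. A Weyl-ordered monomial $X_\mathbf{k}$ is central iff $\mathbf{k}$ lies in the radical of $\sigma$ modulo $N$. The Weil--Petersson Poisson structure has degeneracy rank exactly $s$ (its Casimirs are the $\log p_j$, one per puncture; cf.~\S\ref{WP}), and the $\mathbf{p}_j$ furnish an integral basis of $\ker\sigma$ over $\Q$. The relation $2\mathbf{h}=\sum_j \mathbf{p}_j$ together with $N$ odd (so that $2$ is invertible mod $N$) shows that the $\mathbf{p}_j$ already span $\ker\sigma \pmod N$, while $H$ is still required to generate the center over $\Z$. Hence the non-degenerate rank of $\sigma$ mod $N$ is $n - s = 6g+2s-6$, and standard quantum torus theory at a root of unity gives that $\T^q_\la(S)$ is Azumaya of PI-degree $N^{3g+s-3}$ over the spectrum of its center. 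Consequently every irreducible representation has dimension $N^{3g+s-3}$ and is determined up to isomorphism by its central character.

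For the existence statement, given $x_i>0$ and weights $p_j$ I would define a central character by $X_i^N \mapsto x_i$ and $P_j \mapsto q^{p_j}$; the value of $\rho(H)$ is then forced (for $N$ odd) by the relation expressing $\mathbf{h}$ in terms of the $\mathbf{p}_j$ modulo $N$. An explicit realization of the representation can be produced by choosing a symplectic basis of $\h(\la, \Z/N)/\ker\sigma$, grouping the resulting generators into Heisenberg pairs, and acting by standard clock-and-shift operators on $(\C^N)^{\otimes(3g+s-3)}$. The main obstacle I anticipate is the lattice step: rigorously establishing that the radical of $\sigma$ is spanned by the puncture classes and pinning down the integral structure of the center. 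This requires a careful analysis of the branched double cover $\widehat{S}$ and its anti-invariant homology, which is where the combinatorics of the triangulation enters in an essential way.
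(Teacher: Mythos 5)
Your proposal reconstructs a proof of a result that the paper does not actually prove: the paper's ``proof'' of Theorem~\ref{BoLiu1} is a two-line citation to Theorems 20 and 21 of \cite{BoLiu}, supplemented only by the observation that, for shear parameters of a complete metric (all $p_j$--monomials equal to $1$), the value of $\rho(H)$ is forced by $\rho(H^N)=\Id_V$ and $H^2=P_1\cdots P_s$. Your sketch is essentially the strategy that Bonahon and Liu themselves follow (quantum torus at a root of unity, center controlled by the radical of $\sigma$ modulo $N$, dimension equal to $N$ to the half-rank, explicit clock-and-shift models), so in that sense it is the ``right'' proof; what it buys over the paper is self-containedness, at the cost of having to do the real work. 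Two points deserve attention. First, the step you yourself flag is the genuine gap: knowing that the $\mathbf{p}_j$ span $\ker\sigma$ over $\Q$ is not enough --- you need the rank of $\sigma$ modulo $N$ to be exactly $n-s$, i.e.\ control of the elementary divisors of $\sigma$ (equivalently, that $\sigma$ descends to a unimodular symplectic form on $\h(\la,\Z)/\mathrm{rad}$), and this is precisely the content of the homological analysis in \cite{BoLiu} that cannot be waved through. Second, your parenthetical about $H$ is backwards: since $N$ is odd, $H^{N+1}=(H^2)^{(N+1)/2}$ expresses $H$ as a product of the $P_j$ and the $X_i^{\pm N}$, so $H$ is \emph{redundant} as a generator of the center in this case (it is listed in the Proposition for uniformity); this is also exactly why $\rho(H)$ is determined in the existence statement, in agreement with the remark the paper does make.
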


\begin{proof}
The first part is given by Theorem 20 in  \cite{BoLiu}. The second part is a specialization of Theorem 21 to the case where the $x_i$ are positive real numbers and $x^{p_{j1}}\cdots x^{p_{jn}}=1$ for $j=1,\ldots,s$, corresponding to the shear parameters of a complete hyperbolic metric on $S$. In this case $\rho(H)$ is completely determined by the fact that $\rho(H^N)=\Id_V$ and $H^2=P_1\cdots P_s$.
\end{proof}

We call $\mathbf{p}=(p_1,\ldots,p_s)$ the \emph{weights} of the representation $\rho$ associated to the $s$--tuple of punctures $(v_1,\ldots,v_s)$. Depending on the context we will use the notations $\rho_m$ or $\rho^{\mathbf{P}}_m$ to emphasize the dependence of an irreducible representation on its associated weights and metric.

\begin{rem}
\label{dimension}
Theorem~\ref{BoLiu1} generalizes directly to the case of a disconnected surface $S=S_1\sqcup S_2$, except for the statement about the dimensions. More precisely, if $\la=\la_1\cup\la_2$ is a triangulation of $S$, then there is a natural isomorphism $\T^q_\la(S)\cong\T^q_{\la_1}(S_1)\otimes\T^q_{\la_2}(S_2)$, since the generators associated to $\la_1$ and $\la_2$ commute with each other. Hence an irreducible representation of $\T^q_\la(S)$ is a tensor product of irreducible representations $\rho_1\otimes\rho_2$ of each factor and has dimension $N^{3g_1+s_1-3}\times N^{3g_2+s_2-3}=N^{3g+s-6}$. In particular, given a multicurve $\g=\g_1\cup\cdots\cup\g_k$, the dimension of an irreducible representation of $\T^q_\la(S)$ is $N^k$ times the dimension of an irreducible representation of $\T^q_{\la_\g}(S_\g)$, regardless of the connectivity of $S$ and $S_\g$.
\end{rem}

\subsection{\label{convrep} Convergence of representations in the augmented Teichm\"uller space}

We suppose once again that $q$ is a primitive $N$--th root of unity with $N$ odd. Theorem~\ref{BoLiu1} implies that, to $\mathbf{p}=(p_1,\ldots,p_s)\in\{0,\ldots,N-1\}^s$ and a continuous family of hyperbolic metrics $m_t\in\T(S)$, $t\in\left(0,1\right]$, one can associate a continuous family of irreducible representations $\rho_t\colon\T^q_\la(S)\to\End(V)$ as follows: let $x_1(t)$, \ldots, $x_n(t)$ be the shearing parameters associated to $m_t$ for the triangulation $\la$, and let $\rho_1$ be an irreducible representation classified by $m_1$ and weights $p_1$, \ldots, $p_n$. Then, by Theorem~\ref{BoLiu1}, there are elements $A_1$, \ldots, $A_n$ of $\End(V)$ such that $A^N_i=\Id_V$ and $\rho_1(X_i)=\sqrt[N]{x_i(1)}A_i$, for $i=1,\ldots,n$. One can then construct a family of representations $\rho_t$, defined on the generators by $\rho_t(X_i)=\sqrt[N]{x_i(t)}A_i$. In this way, we obtain a family of representations $\rho_t$ classified by $m_t$, $t\in\left(0,1\right]$, and weights $\mathbf{p}$. It is continuous in the sense that, for any element $X\in\T^q_\la(S)$, $\rho_t(X)$ is a continuous family in $\End(V)$.

By composing with the homomorphism $\Theta^q_{\g,\la}$ from Proposition~\ref{homom}, any representation $\rho$ of $\T^q_\lambda(S)$ gives a representation $\rho\circ\Theta^q_{\g,\la}$ of $\T^q_{\la_\g}(S_\g)$. If $\rho_m$ is an irreducible representation classified by $m\in\T(S)$ and weights $\mathbf{p}$, then, by the dimension count done in Remark~\ref{dimension}, $\rho_m\circ\Theta^q_{\g,\la}$ is a \emph{reducible} representation. We would like to know how this representation decomposes into irreducible subrepresentations when $m$ approaches $m_\g\in\T(S_\g)$.

We recall that the punctures of $S_\g$ are $v_1$, \ldots, $v_s$ corresponding to the same punctures in $S$ together with the new punctures $v'_1$, $v''_1$, \ldots, $v'_k$, $v''_k$ corresponding to the removal of the curves $\g_1$, \ldots, $\g_k$. Given $\mathbf{p}=(p_1,\ldots,p_s)$ weights labelling the punctures of $S$, we say that $\mathbf{p_\g}\in\{0,\ldots,N-1\}^{s+2k}$ are \emph{compatible weights} labelling $(v_1,\ldots,v_s,v'_1,v''_1,\ldots,v'_k,v''_k)$ if they are of the form $\mathbf{p_\g}=(p_1,\ldots,p_s,i_1,i_1,\ldots,i_k,i_k)$.

\begin{thm}
\label{cheforep}
Let $m_t\in\T(S)$ be a continuous family of hyperbolic metrics such that $m_t$ converges to $m_\g\in\T(S_\g)$ in $\overline{\T(S)}$ as $t\to 0$. Let $\rho_t\colon\T^{q}_{\la}(S)\to\End(V)$ be a continuous family of irreducible representations classified by $m_t$ and weights $\mathbf{p}\in\{0,\ldots,N-1\}^s$ labelling the punctures $v_1$, \ldots, $v_s$ of $S$.

Then, as $t\to 0$, the representation \[\rho_t\circ\Theta^q_{\g,\la}:\T^{q}_{\la_\g}(S_\g)\rightarrow \End(V)\]
approaches
\[\bigoplus_{\ \mathbf{p_\g}}\rho^\mathbf{p_\g}_\g\colon\T^{q}_{\la_\g}(S_\g)\to\End(\bigoplus_{\ \mathbf{p_\g}}V_\mathbf{p_\g})\]
 where the direct sum is over all possible compatible weights $\mathbf{p_\g}$ on $S_\g$ and $\rho^\mathbf{p_\g}_\g\colon\T^{q}_{\la_\g}(S_\g)\to\End(V_\mathbf{p_\g})$ is an irreducible representation classified by the metric $m_\g$ and the weights $\mathbf{p_\g}$.
\end{thm}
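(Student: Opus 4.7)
The plan is to identify the limit $\rho_0:=\lim_{t\to 0}\rho_t\circ\Theta^q_{\g,\la}$ as a well-defined representation of $\T^q_{\la_\g}(S_\g)$, decompose it by simultaneously diagonalizing the images of the central puncture elements attached to the new punctures, and match each summand to an irreducible representation through the central-character classification of Theorem~\ref{BoLiu1} and its extension to disconnected surfaces in Remark~\ref{dimension}.

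\emph{Existence and central character of the limit.} In the standard form of an irreducible representation, the edge generators act as $\rho_t(X_j)=x_j(t)^{1/N}A_j$ with fixed matrices $A_j$ satisfying $A_j^N=\Id$. Since $\Theta^q_{\g,\la}(Y_i)=X_{\mathbf{k}_i}$ is a Weyl-ordered monomial, $\rho_t(\Theta^q_{\g,\la}(Y_i))$ factors as a fixed matrix scaled by $\prod_j x_j(t)^{k_{ij}/N}$, and Proposition~\ref{coordlimit} forces $\prod_j x_j(t)^{k_{ij}}\to y_i$. Hence $\rho_t\circ\Theta^q_{\g,\la}(Y_i)$ converges in $\End(V)$, defining $\rho_0$. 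On the center of $\T^q_{\la_\g}(S_\g)$ we then read off $\rho_0(Y_i^N)=y_i\Id$ (since $q^N=1$ kills the Weyl reordering); for an old puncture $v_j$, Remark~\ref{theta} identifies $\Theta^q_{\g,\la}(Q_j)$ with $P_j$, so $\rho_0(Q_j)=q^{p_j}\Id$; and for each new puncture, Remark~\ref{theta} sends $Q_{v'_l}$ and $Q_{v''_l}$ to elements whose class in $G$ is (a signed power of) the loop $\g_l$, whose $N$-th power under $\rho_t$ equals $x_{\g_l}(t)\Id\to\Id$ by Lemma~\ref{limitlength}. Thus $\rho_0(Q_{v'_l})^N=\rho_0(Q_{v''_l})^N=\Id$.

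\emph{Decomposition via joint eigenspaces.} The operators $\{\rho_0(Q_{v'_l})\}_{l=1}^k$ and $\{\rho_0(Q_{v''_l})\}_{l=1}^k$ commute (they are images of central elements) and have order dividing $N$; the dual-graph identification from Remark~\ref{theta} pairs the two puncture loops at $\g_l$, forcing the two operators at each $l$ to act by the same eigenvalue on every joint eigenspace, which yields the compatible-weights convention. Jointly diagonalize to write $V=\bigoplus_{\mathbf{i}}V_\mathbf{i}$ indexed by $\mathbf{i}\in\{0,\dots,N-1\}^k$, with $\rho_0(Q_{v'_l})|_{V_\mathbf{i}}=\rho_0(Q_{v''_l})|_{V_\mathbf{i}}=q^{i_l}\Id$. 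Each $V_\mathbf{i}$ is $\rho_0$-invariant and its central character is precisely that of the irreducible representation classified by $m_\g$ and $\mathbf{p}_\g(\mathbf{i})=(p_1,\dots,p_s,i_1,i_1,\dots,i_k,i_k)$; by Theorem~\ref{BoLiu1} the restriction equals $\rho^{\mathbf{p}_\g(\mathbf{i})}_\g$ as soon as $\dim V_\mathbf{i}=N^{3g+s-3-k}$.

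\emph{Main obstacle.} It remains to show that every joint eigenvalue $\mathbf{i}$ appears with multiplicity exactly one, i.e.\ $\dim V_\mathbf{i}=N^{3g+s-3-k}$ for all $\mathbf{i}$. The key point is that although $X_{\g_l}$ is central in the image of $\Theta^q_{\g,\la}$, it is not central in $\T^q_\la(S)$: any edge $\la_j$ crossing $\g_l$ contributes $\s(X_j,X_{\g_l})\neq 0$, so the invertible operator $\rho_t(X_j)$ conjugates the eigenspaces of $\rho_t(X_{\g_l})$ to each other cyclically, forcing all $N$ eigenvalues of $\rho_t(X_{\g_l})$ to occur with equal multiplicity. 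The $X_{\g_l}$ themselves commute because the $\g_l$ are pairwise disjoint and hence $\widehat{\g}_l\cdot\widehat{\g}_m=0$ in $H_1(\widehat S)$, so this argument can be run simultaneously in all $k$ directions to produce a uniform joint spectrum. Equal multiplicities are preserved in the limit $t\to 0$, giving the claimed direct-sum decomposition.
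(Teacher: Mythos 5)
Your overall strategy is the same as the paper's: transport the central elements of $\T^q_{\la_\g}(S_\g)$ through $\Theta^q_{\g,\la}$ (the old puncture elements go to the $P_j$, and both new puncture elements at $\g_l$ go to the graph-length monomial $X_{\g_l}$), decompose $V$ into the joint eigenspaces of the commuting operators $\rho_t(X_{\g_l})$, identify each summand through its central character and a dimension count via Theorem~\ref{BoLiu1} and Remark~\ref{dimension}, and compute the limiting characters with Proposition~\ref{coordlimit} and Lemma~\ref{limitlength}. (The paper treats $k=1$ and then inducts using Lemma~\ref{composition}; you handle all the curves simultaneously, which is only a cosmetic difference.)

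The gap is in your ``main obstacle'' paragraph, which is exactly the delicate step. First, the assertion that $\s(X_j,X_{\g_l})\neq 0$ for \emph{any} edge $\la_j$ crossing $\g_l$ is not justified: by Lemma~\ref{homological} this pairing is the \emph{algebraic} intersection number $\widehat{e}_j\cdot\widehat{\g}_l$ in the branched double cover $\widehat{S}$, a signed count that can vanish even when the geometric intersection of $\la_j$ with $\g_l$ is nonzero. This is precisely why the paper proves Lemma~\ref{skewcommutant} by constructing a specific monomial $\Delta=X_\delta^{\pm1}$ --- a dual simple closed curve meeting $\g$ once when $\g$ is nonseparating, and an arc through two branch points when $\g$ is separating --- rather than relying on a single generator. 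Second, even granting some $\alpha=\s(X_j,X_{\g_l})\neq0$, conjugation by $\rho_t(X_j)$ multiplies the eigenvalues of $\rho_t(X_{\g_l})$ by $q^{2\alpha}$, so it only permutes them cyclically along an orbit of size $N/\gcd(\alpha,N)$. To conclude that \emph{all} $N$ eigenvalues $c(t)q^i$ occur, each with multiplicity $N^{3g+s-4}$ --- which is what your dimension count $\dim V_{\mathbf{i}}=N^{3g+s-3-k}$ and the appearance of every compatible weight $\mathbf{p_\g}$ require --- you need $\gcd(\alpha,N)=1$. The paper arranges $\alpha=2$ exactly and uses that $N$ is odd, so $q^{4}$ still has order $N$. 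Without controlling $\alpha$ modulo the divisors of $N$, your argument only gives equal multiplicities within each $\langle q^{2\alpha}\rangle$-orbit of the spectrum and does not rule out missing summands. The rest of your proposal goes through once this step is repaired by invoking (or reproving) Lemma~\ref{skewcommutant}.
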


\begin{proof}
We first suppose that $k=1$, that is, $\g$ consists of a single curve.

Let $P_1$, \ldots, $P_s$ be the central elements of $\T^q_\la(S)$ associated to the punctures $v_1$, \ldots, $v_s$ of $S$ and $P^\g_1$, \ldots, $P^\g_s$ be the central elements of $\T^q_{\la_\g}(S_\g)$ associated to the same punctures in $S_\g$. We also have the two central elements $P'$ and $P''$ associated to the two new punctures $v'$ and $v''$. Consider also the monomial $X_\mathbf{\g}\in\T^q_\la(S)$ associated to the retraction of $\g$ to a cycle in $G$. In practice, if $\g$ crosses $c_i$ times the edge $\la_i$ of $\la$ for $i=1,\ldots,n$, we have
\[
X_\g=[X^{c_1}_1\cdots X^{c_n}_n].
\] 
This element is the quantum analogue of the exponential graph length $x_\g$ discussed at the end of Section~\ref{extension}.
\begin{lem}
\label{punctures}
The elements defined above satisfy:
\begin{enumerate}
\item $\Theta^q_{\g,\la}(P^\g_i)=P_i$ for $i=1,\ldots,s$;
\item $\Theta^q_{\g,\la}(P')=\Theta^q_{\g,\la}(P'')=X_\g$.
\end{enumerate}
\end{lem}
\begin{proof} This is a consequence of the interpretation of $\Theta^q_{\g,\la}$ given in Remark~\ref{theta}.

For (1), let $\alpha_i$ be a small curve going around $v_i$ once in $S_\g\subset S$. If we identify $\alpha_i$ with its retraction to a cycle in $G'_\g$, we have that $P^\g_i=Y_{\alpha_i}$. Since $\pi(\alpha_i)$ corresponds to the retraction of $\alpha_i$ onto $G$, we see that 
\[\Theta^q_{\g,\la}(P^\g_i)=X_{\pi(\alpha_i)}=P_i.\]

For (2), let $\g'$ be a curve parallel to $\g$ such that $\g'$ is homotopic to $v'$ in $S_\g$. Then $\g'$ retracts to a cycle in $G'_\g$ and $P'=Y_\g$. In addition $\pi(\g')$ corresponds to the retraction of $\g'$, and hence of $\g$, onto $G$, so
\[\Theta^q_{\g,\la}(P')=X_{\pi(\g')}=X_\g.\]
The same argument holds for $P''$ if one considers a curve $\g''$ parallel to $\g$ and homotopic to $v''$ in $S_\g$.
\end{proof}

\begin{lem}
\label{skewcommutant}
There exists $\Delta\in\mathcal{T}^q_\lambda(S)$ such that $X_\g\Delta=q^4 \Delta X_\g$. 
\end{lem}
\begin{proof}
By Lemma~\ref{homological}, it suffices to find a path $\delta$ in $G$ such that $\widehat{\g}\cdot\widehat{\delta}=\pm 2$, where $\widehat{\g}$ and $\widehat{\delta}$ are (the classes in $H_1(\widehat{S})$ of) oriented $\eta$--anti-invariant lifts of $\g$ and $\delta$ to $\widehat{S}$. Then $\Delta=X^{\pm 1}_\delta$ will satisfy the lemma, since $X_\g X_\delta=q^{2\widehat{\g}\cdot\widehat{\delta}}X_\delta X_\g$.

If $\g$ is non separating, and since $\g$ is an essential simple closed curve, there exists $\delta$ another essential simple closed curve in $S$ which intersects $\g$ exactly once. In addition, one can choose representatives of $\g$ and $\delta$ not passing through the ramification points of the covering $p\colon\widehat{S}\to S$, that is, not passing through the vertices of $G$. Then, by construction, $\widehat{\delta}=p^{-1}(\delta)$ is such that $\widehat{\g}\cdot\widehat{\delta}=\pm 2|\g\cap\delta|=\pm 2$.

If $\g$ is separating then it divides the set of vertices of $G$ into two non-empty subsets. Let $\delta$ be an arc in $S$ with endpoints at vertices of $G$ on each side of $\g$ and intersecting $\g$ exactly once. Then $\widehat{\delta}=p^{-1}(\delta)$ is a closed curve in $\widehat{S}$ with a natural orientation and by construction $\widehat{\g}\cdot\widehat{\delta}=2$.
\end{proof}

Lemma~\ref{skewcommutant} implies that, under the action of $\rho_t(X_\g)$, $V$ decomposes into eigenspaces $V_i$ of dimension $N^{3g+p-4}$ with associated eigenvalues $c(t) q^{i}$, where  $i\in\left\{0,\ldots,N-1\right\}$ and $c(t)\in\C^*$. In addition \[\rho_t(X^N_\g)=\rho_t((X^N_1)^{c_1}\cdots(X^N_n)^{c_n})=x^{c_1}_1(t)\cdots x^{c_n}_n(t)\Id_V=x_\g(t)\Id_V\]
where $x_\g(t)$ is the exponential graph length of $\g$. Hence, after a shift by some $N$--th root of 1, we can consider that $c(t)=\sqrt[N]{x_\g(t)}\in\R_+$.

$P'$ and $P''$ are central in $\T^q_{\la_\g}(S_\g)$, so the eigenspaces of $\rho_t\circ\Theta^q_{\g,\la}(P')=\rho_t\circ\Theta^q_{\g,\la}(P'')=\rho_t(X_\g)$ are invariant under the action of $\rho_t\circ\Theta^q_{\g,\la}$. In other words
\[
\rho_t\circ\Theta^q_{\g,\la}=\bigoplus_{i}\rho^i_{t,\g},
\]
where $\rho^i_{t,\g}\colon\T^q_{\la_\g}(S_\g)\to\End(V_i)$ is such that
\[
\rho^i_{t,\g}(P')=\rho^i_{t,\g}(P'')=c(t)q^i\Id_{V_i}.
\]
For dimensional reasons these representations are irreducible by Theorem~\ref{BoLiu1}.

By Lemma~\ref{punctures}, we have that $\rho_t\circ\Theta^q_{\g,\la}(P^\g_j)=\rho_t(P_j)=q^{p_j}\Id_V$, hence
\[
\rho^i_{t,\g}(P^\g_j)=q^{p_j}\Id_{V_i},
\]
and by definition of $\Theta^q_{\g,\la}$,
\[
\rho_t\circ\Theta^q_{\g,\la}(Y^N_j)=\rho_t((X^N_1)^{k_1}\cdots(X^N_n)^{k_n})=x^{k_{j1}}_1(t)\cdots x^{k_{jn}}_n(t)\Id_{V},
\]
hence
\[
\rho^i_{t,\g}(Y^N_j)=x^{k_{j1}}_1(t)\cdots x^{k_{jn}}_n(t)\Id_{V_i}.
\]

Then, by Proposition~\ref{coordlimit}, as $t\rightarrow 0$, $x^{k_{j1}}_1(t)\cdots x^{k_{jn}}_n(t)\rightarrow y_j$, the shear parameters of $m_\g$, and by Lemma~\ref{limitlength}, $c_\g(t)\rightarrow 1$. This implies that, as $t\rightarrow 0$, $\rho^i_{t,\g}$ approaches the irreducible representation $\rho^i_\g$ of $\T^q_{\la_\g}(S_\g)$ which is classified by the weights $p_1$, \ldots, $p_s$ associated to the punctures $v_1$, \ldots, $v_s$, the weight $i$ associated to $v'$ and $v''$, and the hyperbolic metric $m_\g\in\T(S_\g)$.

Using Lemma~\ref{composition}, the case of a multicurve $\g=\{\g_1,\ldots,\g_k\}$ follows by induction on $k$.
\end{proof}



\section{Behavior under changes of coordinates: the quantum Teichm\"uller space}

\subsection{The quantum Teichm\"uller space}

We want to apply the results of the preceding section to the representations of the quantum Teichm\"uller space $\T^q(S)$. Let us first recall its construction as given in \cite{Liu1}. If $\la$ is an ideal triangulation of $S$, we denote by $\widehat{\T}^q_\la(S)$ the fraction division algebra of the Chekhov--Fock algebra $\T^q_\la(S)$. Chekhov and Fock constructed a family of isomorphisms $\Phi^q_{\la\la'}:\widehat{\T}^q_{\la'}(S)\rightarrow\widehat{\T}^q_\la(S)$, called \emph{(quantum) coordinate change isomorphisms}, defined for any two triangulations $\la$, $\la'$ of $S$. In particular, if $\la''$ is another triangulation, they satisfy the composition relation $\Phi^q_{\la\la''}=\Phi^q_{\la\la'}\circ\Phi^q_{\la'\la''}$. The main example is given by the case when $\la$ and $\la'$ differ by a diagonal exchange in an embedded square in $S$ as in Figure~\ref{diagexch}.
\begin{figure}[htb!]
\includegraphics{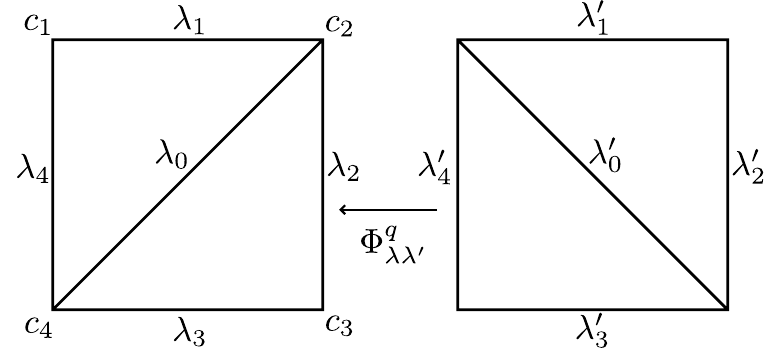}
\caption{\label{diagexch}}
\end{figure}
Then $\Phi^q_{\la\la'}(X'_n)=X_n$ if $n\neq1,\ldots,5$, $\Phi^q_{\la\la'}(X'_0)=X^{-1}_0$ and 
\begin{align*}
& \Phi^q_{\la\la'}(X'_1)=(1+qX_0)X_1, & \Phi^q_{\la\la'}(X'_2)=(1+qX^{-1}_0)^{-1}X_2,\\
& \Phi^q_{\la\la'}(X'_3)=(1+qX_0)X_3, & \Phi^q_{\la\la'}(X'_4)=(1+qX^{-1}_0)^{-1}X_4.
\end{align*}
We refer to \cite{Liu1} or \cite{BoLiu} for similar formulas when some of the edges of the square are identified. One can then construct $\Phi^q_{\la\la'}$ for any triangulations $\la$, $\la'$, using the composition relation and the fact that one can get from any triangulation $\la$ to another triangulation $\la'$ by a succession of diagonal exchanges and reindexings (see \cite{Penner1} for a proof). The fact that the maps so-obtained do not depend on the choice of a sequence of triangulations from $\la$ to $\la'$ is one of the achievements of \cite{CheFo}.

Using these isomorphisms we can construct the \emph{quantum Teichm\"uller space} $\T^q(S)$ as the quotient
\[\T^q(S)=\bigsqcup_\la\widehat{\T}^q_{\la}(S)/\sim\]
where the disjoint union is over all triangulations $\la$ of $S$, and the equivalence relation $\sim$ identifies $X'\in\widehat{\T}^q_{\la'}(S)$ to $\Phi^q_{\la\la'}(X')\in\widehat{\T}^q_{\la}(S)$.

\subsection{Representations of the quantum Teichm\"uller space}

A first attempt at defining a representation of $\T^q(S)$ would be to consider a family of representations $\rho_\la$ of $\widehat{\T}^q_\la(S)$ for every triangulation $\la$ of $S$ such that $\rho_\la\circ\Phi^q_{\la\la'}=\rho_{\la'}$ for every $\la$, $\la'$. However, one can easily check that such representations cannot be finite dimensional. On the other hand, when $q^2$ is an $N$--th root of unity, the Chekhov--Fock algebras $\T^q_\la(S)$ admit many finite dimensional representations. Hence, for our purpose, a representation of the quantum Teichm\"uller space $\T^q(S)$ will be a family of representations of $\T^q_\la(S)$, for every triangulation $\la$ of $S$, satisfying certain compatibility relations when changing triangulations.

More precisely, let $\rho_\la:\T^q_\la(S)\rightarrow\text{End}(V)$ be an algebra homomorphism satisfying the following condition: for every Laurent polynomial $X'\in\T^q_{\la'}(S)$, the rational fraction $\Phi^q_{\la\la'}(X')\in\widehat{\T}^q_{\la'}(S)$ can be written as 
\[\Phi^q_{\la\la'}(X')=P_1Q^{-1}_1=Q^{-1}_2P_2\]
where $P_1$, $Q_1$, $P_2$, $Q_2\in\T^q_\la(S)$ are Laurent polynomials for which $\rho_\la(Q_1)$ and $\rho_\la(Q_2)$ are invertible in $\End(V)$. If $\rho_\la$ satisfies such a condition, we say that the composition
\[\rho_\la\circ\Phi^q_{\la\la'}:\T^q_{\la'}(S)\rightarrow\text{End}(V)\]
\emph{makes sense} and is defined naturally as
\[\rho_\la\circ\Phi^q_{\la\la'}(X')=\rho_\la(P_1)\rho_\la(Q_1)^{-1}=\rho_\la(Q_2)^{-1}\rho_\la(P_2)\in\End(V).\]
One can check that this definition doesn't depend on the decomposition of $\Phi^q_{\la\la'}(X')$ as a quotient of polynomials, and that this indeed defines an algebra homomorphism.

\begin{defi}
A \emph{representation} $\rho=\left\{\rho_\la\right\}_\la$ of the quantum Teichm\"uller space $\T^q(S)$ over the vector space $V$ consists of the data of an algebra homomorphism $\rho_\la:\T^q_\la(S)\rightarrow\text{End}(V)$ for every triangulation $\la$ such that, for every $\la$, $\la'$, the representation $\rho_\la\circ\Phi^q_{\la\la'}:\T^q_{\la'}(S)\rightarrow\text{End}(V)$ makes sense and is equal to $\rho_{\la'}$.
\end{defi}

We will sometimes use the notation $\rho:\T^q(S)\rightarrow\text{End}(V)$ for such a representation, keeping in mind that $\rho$ consists in fact of a family of homomorphisms $\left\{\rho_\la:\T^q_\la(S)\rightarrow\text{End}(V)\right\}_\la$. Such representations were called representations of the polynomial core of $\T^q(S)$ in \cite{BoLiu}.

To prove that a family of representations $\left\{\rho_\la\right\}_\la$ of the Chekhov--Fock algebras is in fact a representation of $\T^q(S)$, one can use the following lemma (Lemma 25 in \cite{BoLiu}).

\begin{lem}
\label{criterion}
Let an algebra homomorphism $\rho_\la:\T^q_\la(S)\rightarrow\text{End}(V)$ be given for every ideal triangulation $\la$. Suppose that $\rho_\la\circ\Phi^q_{\la\la'}:\T^q_{\la'}(S)\rightarrow\text{End}(V)$ makes sense and is equal to $\rho_{\la'}$ whenever $\la$ and $\la'$ differ by a diagonal exchange or a re-indexing. Then $\rho=\left\{\rho_\la\right\}_\la$ is a representation of $\T^q(S)$.
\end{lem}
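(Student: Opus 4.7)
The plan is to reduce the comparison of $\rho_\la \circ \Phi^q_{\la\la'}$ and $\rho_{\la'}$ for arbitrary triangulations $\la, \la'$ to the case already handled by hypothesis, namely where $\la$ and $\la'$ differ by a single diagonal exchange or reindexing. The mechanism for this reduction is Penner's theorem (already invoked in the paper's construction of $\Phi^q_{\la\la'}$), which asserts that any two ideal triangulations are connected by a finite sequence of such elementary moves, together with the composition rule $\Phi^q_{\la\la''} = \Phi^q_{\la\la'}\circ\Phi^q_{\la'\la''}$.

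Concretely, I would proceed by induction on the length $k$ of a sequence $\la = \la_0, \la_1, \ldots, \la_k = \la'$ in which consecutive triangulations differ by a diagonal exchange or reindexing. The base case $k=1$ is precisely the hypothesis. For the inductive step, set $\la'' = \la_{k-1}$, so that by the inductive hypothesis $\rho_\la \circ \Phi^q_{\la\la''}$ makes sense and equals $\rho_{\la''}$, while by the base case $\rho_{\la''}\circ\Phi^q_{\la''\la'}$ makes sense and equals $\rho_{\la'}$. Using the Chekhov--Fock composition relation $\Phi^q_{\la\la'} = \Phi^q_{\la\la''}\circ\Phi^q_{\la''\la'}$, the goal becomes to show that this composed homomorphism of fraction algebras also ``makes sense'' relative to $\rho_\la$ and computes $\rho_{\la'}$.

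The core technical step is as follows. Given $X' \in \T^q_{\la'}(S)$, write $\Phi^q_{\la''\la'}(X') = P_1 Q_1^{-1}$ with $P_1, Q_1 \in \T^q_{\la''}(S)$ and $\rho_{\la''}(Q_1)$ invertible in $\End(V)$; such a decomposition exists because $\rho_{\la''}\circ\Phi^q_{\la''\la'}$ makes sense. Applying $\Phi^q_{\la\la''}$ yields $\Phi^q_{\la\la'}(X') = \Phi^q_{\la\la''}(P_1)\,\Phi^q_{\la\la''}(Q_1)^{-1}$ in $\widehat{\T}^q_\la(S)$. By the inductive hypothesis applied to the Laurent polynomials $P_1$ and $Q_1$, I can write $\Phi^q_{\la\la''}(P_1) = U_1 V_1^{-1}$ and $\Phi^q_{\la\la''}(Q_1) = U_2 V_2^{-1}$ with each $V_i$ having $\rho_\la$-invertible image; moreover, since $Q_1$ is invertible in $\widehat{\T}^q_{\la''}(S)$, the element $U_2$ is itself invertible in $\widehat{\T}^q_\la(S)$, and the identity $\rho_\la(U_2)\rho_\la(V_2)^{-1} = \rho_{\la''}(Q_1)$ shows that $\rho_\la(U_2)$ is invertible in $\End(V)$. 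Using the Ore property of the Chekhov--Fock algebra to rewrite $V_1^{-1}V_2$ as a fraction $V_2' V_1'^{-1}$ with $V_1'$ nonzero, I obtain an expression $\Phi^q_{\la\la'}(X') = PQ^{-1}$ in the division algebra with $P,Q\in\T^q_\la(S)$, and a direct computation confirms $\rho_\la(P)\rho_\la(Q)^{-1} = \rho_{\la''}(P_1)\rho_{\la''}(Q_1)^{-1} = \rho_{\la'}(X')$.

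The main obstacle is the last step: ensuring that the ``makes sense'' condition propagates through composition. The subtlety is that producing the required Laurent-polynomial denominator $Q$ with $\rho_\la(Q)$ invertible in $\End(V)$ demands a careful application of the Ore condition on the Chekhov--Fock algebra and a bookkeeping argument showing that invertibility of all intermediate denominator images is preserved. Once this is in place, well-definedness of evaluation on the Ore localization (which is what makes the paper's definition of ``makes sense'' unambiguous between left and right fractions) immediately yields equality of the two homomorphisms $\rho_\la\circ\Phi^q_{\la\la'}$ and $\rho_{\la'}$ on all of $\T^q_{\la'}(S)$, completing the induction.
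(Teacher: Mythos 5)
You should first know that the paper itself does not prove this lemma: it is quoted from Bonahon--Liu with the explicit citation ``Lemma 25 in \cite{BoLiu}'', so there is no internal proof to compare against. Your overall skeleton --- connect $\la$ to $\la'$ by a chain of diagonal exchanges and reindexings via Penner's theorem, induct on the length of the chain, and propagate the ``makes sense'' condition through the composition relation $\Phi^q_{\la\la'}=\Phi^q_{\la\la''}\circ\Phi^q_{\la''\la'}$ --- is exactly the intended reduction, and it is the route taken in \cite{BoLiu}.

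However, the core technical step is not established, and as written there is a genuine gap precisely at the point you yourself flag as ``the main obstacle.'' After the Ore rewriting $V_1^{-1}V_2=V_2'(V_1')^{-1}$ you arrive at $\Phi^q_{\la\la'}(X')=(U_1V_2')(U_2V_1')^{-1}$, so the denominator whose image must be invertible is $Q=U_2V_1'$. You show $\rho_\la(U_2)$ is invertible, but for $V_1'$ you offer only that it is \emph{nonzero}; in a finite-dimensional representation a nonzero Laurent polynomial can easily have non-invertible image, and the Ore relation $V_2V_1'=V_1V_2'$ only yields $\rho_\la(V_1')=\rho_\la(V_2)^{-1}\rho_\la(V_1)\rho_\la(V_2')$, which is invertible if and only if $\rho_\la(V_2')$ is --- a circular condition. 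Asserting ``once this is in place'' does not discharge it: this invertibility bookkeeping \emph{is} the substance of the lemma (it is the content of the composition lemma, Lemma 24 of \cite{BoLiu}, on which Lemma 25 rests). Note also that you use only the right-fraction half $P_1Q_1^{-1}$ of the definition of ``makes sense,'' whereas the definition in the paper deliberately demands a two-sided decomposition $P_1Q_1^{-1}=Q_2^{-1}P_2$ with both $\rho_\la(Q_1)$ and $\rho_\la(Q_2)$ invertible; this two-sided form is what allows one to trade left inverses for right inverses (e.g.\ writing $\Phi^q_{\la\la''}(Q_1)^{-1}=D_2^{-1}C_2$ with both images invertible) and is needed both for the well-definedness of the evaluation and for closing the induction. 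To complete the proof you must carry the left-fraction decompositions along and verify that the composite again admits a two-sided decomposition with invertible denominators --- or else simply cite \cite{BoLiu}, as the paper does.
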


Given an irreducible representation $\rho_\la\colon\T^q_\la(S)\to\End(V)$ for some ideal triangulation $\la$, one can show that the composition $\rho_\la\circ\Phi^q_{\la\la'}\colon\T^q_\la(S)\to\End(V)$ makes sense for any other ideal triangulation $\la'$ and defines an irreducible representation $\rho_{\la'}$ of $\T^q_{\la'}(S)$. Such a family $\rho=\{\rho_\la\}_\la$ is thus called an \emph{irreducible representation} of $\T^q(S)$. 

In addition, if $\rho_\la$ is classified by the weights $\mathbf{p}=(p_1,\ldots,p_s)$ and the metric $m\in\T(S)$ expressed in the shear coordinates for $\la$,  then the representation $\rho_{\la'}$ is also classified by the same weights and the metric $m$ expressed in the shear coordinates for $\la'$. In this case we say that $\rho=\{\rho_\la\}_\la$ is the irreducible  representation of $\T^q(S)$ classified by $m$ and $\mathbf{p}$ (see Lemma 29 and Theorem 30 in \cite{BoLiu}).

Finally, if $m_t\in\T(S)$ is a continuous family of hyperbolic metrics and $\rho_{t,\la}$ is a continuous family of representations of $\T^q_\la(S)$ classified by $m_t$ and weights $\mathbf{p}$, we say that the representations $\rho_t=\{\rho_{t,\la}\}_\la$ obtained in this way form a \emph{continuous family of representations of} $\T^q(S)$.

\subsection{Main theorem}
In this section we restrict once again to the case when $q$ is an $N$--th root of unity with $N$ odd. The next step is to study how the decomposition obtained in Theorem~\ref{cheforep} is affected by changing the triangulation. Given two triangulations $\la$, $\la'$, we can consider the following diagram:
\begin{align}
\label{diagram}
\xymatrix{{\widehat{\T}^q_{\la'}(S)} \ar[r]^{\Phi^q_{\la\la'}} & {\widehat{\T}^q_{\la}(S)}\\
					{\widehat{\T}^q_{\la'_\g}(S_\g)}\ar[u]^{\Theta^q_{\g,\la'}} \ar[r]^{\Phi^q_{\la_\g\la'_\g}} & {\widehat{\T}^q_{\la\g}(S_\g)}\ar[u]_{\Theta^q_{\g,\la}}}
\end{align}
which is in general \emph{non-commutative}.

We focus on the case when $\la$ and $\la'$ differ only by a diagonal exchange in a square $Q$ as in Figure~\ref{diagexch}.

If the curve $\gamma$ never crosses $Q$ vertically or horizontally, that is, never crosses successively $\la_1$, $\la_0$, $\la_3$ or $\la_2$, $\la_0$, $\la_4$, the triangulations $\la_\g$ and $\la'_\g$ also differ by a diagonal exchange and can be identified outside of a square $Q_\g$ (cf Figure~\ref{nocrossing})

\begin{figure}[htb!]
\includegraphics{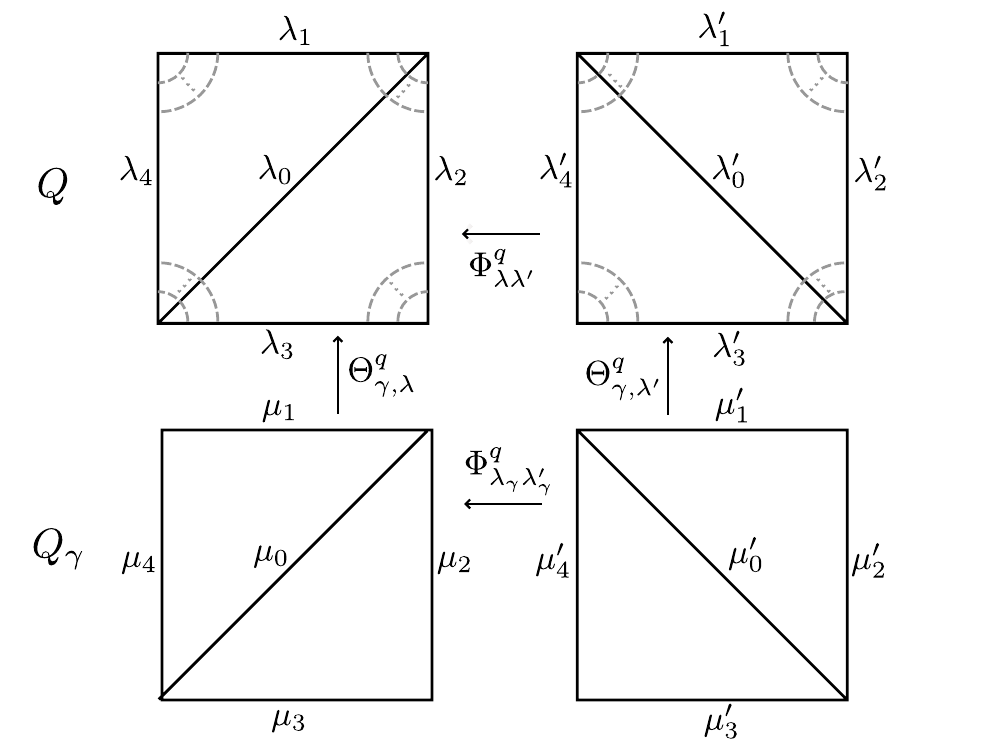}
\caption{\label{nocrossing}}
\end{figure}

\begin{figure}[htb!]
\includegraphics{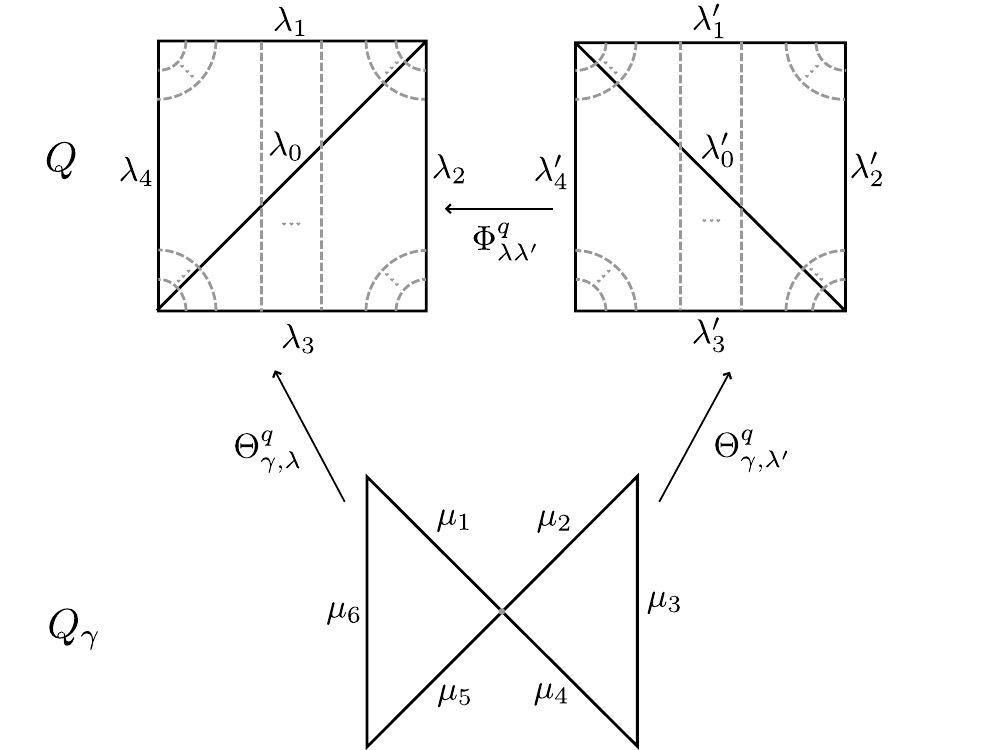}
\caption{\label{crossing}}
\end{figure}

If $\gamma$ does cross $Q$ vertically or horizontally, the triangulations $\la_\g$ and $\la'_\g$ can be identified (cf Figure~\ref{crossing}). In this case we introduce two maps $\Psi^q_{\la\la',v}$ and $\Psi^q_{\la\la',h}$, defined for each case of identifications of the boundary of $Q$ as follows: in each case $\Psi^q_{\la\la',v}(X'_i)=\Psi^q_{\la\la',h}(X'_i)=X_i$ for $i\geq 5$, $\Psi^q_{\la\la',v}(X'_0)=\Psi^q_{\la\la',h}(X'_0)=X^{-1}_0$ and
\begin{itemize}
\item if $Q$ is embedded,
\begin{align*}
& \Psi^q_{\la\la',v}(X'_1)=qX_0X_1, & \Psi^q_{\la\la',v}(X'_2)=X_2\\
& \Psi^q_{\la\la',v}(X'_3)=qX_0X_3, & \Psi^q_{\la\la',v}(X'_4)=X_4
\end{align*}
and
\begin{align*}
& \Psi^q_{\la\la',h}(X'_1)=X_1, & \Psi^q_{\la\la',h}(X'_2)=q^{-1}X_0X_2\\
& \Psi^q_{\la\la',h}(X'_3)=X_3, & \Psi^q_{\la\la',h}(X'_4)=q^{-1}X_0X_4;
\end{align*}
\item if $\la_1=\la_3$ and $\la_2\neq\la_4$,
\begin{align*}
& \Psi^q_{\la\la',v}(X'_1)=q^4X^2_0X_1, & \Psi^q_{\la\la',v}(X'_2)=X_2, \\
&                                       & \Psi^q_{\la\la',v}(X'_4)=X_4
\end{align*}
and
\begin{align*}
& \Psi^q_{\la\la',h}(X'_1)=X_1, & \Psi^q_{\la\la',h}(X'_2)=q^{-1}X_0X_2, \\
&                               & \Psi^q_{\la\la',h}(X'_4)=q^{-1}X_0X_4;
\end{align*}
\item if $\la_1=\la_2$ and $\la_3\neq\la_4$,
\begin{align*}
& \Psi^q_{\la\la',v}(X'_1)=X_0X_1, & \\
& \Psi^q_{\la\la',v}(X'_3)=qX_0X_3, & \Psi^q_{\la\la',v}(X'_4)=X_4
\end{align*}
and
\begin{align*}
& \Psi^q_{\la\la',h}(X'_1)=X_0X_1, & \\
& \Psi^q_{\la\la',h}(X'_3)=X_3, & \Psi^q_{\la\la',h}(X'_4)=q^{-1}X_0X_4;
\end{align*}
\item if $\la_1=\la_3$ and $\la_2=\la_4$, that is, $S$ is a once punctured torus,
\begin{align*}
& \Psi^q_{\la\la',v}(X'_1)=q^4X^2_0X_1,\\
& \Psi^q_{\la\la',v}(X'_2)=X_2
\end{align*}
and
\begin{align*}
& \Psi^q_{\la\la',h}(X'_1)=X_1,\\
& \Psi^q_{\la\la',h}(X'_2)=q^{-4}X^2_0X_2.
\end{align*}
\end{itemize}
The other cases are inverses of the ones above. Note that we exclude the case when $\la_1=\la_2$ and $\la_3=\la_4$ corresponding to a sphere with three holes since there are no essential simple closed curves on $S$ in this case.

One can easily check that $\Psi^q_{\la\la',v}$ and $\Psi^q_{\la\la',h}$ are algebra homomorphisms.

\begin{prop}
\label{inducedmap}
If $\la$ and $\la'$ differ by a diagonal exchange in a square $Q$ as in figure~\ref{diagexch} then one of the following is true:
\begin{enumerate}
\item the multicurve $\g$ doesn't cross $Q$ horizontally or vertically and
\[\Phi^q_{\la\la'}\circ\Theta^q_{\g,\la'}=\Theta^q_{\g,\la}\circ\Phi^q_{\la_\g\la'_\g};\]
\item $\g$ crosses $Q$ vertically at least once and
\[\Psi^q_{\la\la',v}\circ\Theta^q_{\g,\la'}=\Theta^q_{\g,\la};\]
\item $\g$ crosses $Q$ horizontally at least once and
\[\Psi^q_{\la\la',h}\circ\Theta^q_{\g,\la'}=\Theta^q_{\g,\la}.\]
\end{enumerate}
\end{prop}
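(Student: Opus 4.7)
My plan is to verify each of the three identities by comparing values on the generators $Y'_1,\ldots,Y'_n$ of $\T^q_{\la'_\g}(S_\g)$. Since all four maps in the diagram extend to algebra homomorphisms on the fraction division algebras, equality on generators suffices. I would use throughout the dual-graph description of $\Theta^q_{\g,\la}$ given in Remark~\ref{theta}: each generator $Y_i$ is identified with a path $\alpha_i$ in $G'_\g$, and $\Theta^q_{\g,\la}(Y_{\alpha_i})=X_{\pi(\alpha_i)}$, where $\pi\colon G'_\g\to G$ collapses maximal chains to edges of $G$. The whole proposition is then a geometric statement about how $\pi$ interacts with the dual-graph picture of a diagonal exchange in $Q$.

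For case (1), the hypothesis that $\g$ does not cross $Q$ vertically or horizontally means that every arc of $\g\cap Q$ enters and exits through adjacent sides of $Q$, so after pinching the diagonal edge $\la_0$ persists in $\la_\g$ as the diagonal of a square $Q_\g\subset S_\g$ (with possibly identified boundary). Hence $\la_\g$ and $\la'_\g$ differ by a diagonal exchange in $Q_\g$. For each generator $Y'_i$ whose edge $\mu'_i$ is disjoint from $Q_\g$ both compositions act as the identity, so the content is in the five generators $Y'_0,\ldots,Y'_4$ attached to $Q_\g$. I would compute both sides directly: on the top route, $\Theta^q_{\g,\la'}(Y'_i)$ is a quantum-ordered monomial $X'_{\mathbf{k}'_i}$ and $\Phi^q_{\la\la'}$ then distributes factors of the form $(1+qX_0)^{\pm 1}$; on the bottom route, $\Phi^q_{\la_\g\la'_\g}(Y'_i)$ produces expressions with factors $(1+qY_0)^{\pm 1}$ which $\Theta^q_{\g,\la}$ sends to $(1+qX_0)^{\pm 1}$ (because $\Theta^q_{\g,\la}(Y_0)=X_0$ in view of Remark~\ref{theta}), and the monomial parts match by an $\mathbf{k}$-counting argument. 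Matching quantum orderings on the two sides is then an application of Lemma~\ref{quantumorder}.

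For cases (2) and (3), one first observes geometrically from Remark~\ref{process} that when $\g$ crosses $Q$ through the diagonal $\la_0$, the square $Q$ does not survive the pinching as a square: the resulting triangulation of $S_\g$ does not involve $\la_0$ at all, and in fact $\la_\g=\la'_\g$ as ideal triangulations of $S_\g$. Consequently $\Phi^q_{\la_\g\la'_\g}=\Id$, and the claim reduces to showing $\Psi^q_{\la\la',v/h}\circ\Theta^q_{\g,\la'}=\Theta^q_{\g,\la}$ on each generator $Y_i$. Both sides are now monomials in the $X_j$'s, and the comparison is purely combinatorial: the path $\alpha_i\subset G'_\g$ corresponding to $\mu_i$ projects under $\pi$ (for $\la$) and under the analogous $\pi'$ (for $\la'$) to paths in $G$ and $G'$ respectively, and the difference in the number of times they cross the dual of $\la_0$ records whether the edge $\mu_i$ becomes adjacent to the exchanged diagonal from one side or the other after pinching. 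Each of the monomial formulas defining $\Psi^q_{\la\la',v}$ and $\Psi^q_{\la\la',h}$ (including the four subcases: embedded $Q$; $\la_1=\la_3$; $\la_1=\la_2$; once-punctured torus) is then verified by drawing the local picture inside $Q$ and counting these crossings, with the powers of $X_0$ and overall $q$-factors dictated by the Weyl ordering rules and Proposition~\ref{WPcoeff}.

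The hardest part, I expect, will be the bookkeeping of quantum orderings across the subcases of (2)--(3): the skew-commutation coefficients $\s_{ij}$ differ between $\la$ and $\la'$ (and again between $\la_\g$ and $\la'_\g$ in case (1)), so the $q$-prefactors that appear in $\Psi^q_{\la\la',v/h}$ do not appear symmetrically, and one must track them carefully using Lemma~\ref{quantumorder}. Conceptually, however, the maps $\Psi^q_{\la\la',v/h}$ are designed precisely as the ``tropical shadow'' of $\Phi^q_{\la\la'}$ along the $\g$-pinching --- mirroring the geometric fact that the diagonal $\la_0$ is absorbed into a bigon and ceases to be an edge of $\la_\g$ --- which is why the non-monomial factors $(1+qX_0)^{\pm 1}$ disappear and are replaced by single powers of $X_0$.
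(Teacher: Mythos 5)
Your proposal follows essentially the same route as the paper: verification on generators via the dual-graph/rectangle description of $\Theta^q_{\g,\la}$ from Remark~\ref{theta}, decomposition of each image monomial into local contributions at the corners of $Q$ and at vertical/horizontal crossings (which the paper packages as Lemma~\ref{monom}), and matching of quantum orderings via Lemma~\ref{quantumorder}. The one bookkeeping point you defer --- why the $q$-prefactors agree across the two triangulations --- is resolved in the paper exactly as you anticipate, by checking equalities of skew-commutation coefficients such as $\s(X_1,A_i)=\s(X'_1,B_i)$, so your plan matches the published argument.
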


\begin{proof}
We use the notations of Figures~\ref{diagexch}, \ref{nocrossing} and \ref{crossing}. The following lemma is a simple computation.
\begin{lem}
\label{monom}
Let $A_i$, $i=1,2,3,4$ monomials in $\T^q_\la(S)$ be the products of generators associated to the edges of $\la$ converging to the corner $c_i$ of $Q$, $A_5$ be the products of generators  when one crosses $Q$ vertically, $A_6$ the product of generators when one crosses $Q$ horizontally. Define similarly $B_1$, \ldots, $B_6$ monomials in $\T^q_{\la'}(S)$ for the triangulation $\la'$. Then, for $i,j=1,\ldots,4$
\[\Phi^q_{\la\la'}(\left[B_i\right])=\Psi^q_{\la\la',v}(\left[B_i\right])=\Psi^q_{\la\la',h}(\left[B_i\right])=\left[A_i\right]\]
and
\[\Psi^q_{\la\la',v}(\left[B_5\right])=\left[A_5\right]\text{ and } \Psi^q_{\la\la',h}(\left[B_6\right])=\left[A_6\right].\]
\end{lem}

Lemma~\ref{monom} says that $\Phi^q_{\la\la'}$, $\Psi^q_{\la\la',v}$ and $\Psi^q_{\la\la',h}$ send the products of generators at a corner of $Q$ for $\la'$ to the respective products for $\la$, \emph{respecting the quantum orderings}. In addition $\Psi^q_{\la\la',v}$ and $\Psi^q_{\la\la',h}$ respect the quantum ordered products of generators when crossing $Q$ vertically and horizontally respectively.

We first consider the case where both $Q$ and $Q_\g$ are embedded in $S$ and $S_\g$ respectively, and we look at the generators $Y_1$ and $Y'_1$ associated to the edges $\mu_1$ and $\mu'_1$. We recall that, by Remark~\ref{process}, the edges of $\la_\g$ can be identified with rectangles in the decomposition of $S_\g$ by $\la\smallsetminus\g$, formed by maximal chains of bigons.

For (1), we notice that $\mu_1$ corresponds to a rectangle for $\la\smallsetminus\g$ which starts along $\la_1$, and then crosses $l_i$ times $Q$ around the corner $c_i$ for $i=1,2,3,4$. It also crosses $k_i$ times $\la_i$ for $i=5,\ldots,n$. The same is true for $\mu'_1$. We let $Z=X^{k_5}_5\cdots X^{k_n}_n$ and $Z'=X^{'k_5}_5\cdots X^{'k_n}_n$. Then \[\Theta^q_{\g,\la}(Y_1)=\left[X_1A^{l_1}_1A^{l_2}_2A^{l_3}_3A^{l_4}_4Z\right]=q^\alpha X_1\left[A^{l_1}_1A^{l_2}_2A^{l_3}_3A^{l_4}_4Z\right]\] 
where $\alpha$ is some integer. We note that $\s(X_1,A_i)=\s(X'_1,B_i)$ for $i=1,2,3,4,$ hence
\[\Theta^q_{\g,\la}(Y'_1)=\left[X'_1B^{l_1}_1B^{l_2}_2B^{l_3}_3B^{l_4}_4Z'\right]=q^\alpha X'_1\left[B^{l_1}_1B^{l_2}_2B^{l_3}_3B^{l_4}_4Z'\right]\]
for the \emph{same} integer $\alpha$. We also note that $\mu_0$ corresponds to the edge $\la_0$ and hence $\Theta^q_{\g,\la}(Y_0)=X_0$. Using Lemma~\ref{monom} together with Lemma~\ref{quantumorder} we obtain
\begin{align*}
\Phi^q_{\la\la'}\circ\Theta^q_{\g,\la'}(Y'_1)&=\Phi^q_{\la\la'}(q^\alpha X'_1\left[B^{l_1}_1B^{l_2}_2B^{l_3}_3B^{l_4}_4Z'\right])\\ &=q^\alpha(1+qX_0)X_1\left[A^{l_1}_1A^{l_2}_2A^{l_3}_3A^{l_4}_4Z\right]\\
                                             &=\Theta^q_{\g,\la}((1+qY_0)Y_1)\\
                                             &=\Theta^q_{\g,\la}\circ\Phi^q_{\la_\g\la'_\g}(Y'_1).
\end{align*}
A similar computation works for $Y'_2$, $Y'_3$ and $Y'_4$. For $i>4$, $\mu_i$ corresponds to a rectangle with neither side ending along $Q$ but which may still cross it at the corners, and one shows in the same way that the equality holds for $Y'_5$, \ldots, $Y'_n$. Hence (1) is true in the case of embedded  squares.

For (2), $\mu_1$ corresponds to a rectangle for $\la\smallsetminus\g$ which starts along $\la_1$, crosses $l_i$ times the square $Q$ around the corner $c_i$ for $i=1,2,3,4$, and crosses $Q$ vertically $l_5$ times. For $\la'\smallsetminus\g$, it corresponds to a rectangle which starts along $\la'_0$, crosses $\la'_1$, then crosses $Q$ in the same way. It also crosses $k_i$ times $\la_i$ (resp. $\la'_i$), for $i=5,\ldots,n$. Then
\[\Theta^q_{\g,\la}(Y_1)=\left[X_1A^{l_1}_1A^{l_2}_2A^{l_3}_3A^{l_4}_4A^{l_5}_5Z\right]=q^\alpha X_1\left[A^{l_1}_1A^{l_2}_2A^{l_3}_3A^{l_4}_4A^{l_5}_5Z\right]\]
where $\alpha$ is some integer. We note that $\s(X_1,A_i)=\s(X'_0X'_1,B_i)$ for $i=1,2,3,4,5$, hence
\begin{align*}\Theta^q_{\g,\la}(Y'_1)=\left[X'_0X'_1B^{l_1}_1B^{l_2}_2B^{l_3}_3B^{l_4}_4B^{l_5}_5Z'\right]&=q^\alpha \left[X'_0X'_1\right]\left[B^{l_1}_1B^{l_2}_2B^{l_3}_3B^{l_4}_4B^{l_5}_5Z'\right]\\
                    &=q^{\alpha-1}X'_0X'_1\left[B^{l_1}_1B^{l_2}_2B^{l_3}_3B^{l_4}_4B^{l_5}_5Z'\right].
\end{align*}
Using the definition of $\Psi^q_{\la\la',v}$ in case 1, together with Lemma~\ref{monom} and Lemma \ref{quantumorder}, we obtain
\begin{align*}
\Psi^q_{\la\la',v}\circ\Theta^q_{\g,\la'}(Y'_1)&=\Psi^q_{\la\la',v}(q^{\alpha-1} X'_0X'_1\left[B^{l_1}_1B^{l_2}_2B^{l_3}_3B^{l_4}_4B^{l_5}_5Z'\right])\\ &=q^{\alpha-1}X^{-1}_0qX_0X_1\left[A^{l_1}_1A^{l_2}_2A^{l_3}_3A^{l_4}_4A^{l_5}_5Z\right]\\
&=q^\alpha X_1\left[A^{l_1}_1A^{l_2}_2A^{l_3}_3A^{l_4}_4A^{l_5}_5Z\right]\\
                                             &=\Theta^q_{\g,\la}(Y_1).
\end{align*}
A similar argument works for the other generators $Y'_2$, \ldots, $Y'_n$.

The case (3) is similar to (2), where $\mu_1$ corresponds to a rectangle which doesn't cross $Q$ vertically but crosses it horizontally $l_6$ times. The argument is the same replacing $B_5$, $l_5$ and $\Psi^q_{\la\la',v}$ with $B_6$, $l_6$ and $\Psi^q_{\la\la',h}$ respectively.

If some of the edges of $Q$ or $Q_\g$ are identified, the same method works using the corresponding formulae for $\Phi^q_{\la\la'}$, $\Psi^q_{\la\la',v}$ and $\Psi^q_{\la\la',h}$. One also needs to take into account that, in this case, an edge $\mu_i$ of $\la_\g$ may correspond to a rectangle in $S\smallsetminus\g$ with both ends along edges of $Q$. The formulae for $\Theta^q_{\g,\la}(Y_i)$ and $\Theta^q_{\g,\la}(Y'_i)$ have to be changed accordingly.
\end{proof}

The maps $\Psi^q_{\la\la',v}$ and $\Psi^q_{\la\la',h}$ can be interpreted as the ``limit'' of the coordinate change $\Phi^q_{\la\la'}$ when the length of $\g$ approaches 0, depending on whether $\g$ crosses $Q$ vertically or horizontally. This is made clearer by the following lemma.

\begin{lem}
\label{limitchangecoord}
Let $\rho_t$ be a continuous family of irreducible representations of $\T^q_\la(S)$ classified by a continuous family $m_t\in\T(S)$ such that, as $t\to 0$, $m_t$ approaches $m_\g\in\T(S_\g)$ in $\overline{\T(S)}$. Then, if $\la'$ differs from $\la$ by a diagonal exchange as in Figure~\ref{diagexch} and $\g$ crosses $Q$ vertically (i=v) or horizontally (i=h), we have
\[\rho_{t,\la}\circ\Phi^q_{\la\la'}\sim\rho_{t,\la}\circ\Psi^q_{\la\la',i}\ as\ t\rightarrow 0.\]
\end{lem}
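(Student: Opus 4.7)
The plan is to exploit that both $\Phi^q_{\la\la'}$ and $\Psi^q_{\la\la',i}$ are algebra homomorphisms, so the two compositions are determined by their values on the generators $X'_0,X'_1,\ldots,X'_n$ of $\T^q_{\la'}(S)$. Inspection of the defining formulas shows that both maps send $X'_0$ to $X^{-1}_0$ and $X'_j$ to $X_j$ for $j\geq 5$, so only $X'_1,X'_2,X'_3,X'_4$ need to be examined. In the embedded case with $i=v$, the ratios $\Phi^q_{\la\la'}(X'_j)\cdot\Psi^q_{\la\la',v}(X'_j)^{-1}$ reduce to $1+q^{-1}X^{-1}_0$ for $j\in\{1,3\}$ and to $(1+qX^{-1}_0)^{-1}$ for $j\in\{2,4\}$. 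Since $\rho_{t,\la}(X^{-1}_0)=x_0(t)^{-1/N}A^{-1}_0$ in the continuous family from Section~\ref{convrep}, both ratios will tend to $\Id_V$ in $\End(V)$ as soon as we establish the geometric claim that $x_0(t)\to\infty$ as $t\to 0$.

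The key step is therefore to prove $x_0(t)\to\infty$ in the vertical case (and, dually, $x_0(t)\to 0$ in the horizontal case). I would derive this directly from the exponential graph length of $\g$ and Lemma~\ref{limitlength}. When $\g$ crosses $Q$ vertically, each such crossing runs through $\la_1,\la_0,\la_3$, and in $\la'$ the same crossings run through $\la_1,\la'_0,\la_3$ with identical multiplicities. Writing $P(t)$ for the contribution of the edges of $\la=\la'$ outside $Q$ (the same in both triangulations) and $k\geq 1$ for the number of vertical crossings, Lemma~\ref{limitlength} applied in $\la$ and $\la'$ gives
\[
x_0(t)^k x_1(t)^k x_3(t)^k P(t)\xrightarrow[t\to 0]{}1\quad\text{and}\quad x'_0(t)^k x'_1(t)^k x'_3(t)^k P(t)\xrightarrow[t\to 0]{}1.
\]
Substituting the classical flip formulas $x'_0=1/x_0$, $x'_1=x_1(1+x_0)$, $x'_3=x_3(1+x_0)$ and dividing the two relations collapses $x_1$, $x_3$ and $P$, leaving $(x_0(t)/(1+x_0(t)))^{2k}\to 1$. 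Since $x_0>0$ this forces $x_0(t)\to\infty$. The horizontal case is strictly parallel: $\g$ crosses $\la_2,\la_0,\la_4$ instead, the relevant flip factors $x'_2=x_2 x_0/(1+x_0)$ and $x'_4=x_4 x_0/(1+x_0)$ push the ratio in the opposite direction, and the analogous computation yields $(1+x_0(t))^{2k}\to 1$ and hence $x_0(t)\to 0$.

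Given this geometric fact, the vertical case is complete: $\rho_{t,\la}(X^{-1}_0)\to 0$ in $\End(V)$, so both $\rho_{t,\la}(1+q^{-1}X^{-1}_0)$ and $\rho_{t,\la}((1+qX^{-1}_0)^{-1})$ tend to $\Id_V$. The ratios on each of $X'_1,X'_2,X'_3,X'_4$ thus converge to $\Id_V$, and combined with the fact that both maps agree on the remaining generators, this gives the desired asymptotic equivalence $\rho_{t,\la}\circ\Phi^q_{\la\la'}\sim\rho_{t,\la}\circ\Psi^q_{\la\la',v}$. The horizontal case follows by the symmetric argument, using $x_0(t)\to 0$ instead.

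The main obstacle is the bookkeeping required for the non-embedded configurations of $Q$ enumerated after Proposition~\ref{inducedmap}, including the once-punctured torus. In each such configuration the quantum flip formulas and the definition of $\Psi^q_{\la\la',i}$ take different shapes and involve different powers of $X_0$, so one must redo the graph-length ratio calculation to verify that $x_0(t)$ again tends to $\infty$ or $0$ with the exponent dictated by the identifications of edges. This is a routine finite check rather than a new conceptual step.
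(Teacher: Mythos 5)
Your treatment of the asymptotic comparison itself --- reducing to the generators $X'_1,\ldots,X'_4$ and checking that the ratios $1+q^{-1}X_0^{-1}$ (for $j=1,3$) and $(1+qX_0^{-1})^{-1}$ (for $j=2,4$) are sent by $\rho_{t,\la}$ to operators converging to $\Id_V$ --- is exactly the computation in the paper's proof. Where you genuinely add something is the geometric input $x_0(t)\to\infty$ (resp.\ $x_0(t)\to 0$): the paper simply asserts this, whereas you derive it by applying Lemma~\ref{limitlength} to the exponential graph length of $\g$ computed in both $\la$ and $\la'$ and cancelling via the classical flip formulas. That derivation is correct and is a welcome, self-contained justification built from material already in the paper. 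Two points to tighten. First, your bookkeeping assumes every passage of $\g$ through $Q$ is a vertical crossing, but $\g$ may in addition cut corners of $Q$ (entering and leaving through adjacent sides); such passages contribute to $c_0$ and to $c'_0$ with \emph{different} multiplicities, since the corners adjacent to the old diagonal are not those adjacent to the new one. If you carry these extra factors through the flip formulas they do cancel, and the ratio $x'_\g/x_\g$ still collapses to $(1+x_0(t)^{-1})^{2k}$ with $k\geq 1$ the number of vertical crossings, so the conclusion survives --- but this case needs to be acknowledged, as do the non-embedded configurations of $Q$, which you correctly flag as a finite check. Second, Lemma~\ref{limitlength} is stated for a single simple closed curve, so for a multicurve $\g$ you should apply it to one component that actually crosses $Q$ vertically (no component can cross both vertically and horizontally, since the components are disjoint and embedded).
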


\begin{proof}
We suppose that $(x_1(t),\ldots,x_n(t))$ are the shear parameters associated to $m_t$ for the triangulation $\la$. Then, as in section~\ref{convrep}, there are matrices $A_1$, \ldots, $A_n$ such that $\rho(X_i)=\sqrt[N]{x_i(t)}A_i$ for every $t$.

For simplicity, we consider the case when $Q$ is embedded in $S$. The computations are similar in the non-embedded cases. If $\g$ crosses $Q$ vertically then $x_0(t)\rightarrow\infty$ as $t\rightarrow 0$ and, considering for example the generator $X_1$, we have
\begin{align*}
\rho_{t,\la}\circ\Phi^q_{\la\la'}(X_1)&=(I+q\sqrt[N]{x_0(t)}A_0)\sqrt[N]{x_1(t)}A_1\\
                                      &\sim q\sqrt[N]{x_0(t)}\sqrt[N]{x_1(t)}A_0A_1=\rho_{t,\la}\circ\Psi^q_{\la\la',v}(X_1),
\end{align*}
and similarly for the other generators.

If $\g$ crosses $Q$ horizontally then $x_0(t)\rightarrow 0$ as $t\to 0$ and, considering for example the generator $X_2$, we have
\begin{align*}
\rho_{t,\la}\circ\Phi^q_{\la\la'}(X_2)&=(I+q(\sqrt[N]{x_0(t)}A_0)^{-1})^{-1}\sqrt[N]{x_2(t)}A_2\\
                                      &\sim q^{-1}\sqrt[N]{x_0(t)}\sqrt[N]{x_2(t)}A_0A_2=\rho_{t,\la}\circ\Psi^q_{\la\la',h}(X_2),
\end{align*}
and similarly for the other generators.
\end{proof}

\begin{prop}
\label{changetriang}
Suppose that $\rho_{t,\la}\colon\T^q_\la(S)\to\End(V)$ and $\rho_{t,\la'}\colon\T^q_{\la'}(S)\to\End(V)$ are two continuous families of irreducible representations classified by the same weights and by a continuous family $m_t\in\T(S)$ such that, as $t\rightarrow 0$, $m_t$ approaches $m_\g\in\T(S_\g)$ in $\overline{\T(S)}$. By Theorem~\ref{cheforep} and using the same notations, we have
\[
\lim_{t\to 0}\rho_{t,\la}\circ\Theta^q_{\g,\la}=\bigoplus_\mathbf{\ \,p_\g}\rho^\mathbf{p_\g}_{\g,\la}\ \ \text{and}\ \  \lim_{t\to 0}\rho_{t,\la'}\circ\Theta^q_{\g,\la'}=\bigoplus_\mathbf{\ \,p_\g}\rho^\mathbf{p_\g}_{\g,\la'}.
\]
Suppose in addition that, for all $t$, we have $\rho_{t,\la}\circ\Phi^q_{\la\la'}=\rho_{t,\la'}$. Then
\[\rho^\mathbf{p_\g}_{\g,\la'}=\rho^\mathbf{p_\g}_{\g,\la}\circ\Phi^q_{\la_\g\la'_\g}\ \text{for all compatible }\mathbf{p_\g}.
\]
\end{prop}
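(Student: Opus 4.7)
The plan is to reduce the problem to the case when $\la$ and $\la'$ differ by a single diagonal exchange, using the composition relation $\Phi^q_{\la_\g\la'_\g}=\Phi^q_{\la_{0,\g}\la_{1,\g}}\circ\cdots\circ\Phi^q_{\la_{k-1,\g}\la_{k,\g}}$ along any sequence of triangulations of $S$ connecting $\la$ to $\la'$ by diagonal exchanges (and re-indexings, which are trivial). Once this reduction is made, I branch according to the three cases of Proposition~\ref{inducedmap} and use Theorem~\ref{cheforep} on each side to identify the $t\to 0$ limits.

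In Case (1), where $\g$ does not cross the square $Q$ vertically or horizontally, the identity $\Phi^q_{\la\la'}\circ\Theta^q_{\g,\la'}=\Theta^q_{\g,\la}\circ\Phi^q_{\la_\g\la'_\g}$ of Proposition~\ref{inducedmap} composed on the left with $\rho_{t,\la}$ and simplified by the representation hypothesis $\rho_{t,\la}\circ\Phi^q_{\la\la'}=\rho_{t,\la'}$ yields
\[\rho_{t,\la'}\circ\Theta^q_{\g,\la'}=\rho_{t,\la}\circ\Theta^q_{\g,\la}\circ\Phi^q_{\la_\g\la'_\g}\]
for every $t>0$. Letting $t\to 0$ and applying Theorem~\ref{cheforep} to each side produces the full direct-sum identity $\bigoplus_\mathbf{p_\g}\rho^\mathbf{p_\g}_{\g,\la'}=\bigl(\bigoplus_\mathbf{p_\g}\rho^\mathbf{p_\g}_{\g,\la}\bigr)\circ\Phi^q_{\la_\g\la'_\g}$.

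In Cases (2) and (3), where $\g$ crosses $Q$ vertically or horizontally, the induced triangulations satisfy $\la_\g=\la'_\g$, so $\Phi^q_{\la_\g\la'_\g}=\Id$ and the desired identity collapses to $\rho^\mathbf{p_\g}_{\g,\la'}=\rho^\mathbf{p_\g}_{\g,\la}$. Here I combine the exact identity $\Psi^q_{\la\la',v/h}\circ\Theta^q_{\g,\la'}=\Theta^q_{\g,\la}$ of Proposition~\ref{inducedmap} with the asymptotic identity $\rho_{t,\la}\circ\Psi^q_{\la\la',v/h}\sim\rho_{t,\la}\circ\Phi^q_{\la\la'}=\rho_{t,\la'}$ of Lemma~\ref{limitchangecoord}, which lets me rewrite $\lim_{t\to0}\rho_{t,\la}\circ\Theta^q_{\g,\la}=\lim_{t\to0}\rho_{t,\la'}\circ\Theta^q_{\g,\la'}$ and conclude via Theorem~\ref{cheforep}.

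The main obstacle is promoting each direct-sum identity to the summand-by-summand identity being claimed. I intend to argue this by noting that the subspace $V_\mathbf{p_\g}\subset V$ is intrinsically the simultaneous eigenspace, with eigenvalues prescribed by $\mathbf{p_\g}$, of the operators associated by the representation to the central elements $P_j^\g,P'_i,P''_i\in\T^q_{\la'_\g}(S_\g)$ of Lemma~\ref{punctures}; because these central elements are topological invariants of the punctures of $S_\g$ and are respected by the coordinate change $\Phi^q_{\la_\g\la'_\g}$, the induced eigenspace decompositions of $V$ agree on the two sides of the identity, and the irreducible components must then coincide. A secondary technical point arising in Case~(1) is the interchange of the $t\to 0$ limit with composition by $\Phi^q_{\la_\g\la'_\g}$, which is valued in a fraction algebra; this reduces to checking that denominators in $\Phi^q_{\la_\g\la'_\g}$ applied to Laurent polynomials remain invertible in the limit, which follows from the fact that $\{\rho^\mathbf{p_\g}_{\g,\la}\}_\la$ assembles into a well-defined irreducible representation of $\T^q(S_\g)$ at $m_\g$.
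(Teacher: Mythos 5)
Your proposal is correct and follows essentially the same route as the paper: reduce to a single diagonal exchange, split into the three cases of Proposition~\ref{inducedmap}, use the exact identity in case (1) and the asymptotic identity of Lemma~\ref{limitchangecoord} in cases (2)--(3), and then refine the direct-sum identity to a summand-by-summand one via the eigenspace decomposition for the central elements $P'_\g$, $P''_\g$, which is exactly the paper's argument (the paper phrases it as $\Phi^q_{\la_\g\la'_\g}(P'_\g)=P_\g$). The only cosmetic difference is that the paper first reduces to a single curve $\g$ by induction on $k$ using Lemma~\ref{composition}, whereas you work with the full tuple of compatible weights directly.
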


\begin{proof} We show the result for the case when $\g=\g_1$ is a simple closed curve. The general case follows by induction on $k$ using Lemma~\ref{composition}.

By Lemma~\ref{criterion}, it suffices to show the result for a diagonal exchange in a square $Q$ as in Figure~\ref{diagexch}. We let $P_\g$ be the quantum ordered product of generators associated to the edges of $\la_\g$ ending at one of the new punctures of $S_\g$. If $\la'_\g$ is different from $\la_\g$, we define similarly $P'_\g$. Following the proof of Theorem~\ref{cheforep}, we have that $\rho_{t,\la}\circ\Theta^q_{\g,\la}=\bigoplus_i\rho^i_{t,\la}$ where $\rho^i_{t,\la}$ are representations of $\T^q_{\la_\g}(S_\g)$ onto $V_i$, the eigenspace of $\rho_{t,\la}\circ\Theta^q_{\g,\la}(P_\g)$ with eigenvalue $q^i$. Then, by definition, $\rho^i_{\g,\la}$ is the limit as $t$ approaches 0 of $\rho^i_{t,\la}$.

We have 
\[\rho_{t,\la}\circ\Theta^q_{\g,\la}\circ\Phi^q_{\la_\g\la'_\g}\rightarrow(\bigoplus_i\rho^i_{\g,\la})\circ\Phi^q_{\la_\g\la'_\g}\]
and by hypothesis
\[\rho_{t,\la}\circ\Phi^q_{\la\la'}\circ\Theta^q_{\g,\la'}=\rho_{t,\la'}\circ\Theta^q_{\g,\la'}\rightarrow\bigoplus_i\rho^i_{\g,\la'}.\]

If $\g$ doesn't cross $Q$ vertically or horizontally, then $\Phi^q_{\la\la'}\circ\Theta^q_{\g,\la'}=\Theta^q_{\g,\la}\circ\Phi^q_{\la_\g\la'_\g}$ by Proposition~\ref{inducedmap}. Composing on both sides of this equation by $\rho_{t,\la}$ on the left and taking the limit as $t$ approaches 0, we get $\bigoplus_i\rho^i_{\g,\la'}=(\bigoplus_i\rho^i_{\g,\la})\circ\Phi^q_{\la_\g\la'_\g}$. Note that $\Phi^q_{\la_\g\la'_\g}(P'_\g)=P_\g$ so it sends the eigenspaces of $P'_\g$ onto those of $P_\g$ with the same eigenvalues. Hence $\rho^i_{\g,\la'}=\rho^i_{\g,\la}\circ\Phi^q_{\la_\g\la'_\g}$ for every $i$.

If $\g$ crosses $Q$, say vertically, then $\Phi^q_{\la_\g\la'_\g}$ is the identity. By Proposition~\ref{inducedmap}, $\Psi^q_{\la\la',v}\circ\Theta^q_{\g,\la'}=\Theta^q_{\g,\la}$ and, by Lemma~\ref{limitchangecoord}, $\rho_{t,\la}\circ\Phi^q_{\la\la'}\sim\rho_{t,\la}\circ\Psi^q_{\la\la',v}$. Composing on both sides of this equivalence by $\Theta^q_{\g,\la'}$ on the right and taking the limit as $t$ approaches 0 we get that $\bigoplus_i\rho^i_{\g,\la'}=\bigoplus_i\rho^i_{\g,\la}$. The decomposition $V=\bigoplus V_i$ is the same on each side, given by the eigenspaces of $P_\g=P_\g'$, hence $\rho^i_{\g,\la'}=\rho^i_{\g,\la}$ for all $i$ in this case.
\end{proof}

In the notations of Proposition~\ref{changetriang}, we see that $\rho^{\mathbf{p}_\g}_{\g,\la}$ differs from $\rho^{\mathbf{p}_\g}_{\g,\la'}$ only if $\la_\g$ is different from $\la'_\g$. Hence we can rename these representations $\rho^{\mathbf{p}_\g}_{\g,\la_\g}$.

If $\Omega$ is a subset of the set $\Lambda(S)$ of ideal triangulations of $S$ and if $\left\{\rho_\mu\right\}_{\mu\in\Omega}$ is a family of compatible representations of the Chekhov-Fock algebras $\T^q_\mu(S)$, we can \emph{extend} this family to a represention $\rho=\left\{\rho_\la\right\}_{\la\in\Lambda(S)}$ of $\T^q(S)$ by setting $\rho_\la=\rho_\mu\circ\Phi^q_{\mu\la}$ for any $\la\in\Lambda(S)$, for some fixed $\mu\in\Omega$. The composition rule for the coordinate change isomorphisms implies that this definition doesn't depend on $\mu$. Of particular interest here is the subset $\Lambda_\g=\left\{\lambda_\g\text{ induced by }\la\ |\ \la\in\Lambda(S)\right\}$ of the set $\Lambda(S_\g)$ of ideal triangulations of $S_\g$. We believe at this point the two sets coincide, but assume for now that they may be distinct.

By Proposition~\ref{changetriang}, if $\rho_t=\left\{\rho_{t,\la}\right\}_\la$ is such a continuous family of representations of $\T^q(S)$, the limiting irreducible factors $\rho^\mathbf{p_\g}_{\g,\la_\g}$ for each $\rho_{t,\la}$, as given by Theorem~\ref{cheforep}, satisfy the compatibility relations  $\rho^\mathbf{p_\g}_{\g,\la'_\g}=\rho^\mathbf{p_\g}_{\g,\la_\g}\circ\Phi^q_{\la_\g\la'_\g}$. Hence, taken together, they can be extended to form irreducible representations of $\T^q(S_\g)$, proving the following theorem.

\begin{thm}
Let $\rho_t=\left\{\rho_{t,\la}\right\}_\la$ be a continuous family of irreducible representations of $\T^q(S)$ classified by weights $\mathbf{p}\in\left\{0,\ldots,N-1\right\}^s$ and a continuous family of metrics $m_t\in\T(S)$ such that $m_t$ approaches $m_\g\in\T(S_\g)$ in $\overline{\T(S)}$ as $t\rightarrow 0$. For each triangulation $\la$, we let the limit
\[
\lim_{t\to 0}\rho_{t,\la}\circ\Theta^q_{\g,\la}=\bigoplus_\mathbf{\ p_\g}\rho^{\mathbf{p_\g}}_{\g,\la_\g}
\]
 be given as in Theorem~\ref{cheforep}.

Then, for every compatible weights $\mathbf{p_\g}$ on $S_\g$, the family of representations $\{\rho^\mathbf{p_\g}_{\g,\la_\g}\}_{\la_\g\in\Lambda_\g}$ extends to an irreducible representation  $\rho^\mathbf{p_\g}_\g$ of $\T^q(S_\g)$ classified by the metric $m_\g$ and the weights $\mathbf{p_\g}$.
\end{thm}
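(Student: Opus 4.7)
The plan is to invoke the extension principle stated just before the theorem: given a family $\{\rho_\mu\}_{\mu\in\Omega}$ of compatible representations of Chekhov--Fock algebras indexed by some $\Omega\subseteq\Lambda(S_\g)$, fixing $\mu_0\in\Omega$ and setting $\rho_\nu=\rho_{\mu_0}\circ\Phi^q_{\mu_0\nu}$ for any $\nu\in\Lambda(S_\g)$ produces a family satisfying the required compatibility $\rho_\nu\circ\Phi^q_{\nu\nu'}=\rho_{\nu'}$, by virtue of the cocycle relation $\Phi^q_{\mu_0\nu}\circ\Phi^q_{\nu\nu'}=\Phi^q_{\mu_0\nu'}$. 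The fact that each such composition \emph{makes sense} in the technical sense required to define a representation of $\T^q(S_\g)$ is precisely the content of the general result, cited earlier in the paper, that the composition of an irreducible representation $\rho_{\mu_0}$ with any coordinate change isomorphism makes sense and yields an irreducible representation.

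Concretely, I would fix a triangulation $\la^0$ of $S$, let $\mu_0=\la^0_\g\in\Lambda_\g$, and define
\[\rho^{\mathbf{p_\g}}_{\g,\nu}:=\rho^{\mathbf{p_\g}}_{\g,\la^0_\g}\circ\Phi^q_{\la^0_\g\nu}\quad\text{for every }\nu\in\Lambda(S_\g).\]
The only substantive point to verify is well-definedness: for a second choice $\la^1$ with $\la^1_\g\in\Lambda_\g$, one needs
\[\rho^{\mathbf{p_\g}}_{\g,\la^1_\g}\circ\Phi^q_{\la^1_\g\nu}=\rho^{\mathbf{p_\g}}_{\g,\la^0_\g}\circ\Phi^q_{\la^0_\g\nu}.\]
Composing with the cocycle relation, this reduces to the identity
\[\rho^{\mathbf{p_\g}}_{\g,\la^1_\g}=\rho^{\mathbf{p_\g}}_{\g,\la^0_\g}\circ\Phi^q_{\la^0_\g\la^1_\g},\]
which is precisely Proposition~\ref{changetriang} applied to the pair $(\la^0,\la^1)$. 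Consequently the extension is unambiguous and agrees on $\Lambda_\g$ with the original family produced by Theorem~\ref{cheforep}.

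It remains to check irreducibility and the correct classification. Since $\rho^{\mathbf{p_\g}}_{\g,\la^0_\g}$ is irreducible and classified by $m_\g$ and $\mathbf{p_\g}$ by Theorem~\ref{cheforep}, the earlier observation in the paper that composition with $\Phi^q$ preserves both irreducibility and the classifying data (expressed in the appropriate shear coordinates) shows that $\rho^{\mathbf{p_\g}}_\g=\{\rho^{\mathbf{p_\g}}_{\g,\nu}\}_\nu$ is an irreducible representation of $\T^q(S_\g)$ with the stated classification. The only potential obstacle is the ``makes sense'' issue for the compositions used in the extension; however this is resolved uniformly by the general structure theory for irreducible representations of Chekhov--Fock algebras at odd roots of unity, and no genuinely new geometric or algebraic input is required beyond Proposition~\ref{changetriang} and the machinery already developed in the preceding sections.
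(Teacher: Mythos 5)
Your proposal is correct and follows essentially the same route as the paper: the paper likewise defines the extension by fixing a base triangulation in $\Lambda_\g$ and composing with the coordinate change isomorphisms, uses the cocycle relation for $\Phi^q$ to see that the extension is independent of the choice, and invokes Proposition~\ref{changetriang} as the key compatibility input, with irreducibility and the classification by $m_\g$ and $\mathbf{p_\g}$ supplied by the general results on composing irreducible representations with coordinate changes quoted from \cite{BoLiu}.
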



\end{document}